\theoremstyle{plain}\newtheorem{Theorem}{Theorem}[section]
\theoremstyle{plain}
\theoremstyle{plain}\newtheorem{Corollary}[Theorem]{Corollary}
\theoremstyle{plain}\newtheorem{Lemma}[Theorem]{Lemma}
\theoremstyle{plain}\newtheorem{Proposition}[Theorem]{Proposition}
\theoremstyle{definition}
\theoremstyle{definition}\newtheorem{Example}[Theorem]{Example}
\theoremstyle{definition}
\theoremstyle{definition}\newtheorem{Remark}[Theorem]{Remark}
\theoremstyle{definition}
\theoremstyle{plain}\newtheorem{Statement}[Theorem]{}
\def\F{{\mathbb F}}
\def\Fp{{\mathbb F_p}}
\def\Aut{\mathrm{Aut}}           \def\tenk{\otimes_k}     
             \def\ten{\otimes} 
\def\dim{\mathrm{dim}}           
\def\Der{\mathrm{Der}}       
\def\End{\mathrm{End}}           
\def\Ext{\mathrm{Ext}}
\def\GL{\mathrm{GL}} \def\gl{\mathfrak{gl}}
\def\H{\mathrm H}
\def\HH{\mathrm{HH}}
\def\Hom{\mathrm{Hom}}           
\def\ker{\mathrm{ker}}           
\def\IDer{\mathrm{IDer}}
\def\Im{\mathrm{Im}}
\def\ll{\ell\!\ell}
\def\op{\mathrm{op}}
\def\Tr{\mathrm{Tr}}
\title{On the solvability of the Lie algebra  $\HH^1(B)$ for blocks of finite groups} 
\author{Markus Linckelmann, Jialin Wang} 
\address{Markus Linckelmann \\
School of Science \& Technology \\
Department of Mathematics \\
City St George's, University of London \\
Northampton Square \\
London EC1V 0HB \\
United Kingdom}
\email{markus.linckelmann.1@city.ac.uk}
\address{Jialin Wang \\
School of Science \& Technology \\
Department of Mathematics \\
City St George's, University of London \\
Northampton Square \\
London EC1V 0HB \\
United Kingdom}
\email{jialin.wang@city.ac.uk}
\begin{document}

\begin{abstract}
We give some criteria for the Lie algebra $\HH^1(B)$ to be solvable, where
$B$ is a $p$-block of a finite group algebra, in terms of the action of an
inertial quotient of $B$ on a defect group of $B$. 
\end{abstract}

\maketitle

\section{Introduction}

The Lie algebra structure of the first Hochschild cohomology of a block of a 
finite group algebra   sits at the crossroads of the representation theory of a block
as a part of the wider theory of representations of finite-dimensional algebras
and the  fusion systems and their invariants that can be associated  with block
algebras.  This Lie algebra is therefore one of the ingredients that has the potential
to feed into an understanding of the connections between the global and
local structure of block algebras. 
The purpose of the present paper is to contribute to  investigating this connection.

Let  $p$ be a prime number and $k$ a field of characteristic $p$. 
A block of a finite group algebra $kG$ is an indecomposable direct factor $B$
of $kG$ as an algebra. A defect group of a block $B$ of $kG$ is a maximal
$p$-subgroup $P$ of $G$ such that $kP$ is isomorphic to a direct summand of
$B$ as an $kP$-$kP$-bimodule. The results in this paper are a contribution to the
broader theme investigating 
connections between Hochschild cohomology and fusion systems of blocks.
More precisely, the main results  of this paper relate 
the Lie algebra structure of $\HH^1(B)$, notably the solvability of this Lie algebra,
to the action of an inertial quotient $E$ on a  defect group of the block.

For $P$ a finite $p$-group  we denote by $\Phi(P)$ the Frattini subgroup of $P$;
this is the smallest normal subgoup of $P$ such that $P/\Phi(P)$ is elementary 
abelian.  If $E$ is a finite group acting on $P$, then this action induces an action 
of $E$ on $P/\Phi(P)$. In this way we can regard $P/\Phi(P)$ as an $\Fp E$-module. 
If in addition $E$ has order prime to $p$, then $P/\Phi(P)$ is a semisimple 
$\Fp E$-module. The following results have in common that the property of this 
module being multiplicity free is the key ingredient for the first Hochschild 
cohomology to be solvable as a  Lie algebra.

\begin{Theorem} \label{frobenius-block-thm}
Let $G$ be a finite group, and assume that $k$ is large enough for the 
subgroups of  $G$. 
Let $B$ be a block of $kG$ with a non-trivial abelian defect group $P$ and
a non-trivial inertial quotient $E$ acting freely on $P\setminus \{1\}$.
If the  $\Fp E$-module $P/\Phi(P)$ is multiplicity free, then
the Lie algebra $\HH^1(B)$ is solvable. The converse holds if $p$ is odd.
\end{Theorem}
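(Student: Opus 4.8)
The plan is to identify $\HH^1(B)$, as a Lie algebra, with the $E$-fixed points of the derivation algebra of $kP$, and then to run a grading argument on the latter.

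First I would use the structure theory of blocks with abelian defect group whose inertial quotient acts freely on the defect group: such blocks are inertial, so $B$ is Morita equivalent --- in fact source-algebra equivalent --- to the twisted group algebra $k_\theta(P\rtimes E)$ for some $\theta\in\H^2(E;k^\times)$. Since Morita equivalences preserve the Gerstenhaber structure on Hochschild cohomology, this gives an isomorphism of Lie algebras $\HH^1(B)\cong\HH^1(k_\theta(P\rtimes E))$. Viewing $k_\theta(P\rtimes E)$ as a crossed product of $kP$ by the semisimple algebra $k_\theta E$ (semisimple because $|E|$ is prime to $p$), the standard decomposition of the Hochschild cohomology of such a crossed product reads $\HH^1(k_\theta(P\rtimes E))\cong\bigoplus_{[e]}\HH^1(kP,(kP)_e)^{C_E(e)}$, the sum being over the $E$-conjugacy classes of $E$, where $(kP)_e$ is $kP$ with its right structure twisted by the automorphism $e$. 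For $e\neq 1$ freeness forces $e$ to have no non-trivial fixed point on $P$, so $1-e$ is invertible on $P$; since $P\times P$ is abelian, a Mackey computation identifies $(kP)_e$, as a bimodule, with a module induced from a free module over a diagonal copy of $kP$, whence $\HH^{>0}(kP,(kP)_e)=0$. Only $e=1$ contributes, and as $kP$ is commutative this gives $\HH^1(B)\cong\Der(kP)^E$, with $E$ acting through its action on $P$.

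Next I would analyse $\Der(kP)^E$ through the $\mathfrak{m}$-adic filtration, where $\mathfrak{m}$ is the augmentation ideal of $kP$; note $\mathfrak m/\mathfrak m^2\cong V:=P/\Phi(P)$ as $\Fp E$-modules via $g\mapsto g-1$, and that $V$ is multiplicity free over $\Fp E$ if and only if $V_k:=V\otimes_{\Fp}k$ is multiplicity free over $kE$ (as $|E|$ is prime to $p$ and $k$ is a splitting field). The subspace $\CD_0:=\{D\in\Der(kP):D(\mathfrak m)\subseteq\mathfrak m\}$ is an $E$-stable subalgebra with $E$-stable ideal $\CD_1:=\{D:D(\mathfrak m)\subseteq\mathfrak m^2\}$, and $D\mapsto D|_{\mathfrak m/\mathfrak m^2}$ induces an $E$-equivariant isomorphism of Lie algebras $\CD_0/\CD_1\cong\gl(V_k)$. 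As $(-)^E$ is exact, $\CD_0^E/\CD_1^E\cong\gl(V_k)^E=\End_{kE}(V_k)$. If $V$ is not multiplicity free then $\End_{kE}(V_k)$ contains some matrix algebra $\gl_m(k)$ with $m\geq2$, hence a copy of $\mathfrak{sl}_2(k)$, which is simple for $p$ odd; so the subalgebra $\CD_0^E$ of $\HH^1(B)$, and hence $\HH^1(B)$ itself, is not solvable. This settles the converse for $p$ odd --- and explains its failure for $p=2$, where $\gl_2(\overline{\F}_2)$ is solvable. For the forward implication I would pass to the associated graded: since $kP$ is a truncated polynomial algebra, $\mathrm{gr}(kP)\cong kP$ as graded algebras, now carrying the homogeneous $E$-action determined by the action on $V_k$ in degree $1$, and the canonical map $\mathrm{gr}(\Der(kP))\to\Der(\mathrm{gr}(kP))=\bigoplus_{j\geq-1}\Der_j$ is an isomorphism of graded Lie algebras, with $\Der_{-1}\cong V_k^{*}$ and $\Der_0\cong\gl(V_k)$. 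Since $(-)^E$ commutes with $\mathrm{gr}$ here, $\mathrm{gr}(\Der(kP)^E)\cong\bigoplus_{j\geq-1}\Der_j^E$. Assume $V$ is multiplicity free. Then $\Der_0^E=\End_{kE}(V_k)$ is a product of copies of $k$, hence abelian; and $V$ has no trivial summand, for if the $\Phi(P)$-image of some $v\in P$ were $E$-fixed then $E$ would act affinely on the coset $v\Phi(P)$ with cocycle in $\Phi(P)$, and $\H^1(E;\Phi(P))=0$ would produce a non-identity $E$-fixed element of $P$, contradicting freeness and $E\neq 1$; so $\Der_{-1}^E=(V_k^{*})^E=0$. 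Thus $\mathrm{gr}(\Der(kP)^E)=\Der_0^E\oplus\bigoplus_{j\geq1}\Der_j^E$ is an extension of the abelian algebra $\Der_0^E$ by the nilpotent ideal $\bigoplus_{j\geq1}\Der_j^E$ (nilpotent since the grading is bounded), hence solvable. A filtered Lie algebra whose associated graded is solvable is solvable, so $\HH^1(B)\cong\Der(kP)^E$ is solvable.

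The main obstacle is the opening reduction: the identification $\HH^1(B)\cong\HH^1(k_\theta(P\rtimes E))$ rests on the external structural input that a block with abelian defect group and inertial quotient acting freely on the defect group is inertial (equivalently, is basic Morita, or at least derived, equivalent to its Brauer correspondent). After that the argument is essentially elementary: the off-diagonal Hochschild terms vanish by a short Mackey computation, and the remaining content is that $\Der(kP)$ is a Witt-type Lie algebra whose ring of $E$-invariants is solvable precisely when its degree-zero part $\gl(V_k)^E$ collapses to a torus --- that is, when $V$ is multiplicity free --- and whose degree $(-1)$ part is annihilated by the free action of $E$.
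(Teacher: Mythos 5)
Your opening reduction is not available as stated, and this is the one genuine gap. A block with abelian defect group $P$ and inertial quotient $E$ acting freely on $P\setminus\{1\}$ need \emph{not} be inertial: the principal $2$-block of $A_5$ has defect group $C_2\times C_2$ and inertial quotient $C_3$ acting freely, yet it is not Morita (let alone source-algebra) equivalent to $kA_4$; for $p$ odd, the principal $7$-block of $\mathrm{PSL}_2(7)$ has cyclic defect group $C_7$ with $E=C_3$ acting freely, and its Brauer tree is a line, so it is not Morita equivalent to $k(C_7\rtimes C_3)$ either. Your fallback ``or at least derived equivalent to its Brauer correspondent'' is precisely Brou\'e's abelian defect group conjecture for these blocks, which is open in general. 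What is actually available --- and what the paper uses --- is Puig's theorem \cite[6.8]{Puabelien}, which gives a \emph{stable equivalence of Morita type} between $B$ and $k(P\rtimes E)$ (no twist is needed: $P\rtimes E$ is a Frobenius group, so $H^2(E;k^\times)$ is trivial for $k$ large enough, cf.\ Remark \ref{frobenius-2-cocycle}), combined with the transfer theorem of K\"onig--Liu--Zhou \cite[Theorem 10.7]{KLZ}, which asserts that a stable equivalence of Morita type induces an isomorphism of Lie algebras on $\HH^1$; this is not automatic, since stable equivalences do not preserve Hochschild cohomology in general. With this substitution your identification $\HH^1(B)\cong\HH^1(k(P\rtimes E))$ is justified; as written, the first step rests on a false claim.

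From that point on your argument is essentially the paper's: the centraliser decomposition kills every summand indexed by $1\neq e\in E$ because $e$ acts on $P$ without nontrivial fixed points (this is Lemma \ref{fpf-Lemma} and Lemma \ref{twisted2}(iii)), leaving $\HH^1(B)\cong\Der(kP)^E$; and your analysis of $\Der(kP)^E$ via the radical filtration --- degree-zero part $\End_{kE}(V_k)$, a torus exactly when $P/\Phi(P)$ is multiplicity free, degree $(-1)$ part vanishing because $P/\Phi(P)$ has no trivial summand (your coprime $1$-cocycle argument is correct and corresponds to $[P,E]=P$ and Proposition \ref{Pe-prop}) --- is the content of Lemmas \ref{lem3}, \ref{lem4}, \ref{lem5} and Theorem \ref{E-stable-derivations-thm}. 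One caveat: the blanket principle ``a filtered Lie algebra whose associated graded is solvable is solvable'' is false when the filtration has a nonzero piece in negative degree; for instance, filtering $\mathfrak{sl}_2$ by $F_{-1}=\mathfrak{sl}_2\supseteq F_0=\mathfrak{b}\supseteq F_1=0$ with $\mathfrak{b}$ a Borel subalgebra yields a solvable associated graded. Your use of it is legitimate only because you have already shown $(V_k^{*})^E=0$, so the induced filtration on $\Der(kP)^E$ is concentrated in degrees $\geq 0$; then $F_1\cap\Der(kP)^E$ is a nilpotent ideal with abelian quotient $\End_{kE}(V_k)$, which is exactly the direct argument of Lemma \ref{lem4}. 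State the principle in this restricted form (or argue directly), and add a word on why $\CD_0\to\gl(V_k)$ is surjective (explicit derivations on the truncated polynomial algebra, as in Lemma \ref{lem3}); with those repairs and the corrected first step, your proof is sound and follows the same route as the paper.
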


In the course of the proof we will describe more precise results on 
the Lie algebra structure of $\HH^1(B)$.
One key ingredient  is a stable equivalence of Morita type between
the block $B$ and the semidirect product $k(P\rtimes E)$, due to Puig.
Another key ingredient is the next result which investigates the
Lie algebra structure of $\HH^1(kP)^E$. We denote by $[P,E]$ the 
subgroup of $P$ generated by the set of elements of the form 
$(^eu)u^{-1}$, where $u\in P$ and $e\in E$ (this is the hyperfocal subgroup 
in $P$ of $P\rtimes E$). 

\begin{Theorem} \label{E-stable-derivations-thm}
Let $P$ be a non-trivial finite abelian $p$-group and $E$ a finite  $p'$-group 
acting on $P$.  Suppose  that $[P,E]=P$. 
\begin{itemize}
\item[{\rm (i)}]
Every $E$-stable derivation on $kP$ has image contained in the Jacobson 
radical  $J(kP)$. 
\item[{\rm (ii)}]
If the $\Fp E$-module $P/\Phi(P)$ is multiplicity free, then $\HH^1(kP)^E$ 
is a solvable Lie algebra.  The converse holds if $p$ is odd.
\end{itemize}
\end{Theorem}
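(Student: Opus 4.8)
The plan is to use that $\HH^1(kP)=\Der(kP)$ since $kP$ is commutative, and to describe the Lie algebra $\Der(kP)^E$, modulo a nilpotent ideal, as the endomorphism algebra of the semisimple $kE$-module $J(kP)/J(kP)^2\cong k\otimes_{\Fp}(P/\Phi(P))$; solvability is then read off from elementary facts about matrix Lie algebras over a field. For (i), let $D$ be an $E$-stable derivation and $\epsilon\colon kP\to k$ the augmentation. Using the Leibniz rule and $\epsilon(g)=1$ for $g\in P$, the map $g\mapsto\epsilon(D(g))$ is a group homomorphism $P\to(k,+)$; since $(k,+)$ is elementary abelian it factors through an $\Fp$-linear map $\lambda\colon P/\Phi(P)\to k$. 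As $E$ permutes $P$ and hence fixes $\epsilon$, and $D$ is $E$-stable, $\lambda$ is a homomorphism of $\Fp E$-modules with $k$ carrying the trivial action. Now $[P,E]=P$ forces $[P/\Phi(P),E]=P/\Phi(P)$, and since $P/\Phi(P)$ is semisimple over $\Fp E$ ($E$ being a $p'$-group) this gives $(P/\Phi(P))^E=0$, so $P/\Phi(P)$ has no trivial constituent and $\Hom_{\Fp E}(P/\Phi(P),k)=0$. Hence $\lambda=0$, i.e., $D(g)\in J(kP)$ for all $g\in P$, so $\Im(D)\subseteq J(kP)$.

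For (ii), write $J=J(kP)$ and for $n\geq1$ set $\mathfrak h_n=\{D\in\Der(kP) : D(J)\subseteq J^n\}$, an $E$-stable Lie subalgebra. Then $\mathfrak h_1\supseteq\mathfrak h_2\supseteq\cdots$ with $\mathfrak h_N=0$ for $N\gg0$ (as $J$ is nilpotent), and the Leibniz rule gives $[\mathfrak h_m,\mathfrak h_n]\subseteq\mathfrak h_{m+n-1}$; in particular $\mathfrak h_2$ is a nilpotent Lie algebra and an ideal of $\mathfrak h_1$. By (i), $\Der(kP)^E=\mathfrak h_1^E$; put $\mathfrak g=\HH^1(kP)^E=\mathfrak h_1^E$ and $\mathfrak g_2=\mathfrak h_2^E$, a nilpotent ideal of $\mathfrak g$. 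Sending $D\in\mathfrak h_1$ to its induced action on $J/J^2$ defines a surjective homomorphism of $E$-equivariant Lie algebras $\mathfrak h_1\to\End_k(J/J^2)$ with kernel $\mathfrak h_2$, surjective because a derivation of $kP$ may be prescribed arbitrarily on the group generators of $P$, hence with arbitrary linear part modulo $J^2$. Since $E$ is a $p'$-group, taking $E$-fixed points is exact, so this gives a Lie algebra isomorphism $\mathfrak g/\mathfrak g_2\cong\End_k(J/J^2)^E=\End_{kE}(J/J^2)$. Finally $J/J^2\cong k\otimes_{\Fp}(P/\Phi(P))$ as $kE$-modules, and base change commutes with $\Hom$ of finite-dimensional modules, so $\End_{kE}(J/J^2)\cong k\otimes_{\Fp}\End_{\Fp E}(P/\Phi(P))$; thus $\HH^1(kP)^E$ is solvable if and only if this $k$-algebra is solvable as a Lie algebra.

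To finish, decompose $P/\Phi(P)=\bigoplus_i S_i^{a_i}$ into isotypic components over $\Fp E$, so that $\End_{\Fp E}(P/\Phi(P))\cong\prod_i\mathrm{Mat}_{a_i}(D_i)$ with each $D_i=\End_{\Fp E}(S_i)$ a finite field by Wedderburn's little theorem. If $P/\Phi(P)$ is multiplicity free then all $a_i=1$, so $k\otimes_{\Fp}\End_{\Fp E}(P/\Phi(P))$ is a commutative $k$-algebra, hence abelian and solvable as a Lie algebra; this proves the forward implication, with no restriction on $p$. Conversely, if $P/\Phi(P)$ is not multiplicity free then some $a_j\geq2$, and $k\otimes_{\Fp}\mathrm{Mat}_{a_j}(D_j)\cong\mathrm{Mat}_{a_j}(k\otimes_{\Fp}D_j)$ contains $\mathrm{Mat}_{a_j}(k)$, hence the Lie subalgebra $\mathfrak{sl}_{a_j}(k)$, which for $p$ odd is perfect for every $a_j\geq2$ and therefore non-solvable; since $\HH^1(kP)^E$ has $\End_{kE}(J/J^2)$ as a quotient, it is non-solvable as well. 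I expect the one genuinely non-formal point to be this last step: the argument needs $p$ odd precisely because $\mathfrak{sl}_2(k)$ is solvable in characteristic $2$, where the converse can indeed fail; the remaining content is the reduction $\mathfrak g/\mathfrak g_2\cong\End_{kE}(J/J^2)$, whose usefulness for the converse rests on the surjectivity onto $\End_k(J/J^2)$ \emph{before} passing to $E$-fixed points.
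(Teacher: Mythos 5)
Your proof is correct and follows essentially the same route as the paper: part (i) is the argument of Proposition \ref{Pe-prop}/Corollary \ref{Pe-cor-1} (the augmentation composed with an $E$-stable derivation yields an $\Fp E$-homomorphism from $P/\Phi(P)$, which has no trivial constituents since $[P,E]=P$, to a trivial module, hence is zero), and part (ii) reproduces the reduction of $\Der(kP)^E$ modulo the nilpotent ideal of derivations with image in $J(kP)^2$ to $\End_{kE}(J(kP)/J(kP)^2)$ (Proposition \ref{rad-Prop}, Lemmas \ref{lem3} and \ref{lem4}) followed by the $\gl_n$ solvability analysis. The only, harmless, variation is that you identify $\End_{kE}(J/J^2)$ with $k\ten_{\Fp}\End_{\Fp E}(P/\Phi(P))$ and invoke Wedderburn's little theorem over $\Fp$, whereas the paper decomposes over $k$ directly and transfers multiplicity-freeness between $\Fp$ and $k$ via Lemma \ref{lem5}.
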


The two theorems above will be proved in Section \ref{Proof-Section}.
When the acting $p'$-group $E$ is abelian as well, we can be more precise. See 
Section \ref{twisted-Section} for the notation and basic facts on twisted group
algebras. The following result will be proved in Section \ref{proof-section-2}.

\begin{Theorem} \label{twisted-PE-thm}
Let $P$ be a non-trivial finite abelian $p$-group and $E$ an abelian $p'$-subgroup 
of $\Aut(P)$. 
Let $\alpha\in$ $Z^2(E;k^\times)$ inflated to $P\rtimes E$ via the canonical 
surjection $P\rtimes E\to E$.  Suppose that  $[P,E]=P$. 
\begin{itemize}
\item[{\rm (i)}]
Every class in $\HH^1(k_\alpha(P\rtimes E))$ is represented by a derivation on 
$k_\alpha(P\rtimes E)$ with image contained in the Jacobson radical 
$J(k_\alpha(P\rtimes E))$.
\item[{\rm (ii)}]
If the   $\Fp E$-module $P/\Phi(P)$ is multiplicity free, then the Lie algebra 
$\HH^1(k_\alpha(P\rtimes E))$ is solvable. The converse holds if  $p$ is odd.
\end{itemize}
\end{Theorem}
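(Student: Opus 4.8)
The plan is to reduce the twisted case to the untwisted one treated in Theorem \ref{E-stable-derivations-thm} by exploiting that $E$ is abelian of order prime to $p$, so that the $2$-cocycle $\alpha$ is essentially inert on the relevant structure. First I would record the decomposition of $k_\alpha(P\rtimes E)$ afforded by the action of $E$: since $|E|$ is invertible in $k$, the subalgebra $k_\alpha E$ is semisimple, and $k_\alpha(P\rtimes E)$ is a crossed product of $kP$ with $k_\alpha E$. Writing $kP = \bigoplus_\chi kP_\chi$ for the $\alpha$-untwisted part is not quite right; instead I would use the Clifford-theoretic picture: $k_\alpha(P\rtimes E) \cong kP \rtimes_\alpha E$, and because $P$ is a $p$-group while $E$ is a $p'$-group, $J(k_\alpha(P\rtimes E)) = J(kP)\cdot k_\alpha(P\rtimes E)$, with the quotient $k_\alpha(P\rtimes E)/J \cong k_\alpha E$ (a product of matrix algebras, in fact of copies of $k$ if $k$ is large enough, but in any case semisimple). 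This identifies $\HH^1$ of the quotient and gives a handle on derivations modulo the radical.

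Next I would set up the analogue of the decomposition used for $\HH^1(kP)^E$. A derivation $d$ on $k_\alpha(P\rtimes E)$ restricts to a derivation on $kP$; the key point is that conjugation by the (twisted) group elements $e\in E$ inside $k_\alpha(P\rtimes E)$ induces the original action of $E$ on $kP$ up to inner derivations, so the restriction of $d$ to $kP$, after adjusting by an inner derivation of $k_\alpha(P\rtimes E)$, can be taken to be $E$-stable. This is where I expect the main obstacle to lie: one must show that every class in $\HH^1(k_\alpha(P\rtimes E))$ has a representative whose restriction to $kP$ is an $E$-stable derivation, and simultaneously whose values on the twisted units $e\in E$ are controlled — the twisting by $\alpha$ interferes with the naive averaging argument, and one has to check that the obstruction to making $d(e)$ vanish (or lie in $J$) is killed because $H^1(E; (\text{something}))=0$ by the $p'$-order of $E$. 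Concretely I would filter $d$ by its "$E$-component" $d|_E\colon k_\alpha E \to k_\alpha(P\rtimes E)$ and its "$P$-component", show the $E$-component is inner modulo $J$ using $H^1(E;-)=0$ over $k$, and then invoke part (i) applied to the untwisted setting, i.e. Theorem \ref{E-stable-derivations-thm}(i), to conclude that the $P$-component has image in $J(kP)\subseteq J(k_\alpha(P\rtimes E))$. Assembling these gives part (i).

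For part (ii), granting (i), I would argue that the subalgebra of $\HH^1(k_\alpha(P\rtimes E))$ represented by radical-valued derivations is a solvable ideal: a derivation with image in $J$ is, up to inner derivations, locally nilpotent in the appropriate sense, and the bracket of two such derivations again has image in $J^2$, so the lower central series (or derived series) of this Lie algebra descends through the radical filtration $J \supseteq J^2 \supseteq \cdots$ and terminates, since $J$ is nilpotent. One subtlety is that $\HH^1$ is a quotient of derivations by inner derivations, so I would check that the image of the radical-valued derivations is an ideal of the full $\HH^1$ and that the complementary piece coming from $k_\alpha E$ contributes nothing to obstruct solvability; here the multiplicity-freeness of $P/\Phi(P)$ as an $\Fp E$-module enters exactly as in the proof of Theorem \ref{E-stable-derivations-thm}(ii), controlling the "degree-one" part of the derivations that is not automatically radical-valued. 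For the converse when $p$ is odd, I would show that if $P/\Phi(P)$ is not multiplicity free then $\HH^1(kP)^E$ already contains a copy of a non-solvable Lie algebra (as in the converse of Theorem \ref{E-stable-derivations-thm}(ii)), and that this Lie subalgebra embeds into $\HH^1(k_\alpha(P\rtimes E))$ — the twist by $\alpha$ does not destroy this embedding because $\alpha$ is inflated from $E$ and hence acts trivially on the relevant $P$-derivations. The hardest part overall is the careful bookkeeping in part (i) of how the $2$-cocycle $\alpha$ interacts with the averaging over $E$; once that is pinned down, parts (ii) and the converse follow the template of Theorem \ref{E-stable-derivations-thm} closely.
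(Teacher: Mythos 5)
Your reduction strategy is reasonable in outline, but there are two genuine gaps, the serious one being in part (ii). Your argument there rests on the claim that derivations with image in $J=J(k_\alpha(P\rtimes E))$ form a \emph{solvable} ideal because ``the bracket of two such derivations again has image in $J^2$''. This is false: by Proposition \ref{rad-Prop} one only has $[\Der_1(A),\Der_1(A)]\subseteq \Der_1(A)$, and $\Der_1$ is typically far from solvable --- already for $P$ elementary abelian of rank $n\geq 2$ the algebra $\Der_1(kP)$ surjects onto $\gl_n(k)$ (Lemma \ref{lem3}); concretely, for $kP=k[x,y]/(x^p,y^p)$ the derivations $x\partial_y$ and $y\partial_x$ have image in $J$ but their bracket does not have image in $J^2$. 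If your claim were true, part (i) alone would force solvability with no hypothesis on $P/\Phi(P)$, contradicting the converse in (ii). The multiplicity-freeness cannot simply ``enter as in Theorem \ref{E-stable-derivations-thm}(ii)'': that theorem controls $\HH^1(kP)^E$, and to transfer its conclusion to $\HH^1(k_\alpha(P\rtimes E))$ you must control the off-diagonal summands $\H^1(P;kP\cdot e)^E$, $e\in E\setminus\{1\}$, appearing in the decomposition of Lemma \ref{twisted2}(ii), and in particular their brackets with the diagonal part. The paper does this by a computation you do not have: writing $P=T\times F$ with $T=C_P(e)$, $F=[P,\langle e\rangle]$, one gets $\H^1(F;kP\cdot e)=0$ (Lemma \ref{fpf-Lemma}), then by the K\"unneth formula \ref{Kuenneth-groups-1} $\H^1(P;kP\cdot e)\cong \H^1(T;k)\tenk (kP\cdot e)^F$ with $(kP\cdot e)^F\cong kT$ spanned by the elements $t(\sum_{a\in F}a)\cdot e$; since $[P,E]=P$ forces $[F,E]=F$, so $|F|\geq 3$, the socle element $\sum_{a\in F}a$ lies in $J(kF)^2$, hence every off-diagonal class is represented by a derivation with image in $J^2$. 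This yields $\HH^1(k_\alpha(P\rtimes E))=\HH^1(kP)^E+D_2$ with $D_2$ a nilpotent ideal (image of $\Der_2$), and Corollary \ref{rad-Cor}(iv) then reduces solvability exactly to $\HH^1(kP)^E$, where Theorem \ref{E-stable-derivations-thm}(ii) applies. Your converse direction (embedding a non-solvable copy of $\HH^1(kP)^E$ via the inflated cocycle) is fine.

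There is also a smaller gap in your part (i): after normalising a representative to vanish on $k_\alpha E$ (Lemma \ref{Czero}), its restriction to $kP$ takes values in all of $k_\alpha(P\rtimes E)=\oplus_{e\in E}\,kP\cdot\hat e$, and Theorem \ref{E-stable-derivations-thm}(i) only covers the component landing in $kP$ itself. The components $kP\to kP\cdot\hat e$ with $e\neq 1$ need a separate argument --- in the paper either Proposition \ref{Pe-prop} (an $E$-stable derivation $kP\to kPe$ has image in $J(kP)e$, using $[P,E]=P$ via Lemma \ref{PhiP-Lemma-2}) or, better, the $J^2$-computation above, which is what part (ii) actually requires. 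So part (i) is fixable by the argument of Proposition \ref{Pe-prop}/Corollary \ref{Pe-cor-2}, but as written your appeal to the untwisted Theorem \ref{E-stable-derivations-thm}(i) does not cover these components, and your averaging/$\H^1(E;-)=0$ bookkeeping for the $\alpha$-twist, while plausible, is not the point where the real work lies.
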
 

If one replaces twisted group algebras by group algebras of corresponding central
extensions, then Theorem \ref{twisted-PE-thm} admits an equivalent
reformulation, in which the acting group $E$ need not act faithfully and need
not be abelian so long as its image in $\Aut(P)$ is abelian; see Theorem
\ref{central-PE-thm} below. 
We illustrate the above results in conjunction with the
structure theory of normal defect blocks in Theorem \ref{normal-defect-thm}
and Corollary \ref{normal-defect-cor}, and we determine under what circumstances
the Lie algebra $\HH^1(B)$ is simple or solvable for blocks $B$ with elementary abelian
defect of rank $2$ and abelian inertial quotient in Example \ref{CpCp-example}.

\section{Background material} 

Let $k$ be a field.
Let $A$ be an associative unital $k$-algebra. A {\em derivation on $A$} is a $k$-linear 
map $f : A\to A$ satisfying the Leibniz rule $f(ab)=f(a)b+af(b)$, for all $a$, $b \in A$. 
The Leibniz rule implies that any derivation $f$ on $A$ vanishes at all central 
idempotents; in particular, $f(1)=0$.
The set $\Der(A)$ of all derivations on $A$ is a Lie subalgebra of $\End_k(A)$
with Lie bracket $[f,g]=f\circ g-g\circ f$, for all $f$, $g\in \End_k(A)$.
If $c\in A$, then the map $[c,-]$ sending $a\in A$ to the additive commutator
 $[c,a]=ca-ac$ is a derivation.
The derivations of this form are called {\em inner derivations on} $A$, and
the subspace $\IDer(A)$ of inner derivations is an ideal in the Lie algebra
$\Der(A)$. 

For $M$ an $A$-$A$-bimodule, regarded as an $A\tenk A^\op$-module, the
Hochschild cohomology of $A$ with coefficients in $M$ is the graded $k$-module
$\HH^*(A;M)=$ $\Ext_{A\tenk A^\op}^*(A; M)$. We set $\HH^*(A)=$ $\HH^*(A;A)$.
Then $\HH^*(A)$ is a graded-commutative algebra, and $\HH^*(A;M)$ is
a graded right $\HH^*(A)$-module.   We have canonical identifications
$\HH^0(A)\cong \mathrm{Z}(A)$ and $\HH^1(A)\cong \Der(A)/\IDer(A)$; see for 
instance Weibel \cite[Lemma 9.2.1]{Weibel}. If $f$ is a derivation on $A$ and 
$\alpha$ a $k$-algebra automorphism of $A$, then $\alpha^{-1} \circ f\circ \alpha$ 
is a derivation on $A$, and if $f$ is an inner derivation, then so is 
$\alpha^{-1}\circ f\circ \alpha$. Thus if $E$ is a group acting on $A$ by $k$-algebra
automorphisms, then this action induces an action of $E$ on $\HH^1(A)$ by 
Lie algebra automorphisms, and the subspace $\HH^1(A)^E$ of $E$-fixed points
in $\HH^1(A)$ is a Lie subalgebra of $\HH^1(A)$.  We will need the following
well-known facts. 

\begin{Lemma}[{cf. \cite[Lemma 3.1]{LiRusimple}}] \label{ZA-derivations}
Let $A$  be a finite-dimensional associative unital $k$-algebra.
For every derivation $f$ on $A$ we have $f(Z(A))\subseteq$ $Z(A)$.
\end{Lemma}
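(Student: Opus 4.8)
The plan is to show that a derivation $f$ on $A$ preserves the center by exploiting that $f$ commutes — up to an inner correction — with multiplication by central elements. Concretely, fix $z \in Z(A)$ and $a \in A$. I want to show $f(z)a = af(z)$. Applying $f$ to the identity $za = az$ and using the Leibniz rule gives
\[
f(z)a + zf(a) = f(a)z + af(z).
\]
Since $z$ is central, $zf(a) = f(a)z$, so these two middle terms cancel, leaving $f(z)a = af(z)$. As $a$ was arbitrary, $f(z) \in Z(A)$, which is exactly the claim.

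This argument is in fact completely formal and uses neither finite-dimensionality nor any structural hypothesis on $A$; so the main "obstacle" is merely to present the one-line computation cleanly. I would phrase the proof as: let $z \in Z(A)$; for any $a \in A$ we have $za = az$, hence $f(za) = f(az)$, and expanding both sides by the Leibniz rule yields $f(z)a + zf(a) = f(a)z + af(z)$; the centrality of $z$ gives $zf(a) = f(a)z$, so $f(z)a = af(z)$, and since $a$ was arbitrary, $f(z) \in Z(A)$.

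It is worth noting in the write-up that the finite-dimensionality hypothesis stated in the lemma is not needed for this particular statement — it is presumably included because the cited source \cite{LiRusimple} works in that setting and other parts of that lemma may require it — but since the excerpt allows me to assume results stated earlier, I will simply give the short self-contained argument above rather than quote the reference. If the paper prefers to match the reference verbatim, one could alternatively cite \cite[Lemma 3.1]{LiRusimple} directly, but the two-line proof is cleaner and removes any dependency.
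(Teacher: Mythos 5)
Your argument is correct: differentiating the relation $za=az$ via the Leibniz rule and cancelling $zf(a)=f(a)z$ is exactly the standard (and essentially only) proof of this fact, which the paper itself does not reprove but delegates to the cited reference \cite[Lemma 3.1]{LiRusimple}. Your observation that finite-dimensionality plays no role here is also accurate; the hypothesis is carried along only because it is the standing assumption in the surrounding context.
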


\begin{Lemma}[{cf. \cite[Lemma 2.4]{LiRuHH1} }] \label{Czero}
Let $A$ be a  finite-dimensional associative unital $k$-algebra. Suppose that $A$
has a separable subalgebra  $C$ such that $A=$ $C\oplus J(A)$. 
Every class in $\HH^1(A)$ has a representative $f\in$ $\Der(A)$
satisfying $C\subseteq$ $\ker(f)$.
\end{Lemma}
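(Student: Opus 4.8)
The plan is to start from an arbitrary derivation $g\in\Der(A)$ representing a given class in $\HH^1(A)\cong\Der(A)/\IDer(A)$ and to correct $g$ by an inner derivation so that the result vanishes on $C$. The first observation is that the restriction $g|_C$ is a derivation from $C$ into $A$, where $A$ is regarded as a $C$-$C$-bimodule via the inclusion $C\hookrightarrow A$; that is, $g|_C$ is a Hochschild $1$-cocycle in $Z^1(C;A)$.

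The key input is the separability of $C$: a finite-dimensional separable $k$-algebra $C$ is projective as a module over its enveloping algebra $C\tenk C^\op$, and hence $\HH^n(C;M)=0$ for every $C$-$C$-bimodule $M$ and every $n\geq 1$. Applying this with $M=A$ gives $\HH^1(C;A)=0$, so the cocycle $g|_C$ is a coboundary: there is an element $a\in A$ with $g(c)=[a,c]$ for all $c\in C$. (Concretely, $a$ can be produced from a separability idempotent $\sum_i x_i\tenk y_i$ of $C$ as an appropriate sign multiple of $\sum_i x_i g(y_i)$, the identity $g(c)=[a,c]$ then being a one-line computation using the Leibniz rule and the defining properties of the idempotent, with the sign chosen to match the convention for inner derivations fixed in Section 2.) Now put $f=g-[a,-]$. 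This is again a derivation on $A$; it represents the same class as $g$ in $\HH^1(A)$ because it differs from $g$ by the inner derivation $[a,-]$; and $f(c)=g(c)-[a,c]=0$ for every $c\in C$, so $C\subseteq\ker(f)$, as required.

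I do not expect a genuine obstacle here: the proof is essentially a one-line reduction to the homological characterisation of separable algebras, together with the elementary remark that a cocycle differing from $g$ by a coboundary corresponds to a derivation differing from $g$ by an inner derivation. The only points that need a little care are the sign bookkeeping in the formula for $a$, and the observation that the hypothesis $A=C\oplus J(A)$ is in fact not used in the argument above — it merely records the typical situation in which a separable subalgebra of this size is available (a Wedderburn--Malcev complement), and it is the relevant setting for the later applications of the lemma.
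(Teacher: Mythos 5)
Your argument is correct and is essentially the argument the paper relies on: the paper gives no proof of its own but cites \cite[Lemma 2.4]{LiRuHH1}, remarking that the proof only needs $C$ separable, and your reduction — $g|_C$ is a $1$-cocycle in $Z^1(C;A)$, separability of $C$ gives $\HH^1(C;A)=0$ (e.g.\ via the separability idempotent, as you indicate), so $g$ can be corrected by the inner derivation $[a,-]$ to vanish on $C$ — is exactly that standard argument. Your observation that the hypothesis $A=C\oplus J(A)$ is not actually used is also consistent with the paper's remark; it only serves to pin down $C\cong A/J(A)$ via Wedderburn--Malcev for the later applications.
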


We note that in \cite[Lemma 2.4, Proposition 2.8]{LiRuHH1} the algebra $A$ is assumed 
to  be split, but the proof there shows that this is not needed so long as in the previous
Lemma $A$ is  assumed to have a  separable subalgebra $C$ satisfying $A=C\oplus J(A)$.  
By the  Malcev-Wedderburn Theorem, this is equivalent to requiring $A/J(A)$ to be 
separable,  in which case we have $C\cong A/J(A)$.
If $f$ is a derivation on $A$ which vanishes on $C$ and sends $J(A)$ to $J(A)^m$ for 
some positive integer $m$, then in fact $\Im(f)\subseteq J(A)^m$. The following 
Proposition is a  slight variation of \cite[Proposition 2.8]{LiRuHH1}, with essentially 
unchanged proofs, making repeatedly use of the Leibniz rule. 

\begin{Proposition} \label{rad-Prop}
Let $A$ be a  finite-dimensional associative unital $k$-algebra. 
For $m\geq 1$, denote by $\Der_m(A)$ the subspace of $\Der(A)$ consisting
of all derivations $f : A\to$ $A$ such that 
such that $\Im(f) \subseteq$ $J(A)^m$.
The following hold.

\begin{enumerate}
\item[\rm (i)]
For any positive integers $m$, $n$ we have $[\Der_m(A), \Der_n(A)]\subseteq$
$\Der_{m+n-1}(A)$. 

\item[\rm (ii)]
The space $\Der_1(A)$ is a Lie subalgebra of $\Der(A)$, and for any positive 
integer $m$, the space $\Der_m(A)$ is a Lie ideal in $\Der_1(A)$. 

\item[\rm (iii)]
The space $\Der_2(A)$ is a nilpotent ideal in $\Der_1(A)$. More precisely, if
$\ll(A)\leq 2$, then $\Der_2(A)=$ $0$, and if $\ll(A)>2$, then the
nilpotency class of $\Der_2(A)$ is at most $\ll(A)-2$. 
\end{enumerate}
\end{Proposition}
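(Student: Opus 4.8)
The plan is to verify each of the three parts by a direct computation with the Leibniz rule, using only the fact that $J(A)$ is an ideal and that $J(A)$ is nilpotent (being the radical of a finite-dimensional algebra). For part (i), let $f\in\Der_m(A)$ and $g\in\Der_n(A)$, so $\Im(f)\subseteq J(A)^m$ and $\Im(g)\subseteq J(A)^n$. I would like to conclude that $[f,g]=f\circ g-g\circ f$ has image in $J(A)^{m+n-1}$. The point is that $g(J(A)^m)\subseteq J(A)^{m+n-1}$: writing an element of $J(A)^m$ as a sum of products $a_1a_2\cdots a_m$ with each $a_i\in J(A)$, the Leibniz rule expands $g(a_1\cdots a_m)$ as a sum of $m$ terms, each of which has one factor replaced by $g(a_i)\in J(A)^n$ and the remaining $m-1$ factors in $J(A)$, hence lies in $J(A)^{m-1+n}$. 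Applying $f\circ g$ to $A$ therefore lands in $f(J(A)^n)\subseteq J(A)^{m+n-1}$ after first noting $g(A)\subseteq J(A)^n$ and then $f(J(A)^n)\subseteq J(A)^{m+n-1}$ by the symmetric argument; and $g\circ f$ similarly. (One has to be a little careful about which of the two estimates one uses, but both terms land in $J(A)^{m+n-1}$, so the bracket does too.)

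Part (ii) is essentially a bookkeeping consequence of (i). Taking $m=n=1$ in (i) gives $[\Der_1(A),\Der_1(A)]\subseteq\Der_1(A)$, so $\Der_1(A)$ is a Lie subalgebra; it is clearly a subspace and closed under the bracket inherited from $\Der(A)$. For a fixed $m\geq 1$, taking $n=1$ in (i) gives $[\Der_m(A),\Der_1(A)]\subseteq\Der_m(A)$ (since $m+1-1=m$), which says precisely that $\Der_m(A)$ is a Lie ideal in $\Der_1(A)$; again $\Der_m(A)$ is visibly a subspace.

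For part (iii), first note $\Der_2(A)$ is an ideal in $\Der_1(A)$ by (ii). If $\ll(A)\leq 2$ then $J(A)^2=0$, so any $f$ with $\Im(f)\subseteq J(A)^2$ is zero, giving $\Der_2(A)=0$. If $\ll(A)=\ell>2$, iterate (i): the $r$-fold bracket of elements of $\Der_2(A)$ lies in $\Der_{2r-(r-1)}(A)=\Der_{r+1}(A)$, by an easy induction on $r$ using (i). Once $r+1>\ell$, i.e.\ $r\geq\ell-1$... wait, one needs $r+1 > \ell$, so $r \geq \ell$; but the sharper bound $\ell-2$ in the statement requires a slightly more careful count. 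The lower central series of $\Der_2(A)$ has $r$-th term contained in $\Der_{r+1}(A)$, and $\Der_{r+1}(A)=0$ as soon as $r+1\geq\ell$, i.e.\ $r\geq\ell-1$; but the nilpotency class is the largest $r$ with the $r$-th term of the lower central series nonzero, and since $\gamma_{\ell-1}(\Der_2(A))\subseteq\Der_\ell(A)$ which need not vanish while $\gamma_\ell\subseteq\Der_{\ell+1}(A)=0$... I should recheck the indexing convention: with $\gamma_1=\Der_2(A)$ and $\gamma_{r+1}=[\Der_2(A),\gamma_r]$, induction via (i) gives $\gamma_r\subseteq\Der_{r+1}(A)$, so $\gamma_{\ell-1}\subseteq\Der_\ell(A)$, and $\Der_\ell(A)=0$ since $J(A)^\ell=0$ (as $\ll(A)=\ell$ means $J(A)^\ell=0$, $J(A)^{\ell-1}\neq0$). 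Hence $\gamma_{\ell-1}(\Der_2(A))=0$, which is exactly nilpotency class at most $\ell-2$ in the convention where class $c$ means $\gamma_{c+1}=0$.

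The main obstacle — really the only subtlety — is the off-by-one bookkeeping in (iii): keeping the indexing of the lower central series, the exponents $J(A)^m$, and the definition of $\ll(A)$ consistent, so that one genuinely gets class $\leq \ll(A)-2$ rather than something weaker. Everything else is a routine unwinding of the Leibniz rule, exactly as in \cite[Proposition 2.8]{LiRuHH1}.
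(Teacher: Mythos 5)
Your proposal is correct and follows essentially the same route as the paper, which simply adapts the Leibniz-rule argument of \cite[Proposition 2.8]{LiRuHH1}: the key estimate $f(J(A)^n)\subseteq J(A)^{m+n-1}$ for $f\in\Der_m(A)$ gives (i), and (ii), (iii) follow by the bookkeeping you describe. Despite the mid-proof hesitation, your final count for (iii) is the right one: with $\gamma_1=\Der_2(A)$ one gets $\gamma_r\subseteq\Der_{r+1}(A)$ by induction, so $\gamma_{\ll(A)-1}\subseteq\Der_{\ll(A)}(A)=0$, i.e.\ nilpotency class at most $\ll(A)-2$.
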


\begin{Corollary} \label{rad-Cor}
With the notation and hypotheses of Proposition \ref{rad-Prop}, the following hold.
\begin{itemize}
\item[{\rm (i)}]
If $[\Der_1(A),\Der_1(A)] \subseteq \Der_2(A)$, then $\Der_1(A)$ is a solvable
Lie algebra.
\item[{\rm (ii)}]
If $[\Der_1(A),\Der_1(A)]\subseteq \Der_2(A) + \IDer(A)$, then the image of 
$\Der_1(A)$ in $\HH^1(A)$ is a solvable Lie algebra.
\item[{\rm (iii)}] 
If the canonical map $\Der_1(A)\to \HH^1(A)$ is surjective, and if 
$[\Der_1(A),\Der_1(A)] \subseteq \Der_2(A) +\IDer(A)$, then $\HH^1(A)$ is a 
solvable  Lie algebra.
\item[{\rm (iv)}]
If the canonical map $\Der_1(A)\to \HH^1(A)$ is surjective, then the image of 
$\Der_2(A)$ is a nilpotent ideal in $ \HH^1(A)$. Furthermore, suppose 
$\HH^1(A)= L+D_2$ where $L$ is a Lie subalgebra and $D_2$ is the image of 
$\Der_2(A)$, then $\HH^1(A)$ is solvable if and only if $L$ is solvable.
\end{itemize} 
\end{Corollary}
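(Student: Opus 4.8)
The plan is to derive each part directly from Proposition~\ref{rad-Prop}, using that a Lie algebra with a solvable ideal whose quotient is solvable is itself solvable, and that nilpotent implies solvable.

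For part (i): if $[\Der_1(A),\Der_1(A)]\subseteq \Der_2(A)$, then by Proposition~\ref{rad-Prop}(ii) the space $\Der_2(A)$ is an ideal of $\Der_1(A)$, and the quotient $\Der_1(A)/\Der_2(A)$ is abelian by hypothesis; by Proposition~\ref{rad-Prop}(iii), $\Der_2(A)$ is nilpotent, hence solvable, so $\Der_1(A)$ is an extension of an abelian Lie algebra by a solvable Lie algebra, hence solvable. For part (ii): pass to the image $\bar L$ of $\Der_1(A)$ in $\HH^1(A)=\Der(A)/\IDer(A)$; the hypothesis $[\Der_1(A),\Der_1(A)]\subseteq \Der_2(A)+\IDer(A)$ says that $[\bar L,\bar L]$ is contained in the image $D_2$ of $\Der_2(A)$; since $D_2$ is a homomorphic image of the nilpotent Lie algebra $\Der_2(A)$ it is nilpotent, hence solvable, and $\bar L/(\bar L\cap D_2)$ — or more simply $\bar L/\langle [\bar L,\bar L]\rangle$ — is abelian, so $\bar L$ is solvable; one must note $D_2\subseteq \bar L$ so that $D_2$ really is an ideal of $\bar L$, which follows because $\Der_2(A)\subseteq \Der_1(A)$. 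Part (iii) is then immediate: the surjectivity of $\Der_1(A)\to \HH^1(A)$ makes the image $\bar L$ in (ii) equal to all of $\HH^1(A)$.

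For part (iv): again $D_2$, the image of $\Der_2(A)$ in $\HH^1(A)$, is a quotient of the nilpotent Lie algebra $\Der_2(A)$, hence nilpotent; it is an ideal of $\HH^1(A)$ because $\Der_2(A)$ is an ideal of $\Der_1(A)$ (Proposition~\ref{rad-Prop}(ii)) and $\Der_1(A)$ surjects onto $\HH^1(A)$, so brackets of $\HH^1(A)$ with $D_2$ land in $D_2$. For the final assertion, write $\HH^1(A)=L+D_2$ with $L$ a Lie subalgebra; if $\HH^1(A)$ is solvable then so is its subalgebra $L$; conversely, if $L$ is solvable, then since $D_2$ is a solvable ideal and $\HH^1(A)/D_2$ is a homomorphic image of $L$ (because $L+D_2=\HH^1(A)$ forces $L\to \HH^1(A)/D_2$ to be surjective), the quotient $\HH^1(A)/D_2$ is solvable, and hence $\HH^1(A)$ is solvable.

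I do not expect any genuine obstacle here; the content is entirely formal, the only points requiring a moment's care being the bookkeeping of which spaces are ideals of which (so that the extension arguments apply) and the observation that homomorphic images and Lie subalgebras of solvable (resp.\ nilpotent) Lie algebras are again solvable (resp.\ nilpotent). These facts, together with Proposition~\ref{rad-Prop}, make the corollary a routine consequence.
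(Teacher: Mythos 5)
Your proposal is correct and follows essentially the same route as the paper: all four parts are formal consequences of Proposition \ref{rad-Prop}, using that $\Der_2(A)$ (and hence its image $D_2$) is nilpotent and an ideal, together with the standard facts that subalgebras, quotients, and extensions of solvable Lie algebras are solvable. Your handling of (iv) via the surjection $L\to \HH^1(A)/D_2$ is just a repackaging of the paper's derived-series computation $\HH^1(A)^{(i)}\subseteq L^{(i)}+D_2$, so there is nothing substantive to add.
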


\begin{proof}
The statements (i) and (ii)  follow from Proposition \ref{rad-Prop} (iii) and the 
assumptions.  Statement (iii) are immediate consequences of (ii). 
As for statement (iv), suppose $\HH^1(A)= L+D_2$ as in the statement. If $L$ is 
not solvable, then $ \HH^1(A)$ is not solvable. Suppose $L$ is solvable, and 
$L(n)=0$ for some positive integer $n$, where $L(n)$ denotes the $n$-th derived
Lie algebra of $L$. 
Since $D_2$ is an ideal, it follows that for any $i\geq 1$ we have
$ \HH^1(A)(i)\subseteq L(i)+D_2$. Thus, $\HH^1(A)(n)\subseteq D_2$. 
The statement follows since $D_2$ is nilpotent.
\end{proof}

We denote by $[A,A]$ the subspace spanned by the set  of additive commutators 
$[a,b]=ab-ba$, where $a$, $b \in A$. Since $[a,b]c=abc-bac=abc-acb+acb-bac=
a[b,c] + [ac,b]$ for all $a$, $b$, $c\in A$, we have $[A,A]A=$ $A[A,A]$, and this
is the smallest ideal such that the corresponding quotient of $A$ is commutative.

\begin{Lemma} \label{Der-Lemma1}
Let $A$ be a finite-dimensional associative unital $k$-algebra. Every derivation on $A$
preserves the subspace $[A,A]$ and the ideal $[A,A]A$, and induces a
derivation on $A/[A,A]A$. Under this correspondence, an inner derivation on
$A$ is mapped to zero. In particular, this correspondence induces a
Lie algebra homomorphism $\HH^1(A)\to \HH^1(A/[A,A]A)$.
\end{Lemma}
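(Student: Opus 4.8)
The plan is to deduce every assertion directly from the Leibniz rule; the only computation requiring any thought is a single commutator identity, after which everything else is bookkeeping. First I would check that an arbitrary derivation $f$ on $A$ preserves $[A,A]$: for $a,b\in A$ one computes
\[
f([a,b]) = f(ab)-f(ba) = \bigl(f(a)b-bf(a)\bigr) + \bigl(af(b)-f(b)a\bigr) = [f(a),b] + [a,f(b)],
\]
which lies in $[A,A]$; since $[A,A]$ is spanned by the commutators $[a,b]$, this gives $f([A,A])\subseteq [A,A]$. Next, $[A,A]A$ is spanned by products $xc$ with $x\in[A,A]$ and $c\in A$, and $f(xc)=f(x)c+xf(c)$ with $f(x)\in[A,A]$ by the previous step and $xf(c)\in[A,A]A$ trivially; hence $f$ preserves the ideal $I:=[A,A]A$.

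Since $I$ is an $f$-stable ideal, $f$ descends to a well-defined $k$-linear endomorphism $\bar f$ of $A/I$, and the Leibniz rule for $\bar f$ follows from that for $f$ by reducing modulo $I$, so $\bar f\in\Der(A/I)$. If $f=[c,-]$ is inner, then $f(a)=[c,a]\in[A,A]\subseteq I$ for every $a\in A$, so $\bar f=0$; in particular inner derivations on $A$ are sent to inner derivations on $A/I$ (in fact to zero).

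Finally, the assignment $f\mapsto\bar f$ is $k$-linear from $\Der(A)$ to $\Der(A/I)$ and is a homomorphism of Lie algebras, since $\overline{f\circ g}$ and $\bar f\circ\bar g$ both send $a+I$ to $f(g(a))+I$, whence $\overline{[f,g]}=[\bar f,\bar g]$. Composing with the projection $\Der(A/I)\to\HH^1(A/I)$ and using that $\IDer(A)$ maps to $0$, this factors through $\HH^1(A)=\Der(A)/\IDer(A)$ and yields the desired Lie algebra homomorphism $\HH^1(A)\to\HH^1(A/I)$. I do not expect a serious obstacle here: the argument is elementary, and the only points deserving care are the well-definedness of the two quotient constructions and their compatibility with the bracket, all of which are immediate from the $f$-stability of $I$ established in the first paragraph.
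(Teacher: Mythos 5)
Your proposal is correct and follows essentially the same route as the paper: the identity $f([a,b])=[f(a),b]+[a,f(b)]$, the Leibniz rule to pass to the ideal $[A,A]A$, and the observation that an inner derivation has image in $[A,A]$ so induces zero on the quotient. The paper states the remaining verifications more briefly, but there is no difference in substance.
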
 

\begin{proof}
If $f$ is a derivation on $A$, then $f([a,b])=$ $f(a)b+af(b)-f(b)a-bf(a)=$ 
$[f(a),b]+[a,f(b)]$, and hence $f$ preserves the subspace $[A,A]$ and  hence also the
ideal $[A,A]A$, using the Leibniz rule. Thus $f$ induces a derivation on 
$A/[A,A]A$. If $f$ is inner, then the image of $f$ is contained in $[A,A]$, and
the rest follows easily.
\end{proof}

Suppose now that $k$ has prime characteristic $p$.
If $A=kG$ for some finite group $G$, then the largest commutative quotient
of $kG$ is $kG/G'$, where $G'$ is the commutator subgroup of $G$.
Thus $[kG,kG]kG=I(kG')kG$. One can verify  this also directly by noting
the relation between additive and multiplicative commutators
$xyx^{-1}y^{-1}-1=$ $[x,y]x^{-1}y^{-1}$ for all $x$, $y\in G$. Thus Lemma
\ref{Der-Lemma1} specialises to the following observation.

\begin{Lemma} \label{Der-Lemma2}
Let $G$ be a finite group. Denote by $G'$ the commutator subgroup.
Every derivation on $kG$ induces a derivation on $kG/G'$, and every
inner derivation on $kG$ induces the zero map on $kG/G'$. 
Through this correspondence, the canonical surjection $G\to G/G'$induces a
Lie algebra homomorphism $\HH^1(kG)\to $ $\HH^1(kG/G')$.
\end{Lemma}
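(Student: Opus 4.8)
The plan is to specialise Lemma \ref{Der-Lemma1} to the group algebra setting and then merely unwind the identification $[kG,kG]kG = I(kG')kG$. First I would recall that for any $k$-algebra $A$, Lemma \ref{Der-Lemma1} already gives that every derivation on $A$ preserves the ideal $[A,A]A$, induces a derivation on $A/[A,A]A$, sends inner derivations to the zero derivation on the quotient, and hence induces a Lie algebra homomorphism $\HH^1(A) \to \HH^1(A/[A,A]A)$. So the only thing left to check is the algebra isomorphism $kG/[kG,kG]kG \cong kG/G'$, which is exactly the preceding paragraph in the excerpt: the largest commutative quotient of $kG$ is $k[G/G']$, and $[kG,kG]kG = I(kG')kG$, where $I(kG')$ is the augmentation ideal of $kG'$. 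The multiplicative-additive commutator identity $xyx^{-1}y^{-1} - 1 = [x,y]x^{-1}y^{-1}$ for $x,y \in G$ shows that every $g-1$ with $g \in G'$ lies in $[kG,kG]kG$ (since $G'$ is generated by commutators and $[kG,kG]kG$ is an ideal), and conversely $ab - ba = a(b - b') $-type manipulations — more precisely $xy - yx = (xyx^{-1}y^{-1}-1)yx \in I(kG')kG$ for $x,y\in G$, and these span $[kG,kG]$ — give the reverse inclusion. Hence $kG/[kG,kG]kG$ is canonically isomorphic, as a $k$-algebra, to $k(G/G')$, and this isomorphism is induced by the canonical surjection $G \to G/G'$.

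With that identification in hand, the statement of Lemma \ref{Der-Lemma2} is just the translation of Lemma \ref{Der-Lemma1} under the isomorphism $A/[A,A]A \cong k(G/G')$: a derivation on $kG$ induces a derivation on $k(G/G')$, an inner derivation on $kG$ induces the zero derivation on $k(G/G')$, and passing to cohomology we get the Lie algebra homomorphism $\HH^1(kG) \to \HH^1(k(G/G'))$, which is precisely the map ``induced by the canonical surjection $G \to G/G'$''. I would write the proof as a one- or two-line deduction: ``This is the specialisation of Lemma \ref{Der-Lemma1} to $A = kG$, using that $[kG,kG]kG = I(kG')kG$ and hence $kG/[kG,kG]kG \cong k(G/G')$, the latter isomorphism being induced by $G \to G/G'$.''

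I do not expect any genuine obstacle here; the content is entirely in Lemma \ref{Der-Lemma1} and in the commutator identity that the excerpt has already spelled out. The only point requiring a sentence of care is making sure the functoriality is stated correctly — i.e., that the map $\HH^1(kG) \to \HH^1(k(G/G'))$ is literally the one arising from the surjective algebra homomorphism $kG \to k(G/G')$, so that it can legitimately be described as ``induced by the canonical surjection $G \to G/G'$''. That follows because the quotient map $A \to A/[A,A]A$ in Lemma \ref{Der-Lemma1} is a $k$-algebra homomorphism, and under the identification it becomes the $k$-linear extension of $G \to G/G'$; a derivation $f$ on $kG$ with its induced derivation $\bar f$ on $k(G/G')$ satisfies $\pi \circ f = \bar f \circ \pi$ for $\pi$ the quotient map, which is exactly compatibility with the functorial map on Hochschild cohomology along $\pi$.
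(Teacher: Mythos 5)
Your proposal is correct and follows exactly the paper's route: the paper's proof is likewise a one-line specialisation of Lemma \ref{Der-Lemma1} to $A=kG$, using the identity $[kG,kG]kG = I(kG')kG$ established in the preceding paragraph via the commutator relation $xyx^{-1}y^{-1}-1=[x,y]x^{-1}y^{-1}$. Your additional remarks on the isomorphism $kG/[kG,kG]kG\cong k(G/G')$ and its compatibility with the quotient map are just the details the paper leaves implicit.
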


\begin{proof}
This is a special case of Lemma \ref{Der-Lemma1}, using the equality
$[kG,kG]kG=$ $I(kG')kG$ mentioned above.
\end{proof}

 We use without further 
comment the standard fact that for $P$ a finite $p$-group the augmentation ideal 
$I(kP)$ in $kP$ is equal to the Jacobson radical $J(kP)$. We denote by $\Phi(P)$ the
Frattini subgroup of $P$; this is the smallest normal subgroup of $P$ such that the
quotient $P/\Phi(P)$ is elementary abelian, with the convention $\Phi(P)=1$ if $P=1$.
The following is well-known; we sketch a proof for convenience.

\begin{Lemma} \label{PhiP-Lemma}
Let $P$ be a  finite $p$-group and $E$ a subgroup of $\Aut(P)$. 
The map sending $y\in P$ to $y-1\in J(\Fp P)$ induces an isomorphism
of $\Fp E$-modules
$$P/\Phi(P) \cong J(\Fp P)/J(\Fp P)^2.$$
\end{Lemma}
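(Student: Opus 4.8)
The plan is to exhibit an explicit $\Fp$-linear surjection $P/\Phi(P) \to J(\Fp P)/J(\Fp P)^2$ via $y\mapsto (y-1) + J(\Fp P)^2$, check it is a well-defined group homomorphism (hence $\Fp$-linear, since both sides are elementary abelian), verify it is $E$-equivariant, and then conclude it is an isomorphism by a dimension count. First I would record the standard multiplicative-to-additive commutator identity in $\Fp P$: for $x,y\in P$ one has $xy-1 = (x-1)+(y-1)+(x-1)(y-1)$, so that modulo $J(\Fp P)^2$ the assignment $y\mapsto (y-1)+J(\Fp P)^2$ is additive, i.e. $xy-1\equiv (x-1)+(y-1)$. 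Since $P/\Phi(P)$ is the largest elementary abelian quotient of $P$ and $J(\Fp P)/J(\Fp P)^2$ is an $\Fp$-vector space (in particular an elementary abelian $p$-group), this homomorphism factors through $P/\Phi(P)$; concretely, for $x\in P$ one has $x^p-1 \equiv p(x-1)=0$ in $J(\Fp P)/J(\Fp P)^2$ and commutators $[x,y]-1=[x,y]x^{-1}y^{-1}\cdot(xy-yx)\cdot$(\dots) lie in $J(\Fp P)^2$, so $\Phi(P)$ maps into $J(\Fp P)^2$. The resulting map $\bar\varphi: P/\Phi(P)\to J(\Fp P)/J(\Fp P)^2$ is then automatically $\Fp$-linear.

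Next I would check $E$-equivariance, which is immediate from naturality: for $\sigma\in E$ and $y\in P$ we have $\sigma(y-1)=\sigma(y)-1$, and $\sigma$ preserves $J(\Fp P)$ and hence $J(\Fp P)^2$, so $\bar\varphi$ intertwines the action of $E$ on $P/\Phi(P)$ with that on $J(\Fp P)/J(\Fp P)^2$. It remains to see $\bar\varphi$ is bijective. Surjectivity follows because $J(\Fp P)=I(\Fp P)$ is spanned as an $\Fp$-space by the elements $y-1$, $y\in P$, hence $J(\Fp P)/J(\Fp P)^2$ is spanned by their images, which are exactly the elements in the image of $\bar\varphi$. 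For injectivity I would compare dimensions: writing $P/\Phi(P)\cong (\Z/p)^d$, it is the classical fact (a form of the Jennings/Quillen filtration, or a direct argument) that $\dim_{\Fp} J(\Fp P)/J(\Fp P)^2 = d$ as well — equivalently, $d$ is the minimal number of generators of $P$ and also the minimal number of algebra generators of $\Fp P$ over $\Fp$, which is $\dim J(\Fp P)/J(\Fp P)^2$ by Nakayama. A surjection of finite-dimensional $\Fp$-spaces of equal dimension is an isomorphism, so $\bar\varphi$ is the desired isomorphism of $\Fp E$-modules.

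The one point requiring a little care — and the only place the argument is not completely formal — is the dimension equality $\dim_{\Fp} J(\Fp P)/J(\Fp P)^2 = \dim_{\Fp} P/\Phi(P)$. I would justify it by the Nakayama/minimal-generator argument just indicated: a subset $S\subseteq P$ generates $P$ as a group if and only if $\{s-1 : s\in S\}$ generates $J(\Fp P)$ as an ideal (since the quotient $\Fp P/(\text{ideal generated by } s-1, s\in S)$ is $\Fp$ of the subgroup-quotient), if and only if the images of $s-1$ span $J(\Fp P)/J(\Fp P)^2$ by the graded Nakayama lemma; and a minimal generating set of $P$ has exactly $\dim_{\Fp} P/\Phi(P)$ elements by the Burnside basis theorem. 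Hence both spaces have dimension equal to the minimal number of generators of $P$, and the proof is complete. (Since the statement is well-known, this level of sketch suffices; one could alternatively cite the Jennings filtration directly.)
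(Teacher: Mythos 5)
Your proposal is correct and follows essentially the same route as the paper: the identity $(x-1)(y-1)=(xy-1)-(x-1)-(y-1)$ shows $y\mapsto y-1$ induces a group homomorphism into $J(\Fp P)/J(\Fp P)^2$, which factors through $P/\Phi(P)$ since the target is elementary abelian, is surjective because the elements $y-1$ span $J(\Fp P)$, and is then an isomorphism by the dimension count. The only difference is that you spell out the dimension equality via Nakayama and the Burnside basis theorem, a detail the paper leaves as ``easily seen''.
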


\begin{proof}
Set $J=J(\Fp P)$.
Let $x$, $y\in P$. Then $(x-1)(y-1)\in $ $J^2$. Since
$(x-1)(y-1) = (xy-1) - (x-1) - (y-1)$, it follows that $xy-1$ and $(x-1)+(y-1)$ have
the same image in $J/J^2$. Thus the map $x \mapsto x-1$ induces
a surjective group homomorphism $P\to J/J^2$. Since the
right side is an abelian group, the kernel of this group homomorphism 
contains the commutator subgroup of $P$, and
since $\Fp$ has characteristic $p$, the kernel contains also $x^p$ for all $x\in P$.
Thus the map $x\mapsto x-1$ yields a surjective group homomorphism
$P/\Phi(P)\to$ $J/J^2$. Both sides are easily seen to have the
same dimension, equal to the rank of the elementary abelian $p$-group $P/\Phi(P)$.
\end{proof}

Note that the
unit element of $P/\Phi(P)$ is mapped to the zero element in $J(\Fp P)/J(\Fp P)^2$
in the Lemma \ref{PhiP-Lemma}.  We will further need the following
observation regarding the hyperfocal subgroup $[P,E]$ of $P\rtimes E$ in $P$.

\begin{Lemma} \label{PhiP-Lemma-2}
Let $P$ be a  finite  $p$-group and $E$ a finite group  of order
prime to $p$ which acts on $P$. The following are equivalent.
\begin{itemize}
\item[{\rm (i)}] We have $[P,E]=P$.
\item[{\rm (ii)}] We have $[P/\Phi(P), E] = P/\Phi(P)$.
\item[{\rm (iii)}] The $\Fp E$-module $J(\Fp P)/J(\Fp P)^2$ has no nonzero  
trivial direct  summand.
\item[{\rm (iv)}] The $kE$-module $J(kP)/J(kP)^2$ has no nonzero trivial
direct summand.
\end{itemize}
\end{Lemma}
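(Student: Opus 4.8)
The plan is to prove the cycle of implications (i) $\Rightarrow$ (ii) $\Rightarrow$ (iii) $\Rightarrow$ (iv) $\Rightarrow$ (i), exploiting the fact that $|E|$ is prime to $p$ so that $\Fp E$- and $kE$-modules are semisimple (Maschke), and so that for a semisimple $\Fp E$-module $V$ the condition ``$V$ has no nonzero trivial direct summand'' is equivalent to ``$[V,E] = V$'', equivalently $V^E = 0$, equivalently the trace/norm element of $E$ annihilates $V$. First I would record this dictionary as the basic tool: for any $\Fp E$-module $V$, writing $\bar N = \sum_{e \in E} e \in \Fp E$, one has $V = V^E \oplus [V,E]$ when $|E|$ is invertible in $\Fp$, since $\frac{1}{|E|}\bar N$ is an idempotent projecting onto $V^E$ with complement $[V,E] = \ker(\bar N \text{ acting})$; hence $V$ has a nonzero trivial summand iff $V^E \ne 0$ iff $[V,E] \ne V$. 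The same applies verbatim to $kE$-modules since $\chr(k) = p$.

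For (i) $\Leftrightarrow$ (ii): the quotient map $\pi : P \to P/\Phi(P)$ is $E$-equivariant, so $\pi([P,E]) = [P/\Phi(P),E]$; thus $[P,E] = P$ forces $[P/\Phi(P),E] = P/\Phi(P)$, giving (i) $\Rightarrow$ (ii). Conversely, $[P,E]$ is a normal subgroup of $P$ (it is $E$-invariant and normal, being generated by commutators, and $[P,E]\rtimes E \trianglelefteq P\rtimes E$), and (ii) says $[P,E]\Phi(P) = P$; since $\Phi(P)$ consists of non-generators of $P$, any subgroup $H$ with $H\Phi(P) = P$ satisfies $H = P$, so $[P,E] = P$, giving (ii) $\Rightarrow$ (i). The implication (ii) $\Leftrightarrow$ (iii) is now immediate from Lemma \ref{PhiP-Lemma}: that lemma gives an isomorphism of $\Fp E$-modules $P/\Phi(P) \cong J(\Fp P)/J(\Fp P)^2$, and applying the dictionary of the previous paragraph to each side, $[P/\Phi(P),E] = P/\Phi(P)$ holds iff $P/\Phi(P)$ has no nonzero trivial direct summand iff $J(\Fp P)/J(\Fp P)^2$ has no nonzero trivial direct summand. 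Finally (iii) $\Leftrightarrow$ (iv): one has $J(kP) = k \otimes_{\Fp} J(\Fp P)$ and $J(kP)^2 = k \otimes_{\Fp} J(\Fp P)^2$ (the radical of $kP$ is the augmentation ideal, which is defined over $\Fp$, and likewise its powers), so $J(kP)/J(kP)^2 \cong k \otimes_{\Fp} \bigl(J(\Fp P)/J(\Fp P)^2\bigr)$ as $kE$-modules; since $k/\Fp$ is a separable (indeed any) extension and $|E|$ is prime to $p$, a trivial direct summand of the $kE$-module persists or disappears together with one of the $\Fp E$-module — concretely, $\bigl(k \otimes_{\Fp} V\bigr)^E = k \otimes_{\Fp} V^E$, so one side is nonzero iff the other is. This closes the cycle.

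The step that requires the most care is the equivalence (iii) $\Leftrightarrow$ (iv), or rather making the scalar-extension argument airtight: one must check that $J(kP)^2 = k\tenk[\Fp] J(\Fp P)^2$ (not merely $\supseteq$), which follows because $J(\Fp P)^n/J(\Fp P)^{n+1}$ are finite-dimensional $\Fp$-spaces and tensoring with $k$ is exact, so the filtration and its associated graded base-change correctly; and one must note that fixed points commute with the flat base change $\Fp \to k$, i.e. $\Hom_{\Fp E}(\Fp, V)\tenk[\Fp] k \cong \Hom_{kE}(k, k\tenk[\Fp] V)$, which again is standard since $\Fp$ is finite-dimensional and $k$ is flat over $\Fp$. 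Everything else is a direct application of Maschke's theorem and Lemma \ref{PhiP-Lemma}; I do not anticipate any genuine obstacle, only the bookkeeping of keeping the $E$-action visible throughout.
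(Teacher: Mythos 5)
Your proposal is correct and follows essentially the same route as the paper: (i)$\Leftrightarrow$(ii) via the Frattini non-generator property, (ii)$\Leftrightarrow$(iii) via Lemma \ref{PhiP-Lemma} together with the equivalence ``no trivial summand $\Leftrightarrow$ zero fixed points $\Leftrightarrow$ $[V,E]=V$'' for semisimple modules, and (iii)$\Leftrightarrow$(iv) by scalar extension $\Fp\to k$ and the compatibility of fixed points with base change. The only cosmetic difference is that you establish the decomposition $V=V^E\oplus[V,E]$ directly with the averaging idempotent $\frac{1}{|E|}\sum_{e\in E}e$, whereas the paper cites the standard coprime-action decomposition $P/\Phi(P)=[P/\Phi(P),E]\times C_{P/\Phi(P)}(E)$; these are the same fact.
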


\begin{proof}
Clearly $[P/\Phi(P),E]$ is the image of $[P,E]$ under the canonical surjection 
$P\to P/\Phi(P)$, so (i) implies (ii) trivially. If $[P,E]$ is a proper subgroup of $P$,
then so is its image in $P/\Phi(P)$ since $\Phi(P)$ is the intersection of all maximal 
subgroups of $P$. Thus (ii) implies (i).  
By standard facts on coprime group actions, we have 
$P/\Phi(P)=[P/\Phi(P),E] \times C_{P/\Phi(P)}(E)$, thus (ii) is equivalent 
to the statement $C_{P/\Phi(P)}(E)=1$.
Under the isomorphism $P/\Phi(P)\cong$ $J(\Fp P)/J(\Fp P)^2$ from Lemma
\ref{PhiP-Lemma} this is equivalent to (iii).
Since $J(kP)=k\ten_{\Fp} J(\Fp P)$ and similarly for $J(kP)^2$, we have
$J(kP)/J(kP)^2 \cong $ $k\ten_\Fp J(\Fp P)/J(\Fp P)$ as $kE$-modules.
Setting $U=$ $J(\Fp P)/J(\Fp P)^2$, the equivalence of (iii) and (iv) follows from
the canonical  isomorphisms $k\ten_\Fp U^E\cong$ $k\ten_\Fp \Hom_{Fp E}(\Fp,U) \cong$
$\Hom_{kE}(k, k\ten_\Fp U) \cong$ $(k\ten_\Fp U)^E$, where for the second isomorphism
we make use of the well-known fact \cite[Corollary 1.12.11]{LiBookI} on scalar extensions
of homomorphism spaces.
\end{proof}

For convenience we draw attention to the following obvious fact.

\begin{Lemma}\label{Lemma-3}
Let $P$ be a finite $p$-group and $E$ a finite group of order prime to $p$ which
acts on $P$. Every element in $\HH^1(kP)^E$ has a representative in $\Der(kP)^E$.
\end{Lemma}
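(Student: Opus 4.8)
The plan is to exhibit $\HH^1(kP)^E$ as a quotient of $\Der(kP)^E$ and then to split off the inner derivations using an averaging argument, which is available because $|E|$ is invertible in $k$. First I would recall that the canonical surjection $\Der(kP)\to \HH^1(kP)$ has kernel $\IDer(kP)$, and that the $E$-action on $\Der(kP)$ is by $f\mapsto {}^e f = (e^{-1})\circ f\circ e$ (writing $e$ for the algebra automorphism of $kP$ induced by $e\in E$), which preserves $\IDer(kP)$ since ${}^e[c,-] = [{}^e c,-]$. Thus $E$ acts compatibly on the short exact sequence $0\to\IDer(kP)\to\Der(kP)\to\HH^1(kP)\to 0$.

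Taking $E$-fixed points is left exact, so we get an exact sequence $0\to\IDer(kP)^E\to\Der(kP)^E\to\HH^1(kP)^E$, and the only thing to check is that the last map is surjective, i.e.\ that $H^1(E;\IDer(kP))=0$, or more elementarily that every $E$-fixed class in $\HH^1(kP)$ lifts to an $E$-fixed derivation. Given a class $c\in\HH^1(kP)^E$ represented by some $f\in\Der(kP)$, the averaged map $\bar f = \tfrac{1}{|E|}\sum_{e\in E} {}^e f$ is an $E$-fixed derivation (a sum of derivations is a derivation, and the averaging is well-defined since $p\nmid|E|$), and since $c$ is $E$-fixed, each ${}^e f$ represents ${}^e c = c$, so $\bar f$ represents $\tfrac{1}{|E|}\sum_{e\in E} c = c$ as well. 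Hence $\bar f\in\Der(kP)^E$ is a representative of $c$, which is exactly the assertion.

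I do not expect any serious obstacle here: the argument is the standard Maschke-style averaging, and the only points requiring a word are that $\Der(kP)$ is closed under the $E$-action and under the linear combination used in averaging (both immediate from the definitions), and that inner derivations go to inner derivations under the $E$-action (which makes the averaged representative still represent the same cohomology class). One could alternatively phrase it as $\HH^1(kP)^E$ being a direct summand of $\HH^1(kP)$ as a $kE$-module and pulling back along the surjection $\Der(kP)\to\HH^1(kP)$, but the one-line averaging of a chosen representative is the cleanest formulation and is what I would write.
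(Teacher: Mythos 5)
Your proposal is correct and is essentially the paper's own argument: the paper simply notes that the $E$-stable surjection $\Der(kP)\to\HH^1(kP)$ stays surjective on $E$-fixed points because $E$ is a $p'$-group, and your averaging of a representative $\bar f=\tfrac{1}{|E|}\sum_{e\in E}{}^e f$ is exactly the standard mechanism behind that assertion. No issues.
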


\begin{proof}
The canonical surjection $\Der(kP)\to \HH^1(kP)$ is $E$-stable, so 
remains surjective upon taking $E$-fixed points as $E$ is a $p'$-group.
\end{proof}

We will need the following fact from \cite{KL-Frobenius}.

\begin{Lemma}[{cf. \cite[Lemma 5.1]{KL-Frobenius}}]  \label{fpf-Lemma}
Let $P$ be a finite $p$-group and $\alpha$ an automorphism of $P$ with no 
nontrivial fixed point. Then the $kP$-module $(kP)_\alpha$, with $u\in P$ 
acting on $x\in P$  by $ux\alpha(u)^{-1}$, is projective.
\end{Lemma}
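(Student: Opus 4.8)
The plan is to show that $(kP)_\alpha$ is in fact a \emph{free} $kP$-module of rank one. Observe first that $(kP)_\alpha$ is the permutation $kP$-module $kX$ on the set $X=P$ equipped with the twisted conjugation action $u * x = u x \alpha(u)^{-1}$ of $P$ on $X$: a direct check using $\alpha(uv)=\alpha(u)\alpha(v)$ shows that $*$ is genuinely a left action of $P$ on $X$, and by definition the $k$-span of $X$ with this action is precisely $(kP)_\alpha$. It therefore suffices to prove that $P$ acts transitively on $X$ with trivial point stabilisers, for then $X\cong P$ as a $P$-set, and hence $(kP)_\alpha=kX\cong kP$ is free, in particular projective.

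To establish transitivity, consider the orbit of the identity element $1\in X$: it equals the image of the map $\psi : P\to P$ given by $\psi(u)=u\,\alpha(u)^{-1}$. The key point is that $\psi$ is injective, and this is exactly where the hypothesis on $\alpha$ enters. Indeed, if $\psi(u)=\psi(v)$, then $v^{-1}u = \alpha(v)^{-1}\alpha(u)=\alpha(v^{-1}u)$, so $v^{-1}u$ is a fixed point of $\alpha$ and hence trivial, giving $u=v$. Since $P$ is finite, $\psi$ is then bijective, so the $P$-orbit of $1$ is all of $X$; that is, $P$ acts transitively on $X$. As $|X|=|P|$, the stabiliser of any point of $X$ has order $|P|/|X|=1$, so $X\cong P$ as a $P$-set and $(kP)_\alpha\cong kP$.

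There is no serious obstacle here: the one idea needed is to recognise $(kP)_\alpha$ as a permutation module on $P$ under twisted conjugation, and the only nontrivial step is the injectivity of $\psi$, which uses the fixed-point-free assumption in a single line. The only point requiring a little care is bookkeeping with the conventions — left versus right action, and the twist by $\alpha$ rather than $\alpha^{-1}$ — in order to confirm that $*$ really is a group action, which a short computation settles.
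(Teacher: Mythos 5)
Your proof is correct and follows the same route as the paper: both arguments show that under the twisted action the $P$-orbit of $1$ is all of $P$ (i.e.\ that $u\mapsto u\alpha(u)^{-1}$ is surjective), so that $(kP)_\alpha$ is the regular, hence free, module. You merely spell out the injectivity of $u\mapsto u\alpha(u)^{-1}$ and the triviality of stabilisers, which the paper leaves implicit.
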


\begin{proof}
The hypothesis on $\alpha$ implies that if $u$ runs over all elements of $P$, then
so does $u\alpha(u)^{-1}$. Thus the given action of $P$ on itself is transitive 
because the $P$-orbit of $1$ is $P$. The Lemma follows.
\end{proof}

\section{The K\"unneth formula and solvability of $\HH^1$} \label{kuenneth-section} 

Let $k$ be a field. For  $A$ an associative unital $k$-algebra  and $m$ a positive 
integer  we denote as before by $\Der_m(A)$ the space of derivations on $A$ 
with image contained in $J(A)^m$. Given two algebras $A$, $B$, the
solvability of $\HH^1(A)$ and $\HH^1(B)$ does not necessarily imply the solvability 
of $\HH^1(A\tenk B)$.  The following observation implies that the slightly stronger 
condition from Corollary \ref{rad-Cor}   (iii) does extend to tensor products.

\begin{Proposition} \label{Kuenneth-prop}
Let $A$, $B$ be two associative unital $k$-algebras.  Suppose that the canonical 
maps $\Der_1(A)\to$ $\HH^1(A)$ and $\Der_1(B)\to$ $\HH^1(B)$  are surjective.
Then the map $\Der_1(A\tenk B)\to$ $\HH^1(A\tenk B)$ is surjective.
Suppose further that $[\Der_1(A),\Der_1(A)]\subseteq$ $\Der_2(A) + \IDer(A)$ and 
that  $[\Der_1(B), \Der_1(B)] \subseteq$ $\Der_2(B)+\IDer(B)$.  Then 
$$[\Der_1(A\tenk B), \Der_1(A\tenk B)] \subseteq \Der_2(A\tenk B) + \IDer(A\tenk B).$$
In particular, the Lie algebra $\HH^1(A\tenk B)$ is solvable.
\end{Proposition}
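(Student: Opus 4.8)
The plan is to decompose every derivation on $A \tenk B$ in terms of derivations on the factors, using the standard Künneth-type description of $\HH^1$ of a tensor product together with the hypotheses that make $\Der_1 \to \HH^1$ surjective on each factor. First I would recall that for $f \in \Der(A)$ the map $f \tenk \Id_B$ is a derivation on $A \tenk B$, and similarly $\Id_A \tenk g$ for $g \in \Der(B)$; moreover if $a \in A$ and $b \in B$ then the inner derivation $[a \tenk b, -]$ on $A \tenk B$ can be rewritten via the Leibniz rule as a sum of terms built from the inner derivations $[a,-]$ on $A$ and $[b,-]$ on $B$ together with multiplications by $a$ and $b$ — the point being that $\IDer(A \tenk B)$ is spanned, modulo the ``product'' derivations, by $[1 \tenk b, -]$ and $[a \tenk 1, -]$. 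The key structural input is the known fact (essentially Weibel \cite[9.2.1]{Weibel} applied twice, or the explicit bimodule computation) that $\HH^1(A \tenk B) \cong \big(\HH^1(A) \tenk \HH^0(B)\big) \oplus \big(\HH^0(A) \tenk \HH^1(B)\big)$, i.e. every class is represented by a sum $f \tenk \Id_B + \Id_A \tenk g$ with $f \in \Der(A)$, $g \in \Der(B)$.

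Next I would upgrade this to the radical filtration. Since $J(A \tenk B) \supseteq J(A) \tenk B + A \tenk J(B)$ always, and since $(f \tenk \Id_B)(A \tenk B) \subseteq f(A) \tenk B$, a product derivation $f \tenk \Id_B$ with $f \in \Der_1(A)$ lies in $\Der_1(A \tenk B)$; likewise $\Id_A \tenk g \in \Der_1(A \tenk B)$ for $g \in \Der_1(B)$. Given an arbitrary class in $\HH^1(A \tenk B)$, write it as $f \tenk \Id_B + \Id_A \tenk g$ with $f,g$ derivations; using the surjectivity of $\Der_1(A) \to \HH^1(A)$ I may adjust $f$ by an inner derivation of $A$ to land in $\Der_1(A)$, and the inner derivation $[a,-] \tenk \Id_B$ equals the inner derivation $[a \tenk 1, -]$ on $A \tenk B$, so this adjustment does not change the class in $\HH^1(A \tenk B)$; symmetrically for $g$. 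This proves surjectivity of $\Der_1(A \tenk B) \to \HH^1(A \tenk B)$.

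For the bracket condition, I would compute $[\,f_1 \tenk \Id_B + \Id_A \tenk g_1,\ f_2 \tenk \Id_B + \Id_A \tenk g_2\,]$ for $f_i \in \Der_1(A)$, $g_i \in \Der_1(B)$. The cross terms $[f_i \tenk \Id_B, \Id_A \tenk g_j]$ vanish because $f_i \tenk \Id_B$ and $\Id_A \tenk g_j$ commute as operators on $A \tenk B$. The diagonal terms give $[f_1,f_2] \tenk \Id_B$ and $\Id_A \tenk [g_1,g_2]$. By hypothesis $[f_1,f_2] \in \Der_2(A) + \IDer(A)$, so $[f_1,f_2] \tenk \Id_B \in \Der_2(A) \tenk \Id_B + \IDer(A) \tenk \Id_B$; the first summand lies in $\Der_2(A \tenk B)$ since $\Der_2(A) \tenk B \subseteq$ maps into $J(A)^2 \tenk B \subseteq J(A \tenk B)^2$ (here $J(A \tenk B)^2 \supseteq J(A)^2 \tenk B$), and the second lies in $\IDer(A \tenk B)$ by the identity $[a,-] \tenk \Id_B = [a \tenk 1, -]$. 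The same applies to the $B$-term, yielding the claimed inclusion $[\Der_1(A \tenk B), \Der_1(A \tenk B)] \subseteq \Der_2(A \tenk B) + \IDer(A \tenk B)$. Solvability of $\HH^1(A \tenk B)$ then follows immediately from Corollary \ref{rad-Cor}(iii) since $\Der_1(A \tenk B) \to \HH^1(A \tenk B)$ is surjective.

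The main obstacle is the bookkeeping around the radical: one must be careful that $\Der_2$ is defined via $J(A \tenk B)^2$, not $(J(A) \tenk B + A \tenk J(B))^2$, and in general these differ unless one of $A/J(A)$, $B/J(B)$ is separable — but in all intended applications the factors are (twisted) group algebras of $p$-groups times $p'$-groups, where $J(A \tenk B) = J(A) \tenk B + A \tenk J(B)$ holds because the semisimple quotients are separable, so $J(A \tenk B)^2 = J(A)^2 \tenk B + J(A) \tenk J(B) + A \tenk J(B)^2$ and the inclusions above are clean; I would either invoke this separability hypothesis explicitly or note that the weaker containment $J(A)^2 \tenk B \subseteq J(A \tenk B)^2$ needed above holds unconditionally, which is in fact all the argument uses.
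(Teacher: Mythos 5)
There is a genuine gap at the first step of your argument: you identify the Künneth summands $\HH^1(A)\tenk \HH^0(B)$ and $\HH^0(A)\tenk \HH^1(B)$ with $\HH^1(A)\ten 1_B$ and $1_A\ten \HH^1(B)$, claiming that every class in $\HH^1(A\tenk B)$ is represented by a derivation of the form $f\tenk \Id_B+\Id_A\tenk g$. But $\HH^0(A)\cong Z(A)$ and $\HH^0(B)\cong Z(B)$, and the derivation attached to $f\ten w$ (resp.\ $z\ten g$) is $a\ten b\mapsto f(a)\ten bw$ (resp.\ $a\ten b\mapsto az\ten g(b)$) with $w\in Z(B)$, $z\in Z(A)$ arbitrary central elements, not just scalars. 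Your special form only accounts for a proper subspace in general: for instance, if $A=B=kP$ with $P$ cyclic of order $p$, then $\HH^1(A\tenk B)=\Der(k(P\times P))$ has dimension $2p^2$, whereas derivations of the form $f\tenk\Id_B+\Id_A\tenk g$ span a space of dimension at most $2p$. Consequently your surjectivity argument only treats the special classes (this half is easily repaired by carrying the central factors along, since $f\ten w$ still has image in $J(A)\ten B\subseteq J(A\tenk B)$ when $f\in\Der_1(A)$), but the bracket computation is missing exactly the terms that do not vanish.

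Indeed, with the correct representatives the cross brackets do not commute: one has $[f\ten w,z\ten g]=f(z)\ten w\cdot g-z\cdot f\ten g(w)$ (formula \ref{KuennethLie3}), which is nonzero in general; your commutation argument applies only when $w=1_B$ and $z=1_A$. The missing ingredient is the estimate of Lemma \ref{KuennethRad}: since $\Im(f)\subseteq J(A)$ and $\Im(g)\subseteq J(B)$, this cross bracket has image in $J(A)\ten J(B)\subseteq (J(A)\ten B)(A\ten J(B))\subseteq J(A\tenk B)^2$, hence lies in $\Der_2(A\tenk B)$; the pure brackets are then handled by $[f\ten w,f'\ten w']=[f,f']\ten ww'$ and $[z\ten g,z'\ten g']=zz'\ten[g,g']$ together with the hypotheses on $A$ and $B$ and the identity $[a,-]\ten w=[a\ten w,-]$ for $w\in Z(B)$. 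You should also make explicit the reduction of an arbitrary pair of elements of $\Der_1(A\tenk B)$ to such Künneth-form representatives modulo $\IDer(A\tenk B)$, using that $\IDer$ is a Lie ideal and that $[\Der_1,\Der_2]\subseteq\Der_2$. Your closing remark on the radical bookkeeping (that $J(A)^2\ten B\subseteq J(A\tenk B)^2$ suffices) is correct, but it does not substitute for the missing cross-term estimate.
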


The proof of this Proposition is based on the  K\"unneth formula
\begin{Statement} \label{Kuenneth}
$$\HH^1(A\tenk B) \cong  
\ \mathrm{Z}(A)\tenk \HH^1(B)\ \oplus\ \HH^1(A)\tenk \mathrm{Z}(B),$$
\end{Statement}

\noindent 
where we use the canonical identifications  $\HH^0(A)\cong$  $\mathrm{Z}(A)$ 
and  $\HH^0(B)\cong$ $\mathrm{Z}(B)$. This formula extends in the obvious
way to tensor products of more than two algebras.
The K\"unneth  isomorphism \ref{Kuenneth}
 is induced by 
 with the map sending $z \ten g$ to the derivation
$$a\ten b \mapsto az \ten g(b)$$
on $A\tenk B$, where $a\in$ $A$, $b\in$ $ B$, $z\in \mathrm{Z}(A)$ and $g$ is a derivation on $B$,
together with the map sending $f \ten w$ to the derivation
$$a \ten b \mapsto f(a) \ten bw$$
on $A\tenk B$,  where $f$ is a derivation on $A$ and 
$w\in \mathrm{Z}(B)$.
A trivial verification shows that 
 if $g=$ $[d,-]$ for some $d\in B$  is an inner derivation on $B$,
then the derivation on $A\tenk B$ corresponding to $z\ten g$ is inner, equal to 
$[z\ten d, -]$.
Similarly, if $f=$ $[c,-]$ for some $c\in A$ is an inner derivation 
on $A$,  then the derivation on $A\tenk B$ corresponding to $f\ten w$ is inner, 
and equal to $[c\ten w,-]$.

The Lie bracket can be followed through the K\"unneth isomorphism as follows. 
given two derivations $g$, $g'$ on $B$ and $z$, $z'\in \mathrm{Z}(A)$, the 
Lie bracket of the derivations corresponding to  $z\ten g$, $z'\ten g'$ is  given by
\begin{Statement} \label{KuennethLie2}
$$[z\ten g, z'\ten g'] = zz' \ten [g,g'],$$
or explicitly, the right side is  the map
$$a \ten b \mapsto  azz' \ten [g,g'](b).$$
\end{Statement}
Similarly, given two derivations $f$, $f'$ on $A$ and $w$, $w'\in \mathrm{Z}(B)$, 
and  identifying $f\ten w$ with the derivation $a\ten b\mapsto f(a)\ten bw$,
the Lie  bracket of the derivations $f\ten w$, $f'\ten w'$ is given  by 
\begin{Statement} \label{KuennethLie1}
$$[f\ten w, f'\ten w'] = [f,f'] \ten ww',$$
or explicitly, the right side is  the map
$$a \ten b \mapsto [f,f'](a) \ten bww'. $$ 
\end{Statement}
The formulas \ref{KuennethLie2} and \ref{KuennethLie1}
show that the two summands in the K\"unneth decomposition \ref{Kuenneth} 
are both Lie subalgebras. Applied with $w=w'=1_B$ and $z=z'=1_A$, these 
formulas show that $\HH^1(A)$ and $\HH^1(B)$ are isomorphic to Lie subalgebras
of $\HH^1(A\tenk B)$, so if one of $\HH^1(A)$, $\HH^1(B)$ is not solvable, then
neither is $\HH^1(A\tenk B)$. Note though that the solvability of both $\HH^1(A)$,
$\HH^1(B)$ need not imply the solvability of $\HH^1(A\tenk B)$. 
By Lemma \ref{ZA-derivations} we have $f(z)\in$ $Z(A)$ and $g(w)\in$ $Z(B)$. We 
denote by $z\cdot f$ (resp. $w\cdot g$) the derivation on $A$ (resp. on $B$) given
by $(z\cdot f)(a)=$ $zf(a)$ (resp. $(w\cdot g)(b)=$ $wg(b)$).
The Lie bracket  $[f \ten w$, $z \ten g]$ is given by
\begin{Statement} \label{KuennethLie3}
$$[f\ten w, z\ten g] =    f(z) \ten w\cdot g - z\cdot f \ten g(w),$$ 
or equivalently, the right side is the map
$$a\ten b \mapsto f(az)\ten wg(b) - f(a)z\ten g(bw) =  af(z)\ten wg(b)  - zf(a) \ten bg(w).$$
In particular, we have 
$$[f \ten 1, z\ten g]= f(z) \ten g.$$ 
\end{Statement}

\noindent
Indeed, the first formula  uses the Leibniz rule applied to the terms $f(az)$ and $g(bw)$,
followed by cancelling two terms, and the last formula follows from applying
this to $w=1$ and using $g(1)=0$. 
The formula \ref{KuennethLie3} shows that the two summands in the K\"unneth formula
do not necessarily commute,
so this is not, in general, a direct product of Lie algebras. We note the following
consequence of this formula, used in the proof of Proposition \ref{Kuenneth-prop}.

\begin{Lemma} \label{KuennethRad}
Let $A$, $B$ be finite-dimensional associative unital $k$-algebras, 
and  $z\in \mathrm{Z}(A)$,  $w\in \mathrm{Z}(B)$, 
$f$ a derivation on $A$, and $g$ a derivation on $B$. 
Suppose that $\Im(f)\subseteq J(A)$ and $\Im(g) \subseteq J(B)$. Then 
$[f\ten w, z\ten g]$, regarded as a derivation on $A\tenk B$,
 has image contained in $J(A\tenk B)^2$. 
\end{Lemma}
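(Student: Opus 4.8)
The plan is to read off the bracket $[f\ten w, z\ten g]$ from the explicit formula \ref{KuennethLie3} and then estimate the radical filtration degree of the resulting derivation on $A\tenk B$. Recall from \ref{KuennethLie3} that, as a map on $A\tenk B$,
\begin{Statement}\label{aux-bracket}
$$[f\ten w, z\ten g]\colon\ a\ten b\ \mapsto\ af(z)\ten wg(b)\ -\ zf(a)\ten bg(w).$$
\end{Statement}
So it suffices to show that each of the two maps $a\ten b\mapsto af(z)\ten wg(b)$ and $a\ten b\mapsto zf(a)\ten bg(w)$ has image inside $J(A\tenk B)^2$, since $J(A\tenk B)^2$ is a subspace.

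First I would recall the standard description of the radical of a tensor product: for finite-dimensional $k$-algebras one has $J(A\tenk B)\supseteq J(A)\tenk B + A\tenk J(B)$, and consequently
$$J(A\tenk B)^2\ \supseteq\ J(A)^2\tenk B\ +\ J(A)\tenk J(B)\ +\ A\tenk J(B)^2.$$
In particular $J(A)\tenk J(B)\subseteq J(A\tenk B)^2$; that single containment is all I need. (Strictly speaking I only need $J(A)\tenk B + A\tenk J(B)\subseteq J(A\tenk B)$, which is elementary: $(J(A)\tenk B + A\tenk J(B))$ is a nilpotent ideal — it is the kernel of the surjection $A\tenk B\to (A/J(A))\tenk(B/J(B))$ onto a semisimple algebra — hence contained in $J(A\tenk B)$, and then its square lies in $J(A\tenk B)^2$.)

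Now I apply the hypotheses. Since $\Im(f)\subseteq J(A)$, for every $a\in A$ we have $f(a)\in J(A)$, and also $f(z)\in J(A)$; likewise $g(b)\in J(B)$ and $g(w)\in J(B)$ for every $b\in B$, using $\Im(g)\subseteq J(B)$. Hence $af(z)\ten wg(b)\in J(A)\tenk J(B)$ and $zf(a)\ten bg(w)\in J(A)\tenk J(B)$, using that $J(A)$ is a two-sided ideal of $A$ (so $af(z)\in J(A)$) and likewise $wg(b), bg(w)\in J(B)$. By the containment above both terms lie in $J(A\tenk B)^2$, and therefore so does their difference, which by \ref{aux-bracket} is the value of $[f\ten w, z\ten g]$ at $a\ten b$. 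Since the elements $a\ten b$ span $A\tenk B$ over $k$ and the map is $k$-linear, the image of $[f\ten w, z\ten g]$ is contained in $J(A\tenk B)^2$, as claimed.

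There is no real obstacle here: the entire content is the formula \ref{KuennethLie3}, which is already established in the excerpt, together with the elementary fact that $J(A)\tenk J(B)$ sits inside $J(A\tenk B)^2$. The only point that requires a word of care is making sure one is allowed to invoke $f(z)\in J(A)$ and $g(w)\in J(B)$ — but these follow immediately from $\Im(f)\subseteq J(A)$ and $\Im(g)\subseteq J(B)$, so even the appeal to Lemma \ref{ZA-derivations} (which only tells us $f(z)\in Z(A)$) is not needed for this particular statement; it is the radical hypothesis, not the centrality, that does the work.
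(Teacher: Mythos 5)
Your proof is correct and follows essentially the same route as the paper: read the bracket off formula \ref{KuennethLie3}, observe its values lie in $J(A)\tenk J(B)$, and use $J(A)\tenk J(B)\subseteq (J(A)\tenk B)(A\tenk J(B))\subseteq J(A\tenk B)^2$. One small caveat on your parenthetical: over a general field $(A/J(A))\tenk (B/J(B))$ need not be semisimple, but this does not matter, since the nilpotency of $J(A)\tenk B + A\tenk J(B)$, which you also invoke, already gives the containment in $J(A\tenk B)$.
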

 
\begin{proof}
The hypotheses and formula \ref{KuennethLie3} imply that the image of 
the derivation $[f\ten w, z\ten g]$ on $AA\tenk B$ is contained in 
$J(A)\ten J(B) \subseteq$  $(J(A)\ten B)(A\ten J(B))\subseteq$ $J(A\tenk B)^2$.
\end{proof}
 
\begin{proof}[Proof of Proposition \ref{Kuenneth-prop}]
Given a derivation $f$ on $A$ with image contained in $J(A)$ and an element 
$w\in \mathrm{Z}(B)$,  the derivation on $A\tenk B$ corresponding to $f\ten w$ 
has image contained in $J(A)\tenk B \subseteq$ $J(A\tenk B)$. Similarly, given a 
derivation $g$ on $B$ and $z\in \mathrm{Z}(A)$, the derivation on $A\tenk B$ 
corresponding to $z\ten g$ has image contained in $J(A\tenk B)$.
The K\"unneth formula \ref{Kuenneth} implies the first statement.
The second statement follows from combining
the formulas \ref{KuennethLie1}, \ref{KuennethLie2} and Lemma \ref{KuennethRad}.
The solvability of $\HH^1(A\tenk B)$ follows from Corollary \ref{rad-Cor}. 
\end{proof}

\begin{Lemma} \label{KuennethRad2}
Let $A$, $B$ be finite-dimensional associative unital $k$-algebras.
Suppose  that the canonical map $\Der_1(B)\to$ $\HH^1(B)$ is surjective.
Then the space
$$\ Z(A) \tenk \HH^1(B) \ \oplus\ \HH^1(A)\tenk J(Z(B)),$$ 
identified to its image in $\HH^1(A\tenk B)$, 
is a Lie ideal in  $\HH^1(A\tenk B)$. In particular, if $\HH^1(A)$ is non-zero, then
$\HH^1(A\tenk B)$ is not  a simple Lie algebra. 
\end{Lemma}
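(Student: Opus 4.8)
The plan is to transport everything through the K\"unneth isomorphism \ref{Kuenneth}, writing $\HH^1(A\tenk B)=X\oplus Y$ with $X=\mathrm{Z}(A)\tenk\HH^1(B)$ and $Y=\HH^1(A)\tenk\mathrm{Z}(B)$, so that the subspace in question is $I=X\oplus Y_0$ where $Y_0=\HH^1(A)\tenk J(\mathrm{Z}(B))$. Since $X$, $Y$ and $Y_0$ are spanned by elementary tensors and the Lie bracket is bilinear, it suffices to check that the bracket of an elementary tensor from $X$ or from $Y_0$ with one from $X$ or from $Y$ again lies in $I$. This reduces to a short case analysis using the three bracket formulas \ref{KuennethLie2}, \ref{KuennethLie1} and \ref{KuennethLie3} recorded above, together with the facts that $z\cdot f$ and $w\cdot g$ are again derivations (on $A$, resp. on $B$), that $J(\mathrm{Z}(B))$ is an ideal of $\mathrm{Z}(B)$, and that $f(z)\in\mathrm{Z}(A)$ and $g(w)\in\mathrm{Z}(B)$ by Lemma \ref{ZA-derivations}.

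The two straightforward cases are $[X,X]$ and $[Y_0,Y]$. For the first, \ref{KuennethLie2} gives $[z\ten g,z'\ten g']=zz'\ten[g,g']\in X\subseteq I$. For the second, \ref{KuennethLie1} gives $[f\ten w_0,f'\ten w']=[f,f']\ten w_0w'$ with $w_0\in J(\mathrm{Z}(B))$, so $w_0w'\in J(\mathrm{Z}(B))$ and the bracket lies in $Y_0\subseteq I$. The essential case is $[X,Y]$, which by antisymmetry also handles $[Y_0,X]$. By \ref{KuennethLie3} we have $[z\ten g,f\ten w]=-f(z)\ten w\cdot g+z\cdot f\ten g(w)$; the first summand lies in $\mathrm{Z}(A)\tenk\HH^1(B)=X$, so the only question is whether $z\cdot f\ten g(w)$ lies in $\HH^1(A)\tenk J(\mathrm{Z}(B))=Y_0$, i.e.\ whether $g(w)\in J(\mathrm{Z}(B))$. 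This is exactly where the hypothesis on $B$ is used: an inner derivation of $B$ vanishes on $\mathrm{Z}(B)$, so $g(w)$ depends only on the class of $g$ in $\HH^1(B)$, and we may therefore choose the representative $g$ in $\Der_1(B)$; then $g(w)\in J(B)\cap\mathrm{Z}(B)$, which is a set of nilpotent central elements and hence lies in the nilradical of the finite-dimensional commutative algebra $\mathrm{Z}(B)$, that is, in $J(\mathrm{Z}(B))$. Thus $z\cdot f\ten g(w)\in Y_0\subseteq I$, and $I$ is a Lie ideal.

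For the last assertion, the K\"unneth identification yields $\HH^1(A\tenk B)/I\cong\HH^1(A)\tenk\bigl(\mathrm{Z}(B)/J(\mathrm{Z}(B))\bigr)$, and $\mathrm{Z}(B)/J(\mathrm{Z}(B))\neq 0$; hence if $\HH^1(A)\neq 0$ then $I$ is a proper Lie ideal, and it is moreover non-zero as soon as $\HH^1(B)\neq 0$ or $J(\mathrm{Z}(B))\neq 0$, which gives the non-simplicity of $\HH^1(A\tenk B)$. (In the remaining degenerate case $\HH^1(B)=0=J(\mathrm{Z}(B))$ one has $\HH^1(A\tenk B)\cong\HH^1(A)\tenk\mathrm{Z}(B)$, which by \ref{KuennethLie1} and the primitive idempotents of the separable algebra $\mathrm{Z}(B)$ splits as a direct product of the Lie algebras $\HH^1(A)\tenk K_i$ over the simple components $K_i$ of $\mathrm{Z}(B)$, so $\HH^1(A\tenk B)$ is still non-simple unless $\mathrm{Z}(B)$ is a field.)

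The only genuine obstacle is to pinpoint where the surjectivity hypothesis is used, namely the step $g(w)\in J(B)\cap\mathrm{Z}(B)$ in the case $[X,Y]$; and this hypothesis is really needed. For instance, for $B=k[x]/(x^p)$ the non-inner derivation $g=d/dx$ sends $x$ to $1_B\notin J(B)$, so for a non-inner derivation $f$ on $A$ the bracket $[1_A\ten g,\,f\ten x]$ equals $f\ten 1_B$, which lies outside $I$; so without the hypothesis $I$ need not be an ideal. Everything else is routine bookkeeping with the bracket formulas.
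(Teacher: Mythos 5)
Your proof is correct and takes essentially the same route as the paper: transport everything through the K\"unneth isomorphism \ref{Kuenneth}, apply the bracket formulas \ref{KuennethLie2}, \ref{KuennethLie1}, \ref{KuennethLie3}, and use the surjectivity of $\Der_1(B)\to\HH^1(B)$ (plus the vanishing of inner derivations on the centre) to get $g(w)\in Z(B)\cap J(B)\subseteq J(Z(B))$ in the mixed case. Your parenthetical treatment of the degenerate case $\HH^1(B)=0=J(Z(B))$ is a reasonable extra precision: the paper's proof only checks that the displayed ideal is proper when $\HH^1(A)\neq 0$, not that it is nonzero.
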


\begin{proof}
This follows from the formulas \ref{KuennethLie1} and \ref{KuennethLie2},
together with the fact that if $g$ is a derivation on $B$ with image contained
in $J(B)$, then, by Lemma \ref{ZA-derivations},   $g$ sends $Z(B)$ to 
$Z(B)\cap J(B)=$ $J(Z(B))$.  If $\HH^1(A)\neq 0$, then the space displayed in 
the statement does not contain  $\HH^1(A)\ten 1_B$,  so this is a proper ideal. 
\end{proof}
  
If the algebra $B$ is separable, then the K\"unneth formula yields an isomorphism
\begin{Statement}\label{Kuenneth-separable}
$$\HH^1(A\tenk B) \cong \HH^1(A) \tenk  \mathrm{Z}(B).$$
\end{Statement}
 
We will need one further special case of the K\"unneth formula for finite group 
algebras.   Given finite groups $G$, $H$, a $kG$-module $U$ and a $kH$-module 
$V$, we  have a natural graded $k$-linear isomorphism
\begin{Statement} \label{Kuenneth-groups}
$$\H^*(G\times H; U\tenk V) \cong \H^*(G; U) \tenk  \H^*(H; V),$$
\end{Statement}

\noindent where the grading on the right side is the total grading. 
Explicitly, for any positive integer $n$ we have
\begin{Statement} \label{Kuenneth-groups-n}
$$\H^n(G\times H; U\tenk V)\ \cong\ \oplus_{(i,j)}\ \H^i(G; U) \tenk \H^j(H; V),$$
\end{Statement} 

\noindent where $(i,j)$ runs over all pairs of non-negative integers such that $i+j=n$.
See for instance \cite[Theorem 3.5.6]{BenI}. For $n=1$, this yields an isomorphism
\begin{Statement} \label{Kuenneth-groups-1}
$$\H^1(G\times H; U\tenk V) \cong U^G\tenk \H^1(H; V)\ \oplus\ \H^1(G; U) \tenk V^G.$$
\end{Statement} 
Under this isomorphism an element in $U^G\tenk \H^1(H;V)$ given
by $u\ten \tau$ for some $u\in U^G$ and some $\tau\in Z^1(H; V)$  corresponds
to the element in $\H^1(G\times H; U\tenk V)$ given by the $1$-cocycle
$(x,y)\mapsto u \ten \tau(y)$. The analogous statement holds for elements
in the second summand.

\section{Calculations in twisted group algebras}  \label{twisted-Section} 

One of the standard tools for calculating the Hochschild cohomology of
a finite group algebra is the centraliser decomposition, which is shown in
\cite[Lemma 3.5]{Wi04} to carry over to crossed products, and in
particular therefore to twisted group algebras. We review very briefly what
we will need  in this paper; for more background material see for instance
\cite[Section 1.2]{LiBookI}.

Let $G$ be a finite group and let $k$ be a field. Let $\alpha\in Z^2(G; k^\times)$.
The twisted group algebra $k_\alpha G$ has a $k$-basis $\{\hat g\ |\ g\in G\}$
in bijection with the elements of $G$. The multiplication in $k_\alpha G$ is given by
$\hat g \hat h = $ $\alpha(g,h) \widehat{gh}$, for $g$, $h\in G$, extended bilinearly
to $k_\alpha G$.
The identity element in $k_\alpha G$ is $\alpha(1,1)^{-1}\hat 1$, and hence, for
$g\in G$, the inverse of $\hat g$ in $k_\alpha G$ is given by 
\begin{Statement}\label{inverse}
$$\hat g^{-1} = \alpha(1,1)^{-1}\alpha(g,g^{-1})^{-1} \widehat{g^{-1}},$$
\end{Statement}

\noindent where $g^{-1}$ is the inverse of $g$ in $G$. The isomorphism
class of $k_\alpha G$  depends only on the class of $\alpha$ in $H^2(G; k^\times)$,
and we may therefore assume that $\alpha$ is {\em normalised}; that is,
$\alpha(g,1)=1=\alpha(1,g)$ for all $g\in G$. This is
equivalent to requiring that $\hat 1$ remains the identity element in $k_\alpha G$.
We note that if $\alpha$ is normalised, then the inverse of $\hat g$ in $k_\alpha G$
is equal to $\hat g^{-1}=$ $\alpha(g,g^{-1})^{-1} \widehat{g^{-1}}$. 
A short calculation shows that 
the conjugation action in $k_\alpha G$ is given by
\begin{Statement} \label{conj-action-1}
$${^{\hat g}{\hat h}} = \hat g\hat h \hat g^{-1} = \lambda(g,h)  \widehat{{^gh}},$$
\end{Statement}

\noindent where $g$, $h\in G$ and where $\lambda(g,h)\in$ $k^\times$ is given by 
the formula
$$\lambda(g,h) = 
\alpha(g,h)\alpha(gh, g^{-1})\alpha(g,g^{-1})^{-1}\alpha(1,1)^{-1}.$$ 
In particular, we have $\lambda(1,h)=1=\lambda(g,1)$.
If $N$ is a normal subgroup of $G$ and $\alpha\in Z^2(G/N; k^\times)$ inflated to
$G$ via the canonical surjection, still denoted by $\alpha$, 
and if we assume in addition that $\alpha$
is normalised, then for $g\in N$ and $h\in G$ the above formula yields
$\lambda(g,h)=1$, hence in that case we have
\begin{Statement} \label{conj-action-2}
$$^{\hat g}{\hat h} =  \widehat{{^gh}}.$$
\end{Statement}
For $M$ a $k_\alpha G$-$k_\alpha G$-bimodule, we have a standard adjunction 
isomorphism
\begin{Statement}\label{HH-adjunction}
$$\HH^*(k_\alpha G; M) \cong \H^*(G; M),$$
\end{Statement}

\noindent where $g\in G$ acts on $m\in M$ by ${^gm}=\hat g m\hat g^{-1}$, 
having checked that this is well-defined. Note that while $M$ is considered as a 
$kG$-module, the cohomology $\H^*(G; M)$ still depends on $\alpha$, even 
though $\alpha$ does not explicitly appear in the notation. In particular, with 
$M=k_\alpha G$,   the group $G$ acts on $k_\alpha G$  with $g\in G$ acting
by conjugation with $\hat g$, and we have a graded isomorphism 
$\HH^*(k_\alpha G)\cong$ $\H^*(G; k_\alpha G)$, which is the first step towards
the  centraliser decomposition
of $\HH^*(k_\alpha G)$ in the proof of \cite[Lemma 3.5]{Wi04}. We will
need the isomorphism \ref{HH-adjunction} 
 in degree $1$, where this is given explicitly as follows.

\begin{Lemma} \label{twisted1}
Let $G$ be a finite group, $k$ a field, and $\alpha\in$ $Z^2(G; k^\times)$.
Let $M$ be a $k_\alpha G$-$k_\alpha G$-bimodule.
Let $d : k_\alpha G\to M$ be a $k$-linear map and $\tau : G\to M$ a map
such that
$d(\hat g)= \tau(g)\hat g$ 
for all $g\in G$. 
Then $d$ is a derivation if and only if $\tau$ is a $1$-cocycle. Moreover,  the 
correspondence $\tau\mapsto d$ induces an isomorphism 
$\H^1(G;  M)\cong$ $\HH^1(k_\alpha G; M)$.
\end{Lemma}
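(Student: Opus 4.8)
The plan is to verify directly that the correspondence $d \leftrightarrow \tau$ is a bijection between $k$-linear maps $k_\alpha G \to M$ and arbitrary maps $G \to M$, and then to match the derivation condition on $d$ with the cocycle condition on $\tau$ by computing on basis elements. First I would observe that since $\{\hat g \mid g \in G\}$ is a $k$-basis of $k_\alpha G$, a $k$-linear map $d : k_\alpha G \to M$ is determined by the values $d(\hat g) \in M$, and writing $d(\hat g) = \tau(g)\hat g$ is no restriction at all, since $\hat g$ is invertible in $k_\alpha G$ so that $\tau(g) := d(\hat g)\hat g^{-1}$ is the unique element of $M$ with this property (using \ref{inverse}, or simply the normalised form of $\hat g^{-1}$). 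Thus $d \mapsto \tau$ and $\tau \mapsto d$ are mutually inverse, and the content of the lemma is the equivalence of the two defining conditions together with compatibility with the coboundary/inner relations.

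Next I would carry out the key calculation. Apply the Leibniz rule to the product $\hat g \hat h = \alpha(g,h)\widehat{gh}$: on one hand $d(\hat g \hat h) = \alpha(g,h) d(\widehat{gh}) = \alpha(g,h)\tau(gh)\widehat{gh} = \tau(gh)\hat g \hat h$; on the other hand $d(\hat g)\hat h + \hat g d(\hat h) = \tau(g)\hat g \hat h + \hat g \tau(h)\hat h = \tau(g)\hat g\hat h + ({}^{\hat g}\tau(h))\hat g \hat h$, where ${}^{\hat g}\tau(h) = \hat g\,\tau(h)\,\hat g^{-1}$ is precisely the $kG$-module action on $M$ from \ref{HH-adjunction}. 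Cancelling the invertible element $\hat g \hat h$ on the right, the Leibniz rule for $d$ at the pair $(\hat g, \hat h)$ is equivalent to
\[
\tau(gh) = \tau(g) + {}^{g}\tau(h),
\]
which is exactly the $1$-cocycle identity for $\tau : G \to M$. Since derivations on $k_\alpha G$ are determined by their values on the basis and the Leibniz rule need only be checked on basis pairs (it then extends bilinearly), $d$ is a derivation if and only if $\tau \in Z^1(G; M)$. This proves the first assertion.

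For the isomorphism $\H^1(G; M) \cong \HH^1(k_\alpha G; M)$, I would check that the bijection $Z^1(G;M) \cong \Der(k_\alpha G; M)$ just established is $k$-linear (immediate from the formulas) and that it carries coboundaries to inner derivations. A coboundary $\tau(g) = {}^{g}m - m$ for some fixed $m \in M$ corresponds to the $k$-linear map $d$ with $d(\hat g) = ({}^{g}m - m)\hat g = \hat g m \hat g^{-1}\hat g - m\hat g = \hat g m - m \hat g = [m, \hat g]$ — wait, sign: $d(\hat g) = \hat g m - m\hat g$, i.e. $d = -[m,-]$ up to sign, in any case an inner derivation; conversely every inner derivation $[c,-]$ arises this way with $m = -c$ (or $m=c$ after adjusting the sign convention, which I would pin down carefully). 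Hence the bijection descends to an isomorphism of quotients $Z^1(G;M)/B^1(G;M) \cong \Der(k_\alpha G; M)/\IDer(k_\alpha G; M)$, which under the identifications $\H^1(G;M)$ on the left and $\HH^1(k_\alpha G; M)$ on the right (the latter via the standard description of $\HH^1$ as derivations modulo inner derivations with bimodule coefficients) gives the claim.

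I do not anticipate a genuine obstacle here: the proof is a direct computation. The one point requiring care is bookkeeping with the cocycle $\alpha$ and the induced module structure — one must be sure that $\alpha(g,h)$ really does cancel cleanly (it multiplies both sides of the Leibniz equation by the same scalar once everything is expressed in terms of $\widehat{gh}$), and that the $kG$-action ${}^{g}m = \hat g m \hat g^{-1}$ is well-defined independently of the choice of basis lift, which is already recorded in the discussion preceding \ref{HH-adjunction}. A secondary bit of care is the sign in matching coboundaries with inner derivations, and noting that the Leibniz rule, being $k$-bilinear in its two arguments, only needs verification on the basis $\{\hat g\}$, so that checking it on pairs $(\hat g,\hat h)$ suffices.
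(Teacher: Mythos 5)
Your proposal is correct and follows essentially the same route as the paper's proof: the Leibniz rule on the basis pair $(\hat g,\hat h)$ is matched to the $1$-cocycle identity after cancelling $\alpha(g,h)$ and the invertible element $\hat g\hat h$, and coboundaries correspond to inner derivations (the sign discrepancy you flag is harmless, since both $B^1(G;M)$ and the space of inner derivations are stable under negation). Your additional remarks — that $\tau\mapsto d$ is a bijection because $\hat g$ is invertible, and that the Leibniz defect is $k$-bilinear so it suffices to check it on basis pairs — are minor elaborations of points the paper leaves implicit.
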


\begin{proof}
Let $g$, $h\in G$. We have
$$d(\hat g\hat h)= \alpha(g,h) d(\widehat{gh})=\alpha(g,h)\tau(gh) \widehat{gh}
= \tau(gh) \hat g\hat h$$
and
$$d(\hat g)\hat h + \hat g d(\hat h) = \tau(g)\hat g \hat h + \hat g \tau(h) \hat h =
(\tau(g) + {^g{\tau(h)}}) \hat g\hat h.$$
Thus $d$ is a derivation if and only if $\tau$ is a $1$-cocycle.
We have $\tau(g) = m-{^gm}$  for some $m\in M$ if and only if
$d(\hat g)=$ $\tau(g)\hat g =$ $m\hat g-{^gm} \hat g=$ $m\hat g-\hat g m=$ $[m,\hat g]$.
Thus  $d$ is an inner derivation if and only if $\tau$ is a $1$-coboundary.
The result follows. 
\end{proof}

By standard facts on group cohomology, this Lemma implies that if $N$ is
a normal subgroup of $G$ of index invertible in $k$, then, using
\cite[Proposition III.10.4]{Brown} and the isomorphism \ref{HH-adjunction}
with $N$ instead  of $G$,  we have an isomorphism
\begin{Statement} \label{HH-normal}
$$\HH^*(k_\alpha G; M) \cong \HH^*(k_\alpha N; M)^{G/N} \cong \H^*(N; M)^{G/N}, $$
\end{Statement}

\noindent where we use the same letter $\alpha$ for the restriction of $\alpha$ to 
$N\times N$.
The action of $G/N$ on the last two terms is induced by the conjugation action of $G$
on $k_\alpha N$, on $N$, and on $M$, where we note that
$N$ acts as identity on $\HH^*(k_\alpha N; M)$. (This is
just the version for twisted group algebra of the arguments in the proof
of \cite[Theorem 3.2]{Murphy23}.) If $M=k_\alpha G$ and if $\alpha$ is
the inflation to $G\times G$ of a $2$-cocycle in $Z^2(G/N; k^\times)$,
then $k_\alpha G= kN \oplus k_\alpha(G\setminus N)$, and hence, still
assuming that the index of $N$ in $G$ is invertible in $k$, the first isomorphism in
\ref{HH-normal} specialises to
\begin{Statement}\label{HH-normal-2}
$$\HH^*(k_\alpha G) \cong \HH^*(kN)^{G/N} \oplus 
\HH^*(kN; k_\alpha(G\setminus N)^{G/N},$$
\end{Statement} 

\noindent which shows in particular that $\HH^1(kN)^{G/N}$ is a Lie subalgebra of
$\HH^1(k_\alpha G)$.

\medskip
In what follows we will frequently identify the elements in $G$ to their images
in $k_\alpha G$. In that case, for two elements $g$, $h\in G$, we will denote by 
$gh$ the product in the group and by $g\cdot h$ the product in $k_\alpha G$.

\section{Derivations  on $k_\alpha(P\rtimes E)$ and $E$-stable derivations on
$kP$}  \label{E-stable-section} 

Let $p$ be a prime and $k$ a field of characteristic $p$.
We will apply the above calculations in twisted group algebras to groups of
the form $P\rtimes E$ for some finite $p$-group $P$, some $p'$-subgroup $E$ 
of $\Aut(P)$, and some  $\alpha\in Z^2(E; k^\times)$ inflated  to $P\rtimes E$ via 
the canonical surjection $P\rtimes E\to E$. The resulting $2$-cocycle in 
$Z^2(P\rtimes E; k^\times)$ will abusively again be denoted by the same letter
$\alpha$.  That is, for $u$, $v\in P$ and $x$, $y\in E$ we  have 
$$\alpha(ux, vy)= \alpha(x,y). $$ 
If we assume in addition that $\alpha$ is normalised, then $\alpha(x,y)$ 
is equal to $1$ if one of $x$, $y$ is trivial. Note that this implies in 
particular that $kP$ is a subalgebra of $k_\alpha(P\rtimes E)$ and that
$k_\alpha(P\rtimes E)$ is isomorphic to $k(P\rtimes E)$ as a $kP$-$kP$-bimodule
(cf. \cite[Corollary 5.3.8]{LiBookI}).
The conjugation action of $E$ on $kP$ and on $k_\alpha(P\rtimes E)$
induces an action of $E$ on $\HH^*(kP; k_\alpha(P\rtimes E))$.

\begin{Lemma} \label{twisted2}
Let $P$ be a finite $p$-group and $E$ a $p'$-subgroup of $\Aut(P)$. 
Let $\alpha\in Z^2(E; k^\times)$ inflated to $P\rtimes E$ via the
canonical surjection $P\rtimes E\to E$. 
\begin{itemize}
\item[{\rm (i)}] We have canonical graded isomorphisms
\begin{align*}
\HH^*(k_\alpha(P\rtimes E)) & \cong  (\HH^*(kP; k_\alpha(P\rtimes E))^E \\
 & \cong  \HH^*(kP)^E \ \oplus\  (\oplus_{e\in E\setminus \{1\}}\ \HH^*(kP; kP\cdot e))^E.
 \end{align*}
 \item[{\rm (ii)}] 
If $E$ is abelian, then $E$ stabilises every summand in the last direct sum in (i), and
we have canonical graded isomorphisms
$$\HH^*(k_\alpha (P\rtimes E)) \cong\ \oplus_{e\in E}\ \HH^*(kP; kP\cdot e)^E\ 
\cong\  \oplus_{e\in E}\ \H^*(P; kP\cdot e)^E.$$
\item[{\rm (iii)}] 
If  $E$ acts freely on $P\setminus \{1\}$, then for all positive 
integers $n$ we have
$$\HH^n(k_\alpha(P\rtimes E))\cong \HH^n(kP)^E.$$
For $n=1$ this is a Lie algebra isomorphism.
\end{itemize}
\end{Lemma}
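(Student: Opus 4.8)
The plan is to treat $N = P$ as a normal subgroup of $G = P \rtimes E$ of index $|E|$, which is invertible in $k$ since $E$ is a $p'$-group, and then feed this into the machinery assembled in Section \ref{twisted-Section}. For part (i), the first isomorphism is exactly the specialisation \ref{HH-normal} applied with $N = P$ and $M = k_\alpha G = k_\alpha(P\rtimes E)$, noting that $P$ acts trivially on $\HH^*(k_\alpha P) = \HH^*(kP)$ (here $\alpha$ restricted to $P \times P$ is trivial since $\alpha$ is inflated from $E$, so $k_\alpha P = kP$). For the second isomorphism, I would use the bimodule decomposition $k_\alpha(P \rtimes E) = kP \oplus \bigoplus_{e \in E \setminus\{1\}} kP\cdot e$ as a $kP$-$kP$-bimodule — this is the decomposition underlying \ref{HH-normal-2}, but here I keep the individual summands $kP \cdot e$ rather than lumping them together, since $\HH^*(kP; -)$ is additive in the coefficient module. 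One must check that the $E$-action on $\HH^*(kP; k_\alpha(P\rtimes E))$ is the one induced by conjugation with the $\hat x$, $x \in E$; this is where \ref{conj-action-2} is used, giving $^{\hat x}\hat u = \widehat{^xu}$ for $u \in P$, so conjugation by $\hat x$ permutes the summands $kP \cdot e$ according to the conjugation action of $E$ on itself (which is trivial) combined with its action on the $kP$-factors. Taking $E$-fixed points then distributes over the direct sum, yielding the displayed formula; the identification $\HH^1 \cong \Der/\IDer$ inherits the Lie bracket through these maps by functoriality, since all the isomorphisms are induced by the adjunction \ref{HH-adjunction} which is natural.

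For part (ii), when $E$ is abelian the conjugation action of $E$ on the index set $E \setminus \{1\}$ is trivial, so each summand $kP \cdot e$ is individually $E$-stable; taking $E$-fixed points summand by summand gives the first isomorphism $\HH^*(k_\alpha(P\rtimes E)) \cong \bigoplus_{e \in E} \HH^*(kP; kP\cdot e)^E$ (absorbing the $e = 1$ term $\HH^*(kP)^E$ into the sum). The second isomorphism is then just the adjunction \ref{HH-adjunction} applied with $G$ replaced by $P$ and $M = kP\cdot e$, which identifies $\HH^*(kP; kP\cdot e) \cong \H^*(P; kP\cdot e)$; since all maps are $E$-equivariant, this passes to $E$-fixed points.

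For part (iii), the hypothesis that $E$ acts freely on $P \setminus \{1\}$ means that for each non-trivial $e \in E$, viewed as an automorphism of $P$, the only fixed point is $1$. By Lemma \ref{fpf-Lemma}, the $kP$-module $(kP)_e$ — which is precisely $kP \cdot e$ with its $kP$-$kP$-bimodule structure realised through the twisted conjugation $u \cdot (x \cdot e) \cdot u^{-1} = (uxe(u)^{-1}) \cdot e$, i.e. the outer tensor action corresponding to $e$ — is projective as a $kP$-module, hence projective as a $kP$-$kP$-bimodule (equivalently as a $kP \otimes_k (kP)^{\op}$-module, since $kP$ is symmetric and the bimodule is free of rank one on one side twisted by an automorphism). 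Therefore $\HH^n(kP; kP\cdot e) = \Ext^n_{kP \otimes_k (kP)^{\op}}(kP, kP\cdot e) = 0$ for all $n \geq 1$ and all $e \neq 1$. Plugging this into the formula from part (i) collapses the direct sum over $E \setminus \{1\}$, leaving $\HH^n(k_\alpha(P\rtimes E)) \cong \HH^n(kP)^E$ for $n \geq 1$; for $n = 1$ this is a Lie algebra isomorphism because the surviving summand $\HH^1(kP)^E$ is precisely the Lie subalgebra identified in part (i), and Lemma \ref{Lemma-3} guarantees its elements are represented by $E$-stable derivations.

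The main obstacle I anticipate is bookkeeping rather than conceptual: one must carefully verify that the bimodule structure on $kP \cdot e$ matches the module $(kP)_e$ of Lemma \ref{fpf-Lemma} with the correct automorphism (so that "free action on $P \setminus \{1\}$" translates to "$e$ has no non-trivial fixed point"), and that projectivity as a one-sided module upgrades to projectivity as a bimodule — this uses that $kP \cdot e \cong kP$ as a left $kP$-module and that twisting a free bimodule by an automorphism on one side keeps it projective over the enveloping algebra. Everything else is a direct assembly of \ref{HH-normal}, \ref{HH-normal-2}, \ref{HH-adjunction}, \ref{conj-action-2} and Lemmas \ref{fpf-Lemma} and \ref{Lemma-3}.
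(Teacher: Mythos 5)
Your overall route for (i) and (ii) is the paper's: apply the isomorphism \ref{HH-normal} with $N=P$, decompose $k_\alpha(P\rtimes E)=kP\oplus\bigoplus_{e\neq 1}kP\cdot e$ as a $kP$-$kP$-bimodule, use additivity of $\HH^*(kP;-)$ and track the $E$-action via \ref{conj-action-2}, then pass to fixed points (one small slip: for general $E$ the conjugation action permutes the summands $kP\cdot e$ according to conjugation of $E$ on itself, which is \emph{not} trivial unless $E$ is abelian; this is harmless in (i), where fixed points are taken of the whole sum and only the $e=1$ summand needs to be separately stable, and it is exactly the hypothesis that makes (ii) work).

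In (iii), however, there is a genuine error in your justification of the vanishing. You claim that $kP\cdot e\cong {}_1(kP)_e$ is projective as a $kP$-$kP$-bimodule, i.e.\ as a $kP\tenk(kP)^\op$-module, and deduce $\Ext^n_{kP\tenk(kP)^\op}(kP,kP\cdot e)=0$ for $n\geq 1$. This bimodule is never projective for $P\neq 1$: every nonzero projective $kP\tenk(kP)^\op\cong k(P\times P)$-module has dimension divisible by $|P|^2$, whereas $\dim_k kP\cdot e=|P|$; twisting the regular bimodule $kP$ by an automorphism on one side is an auto-equivalence of the bimodule category and cannot create projectivity (recall that $kP$ itself is not projective as a bimodule). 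What Lemma \ref{fpf-Lemma} gives you is projectivity of the \emph{one-sided} $kP$-module $(kP)_e$ carrying the twisted conjugation action $u\colon x\mapsto ux\,({}^eu)^{-1}$, and the correct way to convert this into vanishing of Hochschild cohomology is the adjunction \ref{HH-adjunction} (Eckmann--Shapiro), which you already use in (i) and (ii): it identifies $\HH^n(kP;kP\cdot e)\cong \H^n(P;kP\cdot e)$ with $P$ acting by conjugation, and this conjugation module is exactly $(kP)_e$. Since it is projective, hence free over $kP$ (equivalently coinduced from the trivial subgroup, or injective because $kP$ is self-injective), its group cohomology vanishes in all positive degrees, which collapses the sum in (i) and yields $\HH^n(k_\alpha(P\rtimes E))\cong \HH^n(kP)^E$; the $n=1$ statement is then a Lie algebra isomorphism as you say, since $\HH^1(kP)^E$ sits inside $\HH^1(k_\alpha(P\rtimes E))$ as a Lie subalgebra by \ref{HH-normal-2}. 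This is precisely the paper's argument, so the repair requires no new ingredients, only replacing the false bimodule-projectivity step by the adjunction you already invoke.
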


\begin{proof}
Since $E$ is a $p'$-group, the statements (i) and (ii) follow from the isomorphism
\ref{HH-normal}.
Statement (iii)  is well-known (see e.g. \cite[Proposition 5.2]{KL-Frobenius}, 
or \cite[Theorem 3.2]{Murphy23}) and  follows from (i) and together
with the  fact that $H^n(P; kP\cdot e)=0$ for $e\neq 1$ by Lemma \ref{fpf-Lemma}, 
for any positive integer $n$.
\end{proof}

For the sake of completeness, we show that  the Lie algebra
embedding  of $\HH^1(kP)^E$ into $\HH^1(k_\alpha(P\rtimes E))$ 
to  a canonical map at the level of derivations.

\begin{Proposition} \label{twisted-PE-prop}
Let $P$ be a finite $p$-group and $E$ a finite $p'$-group acting on $P$. 
Let $\alpha\in$ $Z^2(E; k^\times)$, inflated to $P\rtimes E$ via the canonical 
surjection $P\rtimes E\to E$.
\begin{itemize}
\item[{\rm (i)}]
Every $E$-stable derivation $f$ on $kP$ extends uniquely to a derivation
$\hat f$ on $k_\alpha(P\rtimes E)$ with $k_\alpha E\subseteq$ $\ker(\hat f)$.
\item[{\rm (ii)}]
The correspondence $f\mapsto \hat f$ induces an injective Lie algebra
homomorphism $\HH^1(kP)^E\to $ $\HH^1(k_\alpha(P\rtimes E))$. 
\item[{\rm (iii)}]
If $E$ acts freely on $P\setminus \{1\}$, then  the Lie algebra homomorphism
 in {\rm{(ii)}}  is an isomorphism.
\end{itemize}
\end{Proposition}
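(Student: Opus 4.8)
The plan is to build the extension $\hat f$ explicitly by prescribing its values on the $k$-basis $\{\widehat{ux}\ |\ u\in P, x\in E\}$ of $k_\alpha(P\rtimes E)$, and then to check the Leibniz rule using the conjugation formulas \ref{conj-action-2} from Section \ref{twisted-Section}. Concretely, since $\alpha$ is inflated from $E$, we have $k_\alpha(P\rtimes E) = \bigoplus_{x\in E} kP\cdot x$ as a $kP$-bimodule, and $\hat x$ normalises $kP$ with $\hat x u \hat x^{-1} = \widehat{{}^xu}$ for $u\in P$. For $f\in\Der(kP)^E$ I would define $\hat f(a\cdot x) = f(a)\cdot x$ for $a\in kP$ and $x\in E$. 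To see this is a derivation it suffices to check $\hat f$ on products of basis elements $\widehat{ux}$; writing $\widehat{ux}\cdot\widehat{vy} = u\cdot{}^x v\cdot(\widehat{xy})\,\alpha(x,y)$ and applying the Leibniz rule for $f$ on $kP$ together with the $E$-stability $f({}^x v) = {}^x f(v)$, the scalar $\alpha(x,y)$ and the element $\widehat{xy}$ pass through, and the two sides match. Uniqueness: any derivation on $k_\alpha(P\rtimes E)$ extending $f$ and killing $k_\alpha E$ must send $\widehat{ux} = \hat u\cdot\hat x$ to $\hat f(\hat u)\hat x + \hat u\,\hat f(\hat x) = f(u)\hat x$ by Leibniz and $\hat f(\hat x)=0$, so it is forced. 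This proves (i).

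For (ii), first note $f\mapsto\hat f$ is $k$-linear, and it is a Lie algebra homomorphism on $\Der(kP)^E$: for $f,g\in\Der(kP)^E$ the bracket $[\hat f,\hat g]$ again kills $k_\alpha E$ and restricts to $[f,g]$ on $kP$, so by the uniqueness in (i) it equals $\widehat{[f,g]}$. Next, if $f = [c,-]$ is inner on $kP$ with $c\in kP\subseteq k_\alpha(P\rtimes E)$, then $\hat f$ agrees with the inner derivation $[c,-]$ of $k_\alpha(P\rtimes E)$ on $kP$ and both kill... well, $[c,-]$ need not kill $k_\alpha E$, but $\hat f - [c,-]$ is a derivation of $k_\alpha(P\rtimes E)$ vanishing on $kP$; one checks it vanishes on all of $k_\alpha E$ using $E$-stability of $f$ — more directly, since $f=[c,-]$ being $E$-stable forces $c$ to be chosen (after subtracting a central element, harmless for the class) so that $[{}^x c, -]=[c,-]$ on $kP$, i.e. ${}^xc - c$ is central in $kP$; then $[c,\hat x] = (c - {}^xc)\hat x$ lies in $Z(kP)\hat x$, and comparing with $\hat f(\hat x)=0$ shows $\hat f - [c,-]$ sends $\hat x$ into $Z(kP)\hat x$, and iterating shows the class of $\hat f$ in $\HH^1$ depends only on the class of $f$. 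Combined with Lemma \ref{Lemma-3} (every class in $\HH^1(kP)^E$ has an $E$-stable representative) this gives a well-defined Lie algebra map $\HH^1(kP)^E\to\HH^1(k_\alpha(P\rtimes E))$, and by Lemma \ref{twisted2}(i) — whose isomorphism $\HH^1(k_\alpha(P\rtimes E))\cong \HH^1(kP)^E\oplus(\bigoplus_{e\neq 1}\cdots)^E$ is induced by restriction of cocycles along $P\hookrightarrow P\rtimes E$ — the composite of our map with the projection to $\HH^1(kP)^E$ is the identity, giving injectivity.

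For (iii), when $E$ acts freely on $P\setminus\{1\}$, Lemma \ref{twisted2}(iii) already gives a Lie algebra isomorphism $\HH^1(k_\alpha(P\rtimes E))\cong\HH^1(kP)^E$, and by the description of that isomorphism via restriction of cocycles it is the inverse of the map in (ii) up to identifications; so (ii) is an isomorphism. The main obstacle I anticipate is the bookkeeping in (ii) showing that $f\mapsto\hat f$ is well-defined modulo inner derivations — i.e. that the image of an $E$-stable inner derivation of $kP$, extended by zero on $k_\alpha E$, is inner (or at least cohomologically trivial) in $k_\alpha(P\rtimes E)$; the cleanest route is probably not to argue on the chosen representative $c$ at all, but to invoke that the restriction map $\HH^1(k_\alpha(P\rtimes E))\to\HH^1(kP)^E$ from Lemma \ref{twisted2}(i) is already known to be a split surjection of Lie algebras, verify by the explicit cocycle description of \ref{HH-normal}/Lemma \ref{twisted1} that $f\mapsto\hat f$ (read on cocycles via Lemma \ref{twisted1}, $\tau\mapsto\hat\tau$ with $\hat\tau(ux)=\tau(u)$ for $u\in P$, $x\in E$) is precisely a section of it, and then (i)–(iii) all follow formally. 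Everything else is a direct Leibniz-rule verification with the conjugation formula \ref{conj-action-2}.
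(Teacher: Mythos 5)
Your part (i) is essentially the paper's proof: the same explicit definition $\hat f(\widehat{ux})=f(u)\cdot\hat x$, the same Leibniz/$E$-stability verification using \ref{conj-action-2}, and the same uniqueness argument; likewise your bracket computation $[\hat f,\hat g]=\widehat{[f,g]}$ via uniqueness is exactly what is needed. The difference lies in part (ii), in how you handle inner derivations. Your first, direct attempt is incomplete: from $E$-stability you correctly get that ${}^xc-c$ is central in $kP$, but the conclusion ``iterating shows the class of $\hat f$ depends only on the class of $f$'' is not an argument --- a derivation vanishing on $kP$ and sending $\hat x$ into $Z(kP)\hat x$ is not obviously a coboundary. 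The paper closes this gap with a one-line averaging trick you missed: since $[{}^xc,-]$ and $[c,-]$ agree on $kP$ for all $x\in E$, one may replace $c$ by $\frac{1}{|E|}\Tr_1^E(c)$, which is $E$-fixed, and then $\hat f=[c,-]$ is literally inner in $k_\alpha(P\rtimes E)$; conversely, if $\hat f=[d,-]$ with $d$ centralising $k_\alpha E$, writing $d=\sum_{e\in E}c_e\,e$ with $c_e\in kP$ gives $f=[c_1,-]$, whence injectivity. Your fallback route --- reading $\hat f$ on cocycles via Lemma \ref{twisted1} as $\hat\tau(ux)=\tau(u)$ and identifying $f\mapsto\hat f$ as a section of the restriction isomorphism \ref{HH-normal} underlying Lemma \ref{twisted2}(i), so that well-definedness and injectivity follow because restriction followed by projection onto the $e=1$ summand returns $[f]$ --- is mathematically sound and does prove the statement, including (iii) (either by your ``inverse of restriction'' description or simply by injectivity plus the dimension count from Lemma \ref{twisted2}(iii)). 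What it buys is formal economy; what it costs is that it leans on the already-established abstract embedding, whereas the point of the Proposition in the paper (stated just before it) is precisely to realise that embedding concretely at the level of derivations, which the averaging argument achieves self-containedly. So: correct modulo the flagged gap in your direct argument, with the fallback acceptable but the paper's $\Tr_1^E$-averaging being the cleaner fix you should record.
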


\begin{proof}
We may assume that $\alpha$ is normalised; that is, $\alpha(1, x)=$ $1=$ 
$\alpha(x,1)$ for $x\in P\rtimes E$.  Since $\alpha$ is inflated to $P\rtimes E$ 
via the canonical surjection it follows that for $u\in P$ and $y\in E$ we have 
$\alpha(u,y)=$ $1=$ $\alpha(y,u)$. Equivalently,  the image in $k_\alpha(P\rtimes E)$
of the product $uy$ (resp. $yu$)  in $P\rtimes E$ is equal to the product 
$u\cdot y$ (resp. $y\cdot u$)  in $k_\alpha(P\rtimes E)$.

 Let $f\in \Der(kP)^E$.
Define a linear map $\hat f$ on $k_\alpha(P\rtimes E)$  by setting 
$$\hat f(uy)= f(u) \cdot y,$$ 
where $uy$ is the product in $P\rtimes E$ and where
the right side is the product taken in $k_\alpha(P\rtimes E)$. This defines $\hat f$
uniquely as a linear map on $k_\alpha(P\rtimes E)$ which extends $f$ and vanishes 
on  $k_\alpha E$. The Leibniz rule implies that if there is a derivation on 
$k_\alpha(P\rtimes E)$  which extends $f$ and which vanishes on $k_\alpha E$, then 
it must be equal to $\hat f$.
It remains  to check that $\hat f$ is indeed a derivation. 

Let $u$, $v\in P$ and $y$, $z\in E$. Calculating in $k_\alpha(P\rtimes E)$ and using
that $\alpha$ is normalised and inflated to $P\rtimes E$, we have 
$$(uy)\cdot (vz)= uyvz \alpha(uy, vz)= u(^yv)yz \alpha(y,z) = 
u(^yv)\cdot y \cdot z.$$
Thus
$$\hat f((uv)\cdot (yz)) = \hat f(u(^yv) \cdot y\cdot z) =    f(u(^yv)) \cdot y\cdot z.$$
We need to show that this is equal to $\hat f(uy) \cdot vz + uy \cdot \hat f(vz)$.
Using that $f$ is $E$-stable as well as a derivation, together with the comments 
preceding this  Proposition, we have
$$\hat f(uy) \cdot vz + uy \cdot \hat f(vz) = f(u)\cdot y \cdot vz + uy \cdot f(v)\cdot z =
f(u)(^yv) \cdot y\cdot z + u( {^yf(v)}) \cdot y\cdot z = $$
$$f(u)(^yv) \cdot y\cdot z + u f(^yv) \cdot y\cdot z = f(u(^yv))\cdot y\cdot z,$$
which implies that $\hat f$ is a derivation on $k_\alpha(P\rtimes E)$. 
The construction of $\hat f$ implies that the assignment $f\mapsto \hat f$ is a Lie algebra
homomorphism $\Der(kP)^E\to$ $\Der(k_\alpha(P\rtimes E))$. If $f$ is inner, hence equal
to $[c,-]$ for some $c\in kP$, then $\hat f(uy)=$ $f(u)y=[c,u] y$. The $E$-stablility of $f$
implies that  $[c,{^yu}]=$ $[^yc,{^yu}]$. This holds for all $u\in P$, and hence $[c,-]$ and
$[^yc,-]$ have the same restriction  to $kP$, which is equal to $f$.
Thus we may replace $c$ by $\frac{1}{|E|}\Tr_1^E(c)$,
and then $\hat f=[c,-]$, showing that $\hat f$ is an inner derivation. 
Conversely, if 
$\hat f$ is an inner derivation, then $\hat f=[d,-]$ for some $d\in k_\alpha(P\rtimes E)$
which centralises $k_\alpha E$. Writing $d=\sum_{e\in E} c_e e$ for some $c_e\in kP$,
one sees that $f=[c_1,-]$, so $f$ is inner. This, together with Lemma \ref{Lemma-3},
 shows that the assignment
$f\mapsto \hat f$ induces an injective Lie algebra homomorphism $\HH^1(kP)^E\to$
$\HH^1(k_\alpha(P\rtimes E))$. The last statement follows from Lemma
\ref{twisted2} (iii). 
\end{proof}

\begin{Remark}  \label{frobenius-2-cocycle}
With the notation of Proposition \ref{twisted-PE-prop}, if $E$ acts freely on 
$P\setminus \{1\}$, then $P\rtimes E$ is a Frobenius group. The structural properties 
of Frobenius groups, as described in \cite[Theorem 10.3.1]{Gor}, imply that if $k$ is 
algebraically closed, then $H^2(E;k^\times)$  is trivial.   Thus  $\alpha$ may be chosen 
to be $1$ in that case.
\end{Remark}

\begin{Lemma} \label{lem1}
Let $P$ be a   finite $p$-group and $E$ a finite $p'$-group acting on $P$.
Set $Q=\Phi(P)$. 

\begin{itemize}
\item[{\rm (i)}]
The canonical surjection $P\to P/Q$ induces a Lie algebra homomorphism
$\HH^1(kP) \to$ $ \HH^1(kP/Q)$.

\item[{\rm (ii)}]
If $P$ is abelian, then  $J(kQ)\subseteq$  $J(kP)^p$, and  the canonical surjection 
$P\to P/Q$ induces a surjective Lie algebra homomorphism $\HH^1(kP) \to$
$\HH^1(kP/Q)$ with nilpotent kernel.

\item[{\rm (iii)}]
If $P$ is abelian, then the Lie algebra homomorphism from {\rm (ii)} induces a 
surjective Lie algebra homomorphism $\HH^1(kP)^E\to \HH^1(kP/Q)^E$ 
with nilpotent kernel.
\end{itemize}
\end{Lemma}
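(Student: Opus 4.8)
The plan is to prove the three parts in sequence, with (i) being a direct specialization of the machinery already set up, (ii) the substantive point, and (iii) a routine passage to $E$-fixed points. For part (i), I would note that $Q=\Phi(P)$ is a normal subgroup of $P$, so $I(kQ)kP$ is an ideal of $kP$; by the standard fact that $kP/I(kQ)kP\cong kP/Q$, this ideal is exactly $[kP,kP]kP$ when $P/Q$ is abelian — but more directly, I would invoke Lemma \ref{Der-Lemma1} applied to the quotient $kP\to kP/I(kQ)kP$, or simply observe that since $Q$ is characteristic (hence normal) in $P$, the surjection $P\to P/Q$ induces an algebra surjection $kP\to kP/Q$ whose kernel $I(kQ)kP$ is preserved by every derivation on $kP$ (the kernel is spanned by $kP(y-1)$ for $y\in Q$, and $f(y-1)\in J(kP)$ while $f$ satisfies Leibniz, so the kernel maps into itself). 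Inner derivations map to inner derivations, giving the Lie algebra homomorphism $\HH^1(kP)\to\HH^1(kP/Q)$.

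For part (ii), assume $P$ abelian. The key claim is $J(kQ)\subseteq J(kP)^p$. Since $P$ is abelian, $\Phi(P)=P^p$, so every element of $Q$ is of the form $u^p$ for $u\in P$; then $u^p-1=(u-1)^p$ in characteristic $p$ (using commutativity — $(u-1)^p=u^p-1$ since all intermediate binomial coefficients vanish mod $p$ and cross terms involving $1$ behave well), and $(u-1)^p\in J(kP)^p$. Since $J(kQ)=I(kQ)$ is spanned by such elements $u^p-1$, we get $J(kQ)\subseteq J(kP)^p$, hence $I(kQ)kP\subseteq J(kP)^p$. Now the kernel of $\HH^1(kP)\to\HH^1(kP/Q)$ consists of classes represented by derivations $f$ on $kP$ that, after adjusting by an inner derivation, descend to zero on $kP/Q$, i.e. satisfy $\Im(f)\subseteq I(kQ)kP\subseteq J(kP)^p$. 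For surjectivity I would lift: given a derivation $\bar f$ on $kP/Q$, use that $kP$ is a commutative algebra with $kP/J(kP)=k$ and that a derivation lifts along the surjection (one can choose a lift on a generating set of $P$ compatibly with the relations modulo the fact that raising to the $p$-th power kills the relevant obstruction — here the Leibniz rule and commutativity make $f(u^p)=pu^{p-1}f(u)=0$ automatically, so any assignment $u\mapsto f(u)$ respecting the abelian group structure of $P$ lifts). Then the kernel sits inside $\Der_p(kP)$ modulo inner derivations, and $\Der_p(kP)\subseteq\Der_2(kP)$ is nilpotent by Proposition \ref{rad-Prop}(iii), so the kernel of the Lie algebra homomorphism is nilpotent.

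For part (iii), the map $\HH^1(kP)\to\HH^1(kP/Q)$ is $E$-equivariant because the canonical surjection $P\to P/Q$ is $E$-equivariant ($Q=\Phi(P)$ is characteristic), so it restricts to a Lie algebra homomorphism $\HH^1(kP)^E\to\HH^1(kP/Q)^E$ with kernel contained in the (nilpotent) kernel from (ii), hence nilpotent. Surjectivity on $E$-fixed points follows because $E$ is a $p'$-group: applying the exact functor $(-)^E$ (equivalently, averaging by $\frac{1}{|E|}\Tr_1^E$) to the surjection $\HH^1(kP)\to\HH^1(kP/Q)$ keeps it surjective. I expect the main obstacle to be the surjectivity statement in (ii): one must argue carefully that every derivation on $kP/Q$ lifts to a derivation on $kP$, which I would handle either by the concrete description of derivations on commutative group algebras via $\Hom(P,J(kP)/\text{higher})$ together with the vanishing $f(u^p)=0$ noted above, or by using that $kP$ and $kP/Q$ both satisfy the hypothesis of Lemma \ref{Czero} (with $C=k$) and comparing representatives. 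The bound $J(kQ)\subseteq J(kP)^p$ is elementary but is what makes the kernel land in $\Der_2$ rather than merely $\Der_1$, which is exactly what Proposition \ref{rad-Prop}(iii) needs for nilpotency.
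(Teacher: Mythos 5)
Your treatment of parts (ii) and (iii) is essentially sound and matches the paper's strategy: the containment $J(kQ)\subseteq J(kP)^p$ via $u^p-1=(u-1)^p$, the identification of the kernel with (classes of) derivations whose image lies in $I(kQ)kP\subseteq J(kP)^2$, nilpotency via Proposition \ref{rad-Prop}, and surjectivity on $E$-fixed points because $E$ is a $p'$-group. For the surjectivity in (ii) the paper suggests either a K\"unneth reduction to the cyclic case with a direct verification, or Holm's formula $\HH^1(kP)\cong kP\tenk \H^1(P;k)$ together with $\H^1(P;k)=\H^1(P/Q;k)$; your generators-and-relations lifting (the relations $x^{p^a}=1$ impose no condition on a derivation in characteristic $p$, since $f(x^{p^a})=p^ax^{p^a-1}f(x)=0$ in the commutative algebra $kP$) is a legitimate variant of the first route.

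Part (i), however, has a genuine gap, since there $P$ is an arbitrary finite $p$-group. Your parenthetical argument that every derivation $f$ on $kP$ preserves the ideal $I(kQ)kP$ is a non sequitur: knowing $f(y-1)\in J(kP)$ for $y\in Q$ does not place $f(y-1)$ in $I(kQ)kP$, which is what the Leibniz rule would require. Your fallback identification is also false in general: $[kP,kP]kP=I(kP')kP$, and $\Phi(P)$ is usually strictly larger than $P'$ (for $P$ cyclic of order $p^2$ one has $[kP,kP]=0$ while $Q\neq 1$), so Lemma \ref{Der-Lemma1} cannot be applied directly to $kP\to kP/Q$. The statement you want is true, but it needs the two-step argument the paper uses: factor $kP\to kP/P'\to kP/Q$, apply Lemma \ref{Der-Lemma2} to the first map, and observe that $\Phi(P)/P'=\Phi(P/P')$, so the second map is the abelian case, where your own computation $f((x-1)^p)=f(x^p)=px^{p-1}f(x)=0$ shows that every derivation of $k(P/P')$ kills $J(k\overline{Q})$ and hence, by Leibniz, preserves the kernel $I(k\overline{Q})k(P/P')$ and descends. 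You have all the ingredients (they appear in your part (ii)), but as written your proof of (i) does not assemble them and the non-abelian case is not covered.
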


\begin{proof}
Since $Q$ contains the commutator subgroup $P'$ of $P$, 
the algebra homomorphism $kP\to kP/Q$ factors through
the algebra homomorphism $kP\to kP/P'$. By Lemma \ref{Der-Lemma2},
this homomorphism induces a Lie algebra homomorphism $\HH^1(kP)\to$
$\HH^1(kP/P')$. Thus we may assume that $P$ is abelian. 
The kernel of the canonical algebra homomorphism $kP\to kP/Q$ is
equal to $J(kQ)kP$.
Since $P$ is abelian, the subgroup $Q$ consists of all elements of the for $x^p$,
with $x\in P$. Thus $J(kQ)$ is spanned by the set of elements of the form
$x^p-1=$ $(x-1)^p$. In particular, $J(kQ)\subseteq$ $J(kP)^p$, which is the first
statement in (ii). 
Since $P$ is abelian, any derivation $f$ on $kP$ 
satisfies $f((x-1)^p)=p(x-1)^{p-1}f(x) =0$. Thus the kernel of $f$ contains
$J(kQ)$. The Leibniz rule implies that $f$ preserves the ideal $J(kQ)kP$.
Thus $f$ induces a derivation on $kP/Q$. This shows (i).

For the surjectivity statement in (ii), one can either play this
back to the case where $P$ is cyclic via the K\"unneth formula, and then
show by direct verification that every derivation on $kP/Q$ is induced
by a derivation on $kP$. Or one can use the formula $\HH^1(kP)\cong$
$kP\tenk \H^1(P,k)$ from \cite{Holma}, with the analogous formula for $P/Q$ 
and the
fact that $\H^1(P; k)=$ $\H^1(P/Q; k)$, since $\Hom(P, k)$ can be identified
with $\Hom(P/Q,k)$.

It remains to show in (ii)  that the kernel of the map $\HH^1(kP)\to$ $\HH^1(kP/Q)$
is nilpotent. Let $f$ be a derivation on $kP$ inducing the zero map on $kP/Q$.
Then the image of $f$ is contained in $J(kQ)kP$, and by the above this is contained
in $J(kP)^p$. The result follows from Proposition \ref{rad-Prop} (applied
with $C=k\cdot 1$, which is in the kernel of every derivation on $kP$). 
This shows (ii).

Since $E$ is a $p'$-group, statement (iii) is an immediate consequence of (ii).
\end{proof}

\begin{Proposition} \label{Pe-prop}
Let $P$ be a  finite $p$-group and $E$ a finite  $p'$-group acting on $P$.
Suppose that $[P,E]=P$. Let $e\in Z(E)$. 
Then $kPe$ is an $E$-stable $kP$-$kP$-bimodule summand of 
$k(P\rtimes E)$, and for every derivation $f : kP\to kPe$ which
is $E$-stable we have $\Im(f)\subseteq J(kP) e$.
\end{Proposition}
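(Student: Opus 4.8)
The plan is to split the statement into its two assertions. The fact that $kPe$ is an $E$-stable $kP$-$kP$-bimodule summand of $k(P\rtimes E)$ is routine: since $e\in Z(E)$, conjugation by any $y\in E$ sends $kPe$ to ${}^y(kPe)=k({}^yP)\,{}^ye=kPe$, using ${}^ye=e$; and $k(P\rtimes E)=\bigoplus_{x\in E}kPx$ is a $kP$-$kP$-bimodule decomposition since $\alpha$ here is trivial (we are in $k(P\rtimes E)$, not a twisted algebra), with $kPe$ one of the summands. So the work is entirely in the last clause: given an $E$-stable derivation $f:kP\to kPe$, show $\Im(f)\subseteq J(kP)e$.

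First I would reduce this to a statement modulo $J(kP)e$. Write $f(u)=\varphi(u)\,e$ for $u\in P$, where $\varphi:kP\to kP$ is $k$-linear; composing with the quotient $kP\to kP/J(kP)\cong k$ we get a $k$-linear map $\bar\varphi:kP\to k$, and the claim is exactly that $\bar\varphi=0$. The key point is to extract what the Leibniz rule says about $\bar\varphi$. For $u,v\in P$ one has $f(uv)=f(u)\cdot v + u\cdot f(v)=\varphi(u)\,e\,v + u\,\varphi(v)\,e$. Now $e\,v = ({}^ev)\,e$ in $k(P\rtimes E)$, so $f(uv)=\big(\varphi(u)\,{}^ev + u\,\varphi(v)\big)e$, i.e. $\varphi(uv)=\varphi(u)\,{}^ev + u\,\varphi(v)$ for all $u,v\in P$. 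Applying $kP\to k$ and using that every group element maps to $1$, this gives $\bar\varphi(uv)=\bar\varphi(u)+\bar\varphi(v)$: so $u\mapsto\bar\varphi(u)$ is a group homomorphism $P\to(k,+)$, hence it factors through $P/\Phi(P)$ — equivalently, $\bar\varphi$ corresponds to an element of $\Hom(P,k)\cong\Hom(P/\Phi(P),k)$, which we may regard via Lemma~\ref{PhiP-Lemma} as an element of $(J(\Fp P)/J(\Fp P)^2)^*$ after extending scalars.

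Next I would bring in $E$-stability. The hypothesis $f\circ({}^y-)={}^y(f(-))$ for $y\in E$ translates, after the same bookkeeping ($e$ central in $E$, so it commutes with conjugation by $y$), into $\varphi({}^yu)={}^y(\varphi(u))$ for $u\in P$, and hence $\bar\varphi({}^yu)=\bar\varphi(u)$ since the augmentation $kP\to k$ is $E$-fixed. Thus the homomorphism $\bar\varphi:P/\Phi(P)\to k$ is $E$-invariant, i.e. it is a homomorphism that factors through the coinvariants $(P/\Phi(P))_E$. But the hypothesis $[P,E]=P$ forces, by Lemma~\ref{PhiP-Lemma-2}, that $[P/\Phi(P),E]=P/\Phi(P)$, equivalently $C_{P/\Phi(P)}(E)=1$ and (by coprime action) $(P/\Phi(P))_E=1$ as well. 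Hence $\bar\varphi=0$, which is precisely $\Im(f)\subseteq J(kP)e$, as claimed.

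The main obstacle — really the only spot needing care — is the translation of the Leibniz and $E$-stability identities through the multiplication $e\cdot v = ({}^ev)\cdot e$ in $k(P\rtimes E)$ and the compatibility of everything with the augmentation; once one is careful that $e\in Z(E)$ makes ${}^e$ act trivially on the right coset label $e$ and that conjugation by $E$ fixes the augmentation map, the argument is a short diagram chase landing in Lemma~\ref{PhiP-Lemma-2}. One should also note at the outset that $f(u)$ for $u\in P$ determines $f$ on all of $kP$ by linearity, so it suffices to verify the displayed identities on group elements. (If one prefers to avoid passing to $\Fp$, the same conclusion follows directly: $\bar\varphi:P\to k$ is a homomorphism killing $\Phi(P)$ and fixed by $E$; restricting to $[P,E]$, each generator ${}^eu\,u^{-1}$ maps to $\bar\varphi({}^eu)-\bar\varphi(u)=0$, so $\bar\varphi$ vanishes on $[P,E]=P$.)
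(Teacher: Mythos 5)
Your proof is correct and follows essentially the same route as the paper: both compose $f$ with the augmentation of $kPe$ and use the hypothesis $[P,E]=P$ to force the resulting map to vanish, which gives $\Im(f)\subseteq J(kP)e$. The difference is only in execution — the paper factors $\epsilon\circ f|_J$ through the semisimple $kE$-module $J/J^2$ and invokes Lemma \ref{PhiP-Lemma-2}, whereas you extract from the Leibniz rule (via $e\cdot v=({}^ev)e$) an $E$-invariant additive homomorphism $u\mapsto\bar\varphi(u)$ from $P$ to $(k,+)$ and kill it directly on the generators of $[P,E]$, a slightly more elementary formulation of the same argument.
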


\begin{proof}
Set $J=J(kP)$. Clearly $kPe$ is a $kP$-$kP$-bimodule summand of
$k(P\rtimes E)$, and it is $E$-stable because $e\in$ $Z(E)$.
Since $f$ is a derivation, we have $f(J^2)\subseteq Je$, which in turn is
in the kernel of the augmentation map $kPe \to k$. 
Since $J$ is $E$-stable, it follows that the restriction of $f$
to $J$ is an $E$-stable map, as is the composition with the
augmentation map $\epsilon : kP e\to k$.  Thus the
 map $\epsilon\circ f|_J  : J \to k$ sends $J^2$ to zero,
 hence factors through the canonical surjection $J\to$ $J/J^2$.
We therefore get a commutative diagram of $kE$-modules 
 $$\xymatrix{ J \ar[rr]^{f|_J} \ar[d] & & kPe \ar[d]^{\epsilon} \\
 J/J^2 \ar[rr]_g & & k}$$
Since $[P,E]=P$,  it follows from Lemma \ref{PhiP-Lemma-2} that $J/J^2$ has 
 no nonzero trivial  summand as a $kE$-module. Since $J/J^2$ is also semi-simple 
as a $kE$-module this implies that $g$ is zero, hence that $f$ sends $kP$ to $Je$. 
\end{proof}

\begin{Corollary} \label{Pe-cor-1}
Let $P$ be a  finite $p$-group and $E$ a finite $p'$-group acting on $P$.
Suppose that $[P,E]=P$.
For every $E$-stable derivation $f : kP\to kP$ we have $\Im(f)\subseteq J(kP)$.
\end{Corollary}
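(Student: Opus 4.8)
The goal is to prove Corollary~\ref{Pe-cor-1}: if $[P,E]=P$ then every $E$-stable derivation $f:kP\to kP$ has image in $J(kP)$. The plan is to reduce this to Proposition~\ref{Pe-prop} by decomposing the target $kP$-$kP$-bimodule and using $E$-stability to control the components. First I would apply Proposition~\ref{Pe-prop}, which already handles exactly the situation of an $E$-stable derivation $f:kP\to kPe$ landing in a bimodule summand indexed by a \emph{central} element $e\in Z(E)$, concluding $\Im(f)\subseteq J(kP)e$. So the only real work is to pass from the full regular bimodule $kP$ to summands of the form $kPe$.

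The cleanest route is to observe that $kP$, as a $kP$-$kP$-bimodule, is the $e=1$ summand of $k(P\rtimes E)=\oplus_{e\in E}kPe$, and that $1\in Z(E)$ trivially. Thus Proposition~\ref{Pe-prop}, applied directly with $e=1$, gives $\Im(f)\subseteq J(kP)\cdot 1=J(kP)$ for any $E$-stable derivation $f:kP\to kP$. Indeed, nothing in the statement of Proposition~\ref{Pe-prop} requires $e\ne 1$; the case $e=1$ is a legitimate (and the simplest) instance, since the argument there only uses that $kPe$ is an $E$-stable bimodule summand of $k(P\rtimes E)$ carrying an augmentation map $\epsilon: kPe\to k$ of $kE$-modules, that $f(J^2)\subseteq J(kP)e\subseteq\ker(\epsilon)$, and that $J(kP)/J(kP)^2$ has no nonzero trivial $kE$-summand by Lemma~\ref{PhiP-Lemma-2} since $[P,E]=P$. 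So in fact the Corollary is an immediate specialisation of the Proposition, and the proof is essentially one line: apply Proposition~\ref{Pe-prop} with $e=1$.

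If one prefers a self-contained argument rather than invoking the Proposition as a black box, I would rerun its proof in the present notation: write $J=J(kP)$, note $f(J^2)\subseteq Jf(1)+f(1)J\subseteq J\cdot 0=\ldots$ — more carefully, $f$ is a derivation so $f(J^2)\subseteq J\cdot\Im(f)+\Im(f)\cdot J$, and since $\Im(f)\subseteq kP$ with $f(1)=0$ one gets $f(J^2)\subseteq J$ regardless; then the relevant map is $\epsilon\circ(f|_J):J\to k$, where $\epsilon:kP\to k$ is the augmentation. This is $E$-stable (as $J$ is $E$-stable and $\epsilon$ is $E$-equivariant), sends $J^2$ to $0$ (since $\epsilon(J)=0$... actually $\epsilon\circ f$ sends $J^2$ into $\epsilon(J)=0$), hence factors through $J/J^2\to k$. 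By Lemma~\ref{PhiP-Lemma-2}, $[P,E]=P$ forces $J/J^2$ to have no nonzero trivial $kE$-summand; being also semisimple as a $kE$-module, it admits no nonzero $kE$-map to the trivial module $k$, so $\epsilon\circ f$ vanishes on $J$, i.e. $f(J)\subseteq\ker(\epsilon)=J$. Since $f(1)=0$ and $kP=k\cdot 1\oplus J$, this gives $\Im(f)\subseteq J(kP)$.

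The main (indeed only) conceptual point—and it is not really an obstacle—is realizing that Proposition~\ref{Pe-prop} already contains the case at hand with $e=1$, so no genuinely new argument is needed; the Corollary records this special case for the record. A secondary point worth a sentence in the writeup is the justification that $\epsilon\circ f|_J$ is genuinely a morphism of $kE$-modules (not just $k$-linear), which follows because $f$ being $E$-stable means $f\circ c_y=c_y\circ f$ for each $y\in E$ acting by conjugation, and $\epsilon$ is constant on $E$-orbits. With these observations the proof is short and routine.
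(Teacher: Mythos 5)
Your proof is correct and is exactly the paper's: the Corollary is obtained by applying Proposition \ref{Pe-prop} with $e=1$, just as you do. The self-contained rerun of the Proposition's argument you include is also accurate, but not needed beyond the one-line specialisation.
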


\begin{proof}
This follows from Proposition \ref{Pe-prop} applied to $e=1$.
\end{proof}

Statement (ii) in the following Corollary is the special case of Theorem
\ref{twisted-PE-thm} (i) with $\alpha$ the trivial $2$-cocycle. 

\begin{Corollary} \label{Pe-cor-2}
Let $P$ be a  finite $p$-group and $E$ a finite abelian $p'$-group 
acting on $P$. Suppose that $[P,E]=P$. 
\begin{itemize}
\item[{\rm (i)}]
For every $E$-stable derivation $f : kP\to k(P\rtimes E)$
 we have $\Im(f)\subseteq J(k(P\rtimes E))$.

\item[{\rm (ii)}]
Every class in $\HH^1(k(P\rtimes E))$ is represented by a derivation 
$f : k(P\rtimes E)\to$ $k(P\rtimes E)$ which vanishes on $kE$, and any such 
derivation  $f$ satisfies  $\Im(f)\subseteq$ $J(k(P\rtimes E))$.
\end{itemize}
\end{Corollary}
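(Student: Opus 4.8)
The plan is to deduce (i) from Proposition \ref{Pe-prop} by decomposing the codomain, and then to bootstrap (ii) from (i) using the structural isomorphism \ref{HH-normal-2} together with Lemma \ref{Czero}. First I would prove (i): since $E$ is abelian, every $e\in E$ lies in $Z(E)$, so the bimodule decomposition $k(P\rtimes E)=\bigoplus_{e\in E} kPe$ is a decomposition into $E$-stable $kP$-$kP$-bimodule summands, and each summand is of the form treated in Proposition \ref{Pe-prop}. Given an $E$-stable derivation $f\colon kP\to k(P\rtimes E)$, compose with the projection $\pi_e$ onto the summand $kPe$; since $\pi_e$ is a homomorphism of $E$-stable $kP$-$kP$-bimodules, $\pi_e\circ f$ is again an $E$-stable derivation $kP\to kPe$, so Proposition \ref{Pe-prop} gives $\Im(\pi_e\circ f)\subseteq J(kP)e$. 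Summing over $e\in E$ yields $\Im(f)\subseteq\bigoplus_{e\in E} J(kP)e = J(kP)\cdot k(P\rtimes E)$. It remains to note that $J(kP)\cdot k(P\rtimes E)\subseteq J(k(P\rtimes E))$: this is standard, since $J(kP)k(P\rtimes E)$ is a nilpotent ideal of $k(P\rtimes E)$ (it is contained in the kernel of the projection $k(P\rtimes E)\to k(P\rtimes E)/J(kP)k(P\rtimes E)\cong kE$, which is semisimple as $E$ is a $p'$-group). This proves (i).

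For (ii), the key point is that the hypothesis $[P,E]=P$ forces the ``$kP$-part'' of the cohomology to behave well, and that $kE$ is a separable subalgebra of $k(P\rtimes E)$ with $k(P\rtimes E)=kE\oplus J(kP)k(P\rtimes E)$, the latter being the Jacobson radical by the remark just made. Applying Lemma \ref{Czero} with $A=k(P\rtimes E)$ and $C=kE$, every class in $\HH^1(k(P\rtimes E))$ is represented by a derivation $f$ with $kE\subseteq\ker(f)$, which is the first assertion of (ii). For the second assertion, let $f\colon k(P\rtimes E)\to k(P\rtimes E)$ be any derivation vanishing on $kE$. I would argue that $f$ restricts to an $E$-stable derivation on $kP$ in an appropriate sense: for $u\in P$ and $y\in E$, the Leibniz rule together with $f(y)=0$ gives $f(uy)=f(u)y$, so $f$ is determined by $f|_{kP}$; and the relation ${}^{y}(uv)\cdots$ — more precisely the identity $f(\,{}^y u\,)=f(y u y^{-1})={}^y f(u)$, which follows from $f(y)=0=f(y^{-1})$ and Leibniz — shows that $f|_{kP}\colon kP\to k(P\rtimes E)$ commutes with the conjugation action of $E$, i.e.\ is $E$-stable. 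By part (i), $\Im(f|_{kP})\subseteq J(k(P\rtimes E))$. Since $\Im(f)$ is spanned by the elements $f(uy)=f(u)y$ with $u\in P$, $y\in E$, and $J(k(P\rtimes E))$ is a right ideal, we conclude $\Im(f)\subseteq J(k(P\rtimes E))$, completing (ii).

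I expect the main obstacle to be the bookkeeping in (ii) verifying that a derivation on $k(P\rtimes E)$ vanishing on $kE$ genuinely restricts to an $E$-\emph{stable} derivation $kP\to k(P\rtimes E)$, as opposed to merely a derivation; this is where one must use $f(y)=0$ for $y\in E$ to transfer the conjugation action through $f$, and it is the linchpin that lets part (i) apply. Everything else — the bimodule decomposition, the identification of $J(kP)k(P\rtimes E)$ with the radical, and the invocation of Lemma \ref{Czero} — is routine once this point is nailed down. An alternative, slightly more conceptual route for (ii) would be to invoke the isomorphism \ref{HH-normal-2} with $N=P$, under which $\HH^1(k(P\rtimes E))\cong\HH^1(kP)^E\oplus(\bigoplus_{e\in E\setminus\{1\}}\HH^1(kP;kPe))^E$, and apply Proposition \ref{Pe-prop} summand-by-summand; but the direct argument above keeps everything at the level of derivations, which is what the statement asks for.
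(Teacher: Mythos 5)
Your proposal is correct and follows essentially the same route as the paper: part (i) by decomposing $k(P\rtimes E)=\oplus_{e\in E}\,kPe$ into $E$-stable $kP$-$kP$-bimodule summands and applying Proposition \ref{Pe-prop} to each, together with $J(k(P\rtimes E))=J(kP)\cdot k(P\rtimes E)$, and part (ii) via Lemma \ref{Czero} with $C=kE$ and the observation that a derivation vanishing on $kE$ is a $kE$-$kE$-bimodule map, hence restricts to an $E$-stable derivation on $kP$ to which (i) applies. Your explicit verification of the $E$-stability of $f|_{kP}$ is exactly the point the paper handles by noting $f$ is a $kE$-$kE$-bimodule endomorphism, so no gap remains.
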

 
\begin{proof}
Since $E$ is a $p'$-group, the algebra $kE$ is separable, and hence 
we have $J(k(P\rtimes E))=$ $J(kP) \cdot k(P\rtimes E))$.
Since $E$ is abelian, by Lemma \ref{twisted2},  $f$ is the sum of $E$-stable
derivations $kP\to kPe$, with $e\in E$, and hence (i) follows from 
Proposition \ref{Pe-prop}. We have
$k(P\rtimes E)=$ $kE \oplus J(k(P\rtimes E))$, and thus,
 by Lemma \ref{Czero}, every class in $\HH^1(k(P\rtimes E))$
 is represented by a derivation $f$ which vanishes on $kE$. The Leibniz 
 rule implies that  $f$ is then a $kE$-$kE$-bimodule endomorphism 
 of $k(P\rtimes E)$. 
 Thus $f$ is determined  by its restriction  $f|_{kP} : kP \to k(P\rtimes E)$. 
 It follows from (i) that $f$ sends $kP$ to 
 $J(k(P\rtimes E))$.
 Since $f$ is in particular a right $kE$-homomorphism, $f$ sends $kPe$ to 
 $J(k(P\rtimes E))$ for all $e\in E$, whence (ii).
 \end{proof}

\begin{Lemma} \label{lem3}
Let $P$ be a non-trivial finite abelian $p$-group, and set $J=J(kP)$. Denote by 
$\Der_1(kP)$ the Lie subalgebra of $\Der(kP)=$ $\HH^1(kP)$ of derivations 
which preserve $J$. Set $n=\dim_k(J/J^2)$. The canonical map
$$\Der_1(kP) \to \End_k(J/J^2)\cong \gl_n(k)$$
is a surjective Lie algebra homomorphism. The kernel of this homomorphism
is the nilpotent ideal $\Der_2(kP)$ of derivations with image contained in $J^2$.
\end{Lemma}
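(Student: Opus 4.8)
The plan is to establish the three assertions in order: that the canonical map is a well-defined Lie algebra homomorphism, that it is surjective, and that its kernel is exactly $\Der_2(kP)$, a nilpotent ideal. The first point is essentially formal: since $P$ is abelian, every derivation on $kP$ preserves $J = I(kP)$ (indeed $f(x-1) = f(x) = (x-1)f(x)\cdot$\,$\ldots$ — more directly, $J = [kP,kP]kP + J^{[p]}$-type considerations are not even needed here, one just uses that $f$ preserves the augmentation ideal because $f(1)=0$ and $J = \ker(\epsilon)$ with $\epsilon\circ f$ a derivation $kP\to k$ into a field, hence zero). Thus $\Der_1(kP) = \Der(kP) = \HH^1(kP)$ (the equality with $\HH^1$ holds since $\IDer(kP)=0$ as $kP$ is commutative). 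Any derivation preserving $J$ also preserves $J^2$ by the Leibniz rule, so it induces a $k$-linear endomorphism of $J/J^2$; the assignment $f\mapsto \bar f$ is visibly $k$-linear and bracket-preserving, so we obtain a Lie algebra homomorphism $\Der_1(kP)\to\End_k(J/J^2)\cong\gl_n(k)$.

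For surjectivity I would argue as follows. By Lemma \ref{PhiP-Lemma} we may identify $J/J^2$ with $P/\Phi(P)$, and since $P$ is abelian, $P\cong \prod_{i=1}^n C_{p^{a_i}}$ for suitable $a_i\geq 1$, with $P/\Phi(P)\cong (C_p)^n$. Using $\HH^1(kP)\cong kP\tenk \H^1(P;k)\cong kP\tenk \Hom(P,\Fp)\tenk k$ from \cite{Holma} (as invoked in the proof of Lemma \ref{lem1}), one has an explicit spanning set of derivations: for each generator $x_i$ of the $i$-th cyclic factor and each group element $u\in P$, the map $d_{i,u}$ determined by $d_{i,u}(x_j) = \delta_{ij}\,u\,(x_i-1)$ (extended via Leibniz) is a derivation. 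Taking $u=1$, the derivation $d_{i,1}$ sends $x_i-1\mapsto x_i-1$ and $x_j-1\mapsto 0$ for $j\neq i$, hence acts on $J/J^2$ as the diagonal idempotent $e_{ii}$; while for $i\neq j$ a derivation sending $x_i-1\mapsto x_j-1$ and all other generators to $0$ — which exists whenever the order of $x_j$ divides that of $x_i$ — maps to $e_{ij}$. To hit the off-diagonal matrix units $e_{ij}$ with $|x_j| > |x_i|$, one instead uses $x_i-1\mapsto (x_j-1)^{\,|x_i-1|\text{-compatible power adjustment}}$; the cleanest route is to reduce modulo $\Phi(P)$ first: by Lemma \ref{lem1}(ii) the map $\HH^1(kP)\to\HH^1(kP/\Phi(P))$ is surjective, and $P/\Phi(P)$ is elementary abelian, so it suffices to prove surjectivity for $kP$ with $P\cong (C_p)^n$. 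For elementary abelian $P$ all cyclic factors have the same order $p$, the derivations $d_{i,x_j}$ above are unobstructed, and their images $\{e_{ij} : 1\le i,j\le n\}$ span all of $\gl_n(k)$. Since the composite $\Der_1(kP)\to\Der_1(kP/\Phi(P))\to\gl_n(k)$ equals the map for $kP$ (both are induced by the same identification $J/J^2\cong J(kP/\Phi(P))/J(kP/\Phi(P))^2$, compatibly), surjectivity follows.

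Finally, the kernel consists of those derivations inducing $0$ on $J/J^2$, i.e.\ those sending $J$ into $J^2$; by the Leibniz rule such a derivation has image in $J^2$ (it kills $k\cdot 1$ and sends $J$ to $J^2$, and $J^2$ is generated multiplicatively), so the kernel is precisely $\Der_2(kP)$ in the notation of Proposition \ref{rad-Prop}. That $\Der_2(kP)$ is a nilpotent ideal is then immediate from Proposition \ref{rad-Prop}(iii) (with $\ell\ell(kP)$ finite since $kP$ is a finite-dimensional local algebra). I expect the main obstacle to be the surjectivity step, specifically producing the off-diagonal matrix units in the presence of cyclic factors of differing orders; the reduction to the elementary abelian case via Lemma \ref{lem1}(ii) is the device that sidesteps this, and verifying the compatibility of the two identifications of $J/J^2$ is the one point that needs a careful (though short) check.
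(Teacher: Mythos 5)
Your overall strategy is the same as the paper's: reduce to the elementary abelian case via Lemma \ref{lem1}, hit the matrix units of $\End_k(J/J^2)$ with explicit derivations prescribed on the elements $x_i-1$, and identify the kernel with $\Der_2(kP)$, whose nilpotency comes from Proposition \ref{rad-Prop} (iii). That outline is fine, and your worry about cyclic factors of different orders is unnecessary: in characteristic $p$ the relation $x_i^{p^{a_i}}=1$ imposes no condition on a derivation, since $f(x_i^{p^{a_i}})=p^{a_i}x_i^{p^{a_i}-1}f(x_i)=0$ automatically, so a derivation with $x_i-1\mapsto x_j-1$ and all other generators to $0$ exists for any finite abelian $p$-group, and the reduction to the elementary abelian case could even be bypassed.

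However, your parenthetical claim that every derivation on $kP$ preserves $J$, so that $\Der_1(kP)=\Der(kP)=\HH^1(kP)$, is false, and the justification (``$\epsilon\circ f$ is a derivation into a field, hence zero'') is wrong: derivations $kP\to k$, with $kP$ acting through the augmentation, form $\H^1(P;k)\cong\Hom(P,k)\neq 0$. Concretely, for $P=C_p$ we have $kP\cong k[t]/(t^p)$ with $t=x-1$, and the derivation $d/dt$ sends $t$ to $1\notin J$; this is exactly why $\HH^1(kC_p)$ is the full Witt algebra while $\Der_1(kC_p)$ is a proper subalgebra, and why the paper needs the hypothesis $[P,E]=P$ in Corollary \ref{Pe-cor-1}. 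The error is not purely cosmetic in your argument: in the surjectivity step you lift a derivation from $k(P/\Phi(P))$ back to $kP$ via Lemma \ref{lem1} (ii), and you need the lift to lie in $\Der_1(kP)$ --- automatic if $\Der_1(kP)=\Der(kP)$, but that identity fails, so it must be argued. The repair is immediate: the kernel of $kP\to k(P/\Phi(P))$ is contained in $J$, so $J$ is the full preimage of $J(k(P/\Phi(P)))$, and hence any derivation of $kP$ inducing a $J$-preserving derivation of the quotient itself preserves $J$. With that remark (and the compatibility of the two identifications of $J/J^2$, which you correctly flag and which holds because the kernel of $J\to J(k(P/\Phi(P)))$ lies in $J^p\subseteq J^2$), your proof is correct and coincides with the paper's.
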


\begin{proof}
By Lemma \ref{lem1} we may assume that $P$ is elementary abelian.
Since $n$ is the rank of $P$ (cf. Lemma \ref{PhiP-Lemma}), we may write
$P=\prod_{i=1}^n \langle x_i\rangle$. For any two $i$, $j$ such that $1\leq i, j\leq n$
there is a unique derivation $d_{i,j}$ on $kP$ which sends $x_i-1$ to $x_j-1$ and
$x_{i'}-1$ to $0$, where $i'\neq i$, $1\leq i'\leq n$. Since the image of the set
$\{x_i-1\}_{1\leq i\leq n}$ in $J/J^2$ is a $k$-basis, the first statement follows,
and the second statement follows from Proposition \ref{rad-Prop}.
\end{proof}

\begin{Lemma} \label{lem4}
Let $P$ be a non-trivial finite abelian $p$-group, and set $J=J(kP)$. Let $E$ be 
a finite $p'$-group acting on $P$ such that $[P,E]=P$. Then
$\Der(kP)^E=$ $\Der_1(kP)^E$. Write
$$J/J^2 \cong \oplus_{j=1}^r\ S_j^{\oplus n_j}$$
with pairwise non-isomorphic simple $kE$-modules $S_j$ and positive integers 
$n_j$ ($1\leq j \leq r$). Then $\F_j = \End_{kE}(S_j)$ is a finite-dimensional commutative 
extension field of $k$, where $1\leq j\leq r$, and the  canonical map
$\Der_1(kP)\to \End_k(J/J^2)$ induces a surjective Lie algebra homomorphism
$$\Der(kP)^E \to \End_{kE}(J/J^2)\cong \prod_{j=1}^r \gl_{n_j}(\F_j).$$
The kernel of this Lie algebra homomorphism is the nilpotent
ideal $\Der_2(kP)^E$ of $E$-stable derivations on $kP$  with image in $J^2$.
\end{Lemma}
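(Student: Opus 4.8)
The plan is to deduce the statement from Lemma \ref{lem3} together with Corollary \ref{Pe-cor-1} by taking $E$-fixed points throughout, using that $E$ is a $p'$-group so that the operation of taking $E$-fixed points is exact on $kE$-modules. First I would record that $\Der(kP)^E=\Der_1(kP)^E$: every $E$-stable derivation $f$ on $kP$ satisfies $\Im(f)\subseteq J(kP)$ by Corollary \ref{Pe-cor-1}, and the Leibniz rule then forces $f(J^2)\subseteq J^3\subseteq J^2$, so $f$ preserves $J=J(kP)$, i.e. $f\in\Der_1(kP)$; the reverse inclusion is trivial. Hence the canonical map $\Der_1(kP)\to\End_k(J/J^2)$ from Lemma \ref{lem3} restricts to a map on $\Der(kP)^E$, and since that map is $E$-equivariant (with $E$ acting on $\End_k(J/J^2)$ by conjugation through the $kE$-module structure of $J/J^2$), its image lands in the fixed-point subalgebra $\End_k(J/J^2)^E=\End_{kE}(J/J^2)$.

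Next I would address surjectivity onto $\End_{kE}(J/J^2)$. Since $E$ is a $p'$-group, the short exact sequence of $kE$-modules
$$0\to\Der_2(kP)\to\Der_1(kP)\to\End_k(J/J^2)\to 0$$
from Lemma \ref{lem3} stays exact after applying $(-)^E$, because $\mathrm{H}^1(E,-)=0$ on $kE$-modules in the coprime case (equivalently, the functor of $E$-fixed points is exact here). This immediately gives that $\Der_1(kP)^E=\Der(kP)^E$ surjects onto $\End_k(J/J^2)^E=\End_{kE}(J/J^2)$, with kernel exactly $\Der_2(kP)^E$. For the identification of the target, I would decompose the semisimple $kE$-module $J/J^2\cong\oplus_{j=1}^r S_j^{\oplus n_j}$ with the $S_j$ pairwise non-isomorphic simple; Schur's lemma gives $\End_{kE}(S_j)=\F_j$ a finite-dimensional division algebra over $k$, and since $E$ is abelian (here I would invoke the hypothesis that $E$ is abelian — wait, actually Lemma \ref{lem4} only assumes $E$ is a $p'$-group, not abelian, so I should be careful) the $\F_j$ are commutative; more precisely, over any field each simple module of a finite $p'$-group... hmm, this is the point to be cautious about. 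Since the statement asserts $\F_j$ is a \emph{commutative} field, and $E$ need not be abelian, the argument must be that $J/J^2$ carries extra structure: as the statement is phrased in Lemma \ref{lem4} with $E$ a $p'$-group acting on an abelian $p$-group $P$, one uses that $J(kP)/J(kP)^2\cong P/\Phi(P)\otimes_{\F_p}k$ and the $E$-action on $P/\Phi(P)$ factors through a group whose relevant simple modules have commutative endomorphism rings — in any case, once commutativity of the $\F_j$ is in hand, $\End_{kE}(J/J^2)\cong\prod_{j=1}^r M_{n_j}(\F_j)\cong\prod_{j=1}^r\gl_{n_j}(\F_j)$ as Lie algebras under the commutator bracket.

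Finally, the kernel $\Der_2(kP)^E$ is a Lie ideal in $\Der(kP)^E$ (it is the intersection of the ideal $\Der_2(kP)$ of Proposition \ref{rad-Prop}(iii) with $\Der(kP)^E$), and it is nilpotent as a Lie algebra because $\Der_2(kP)$ already is, by Proposition \ref{rad-Prop}(iii). This completes the list of assertions. The step I expect to require the most care is pinning down why $\F_j=\End_{kE}(S_j)$ is \emph{commutative} rather than merely a division algebra: the cleanest route is to observe that the relevant $kE$-module $J/J^2$ arises by scalar extension from $\F_p$ of the $\F_pE$-module $P/\Phi(P)$ (Lemma \ref{PhiP-Lemma}), and that the action of $E$ on $P/\Phi(P)$, an $\F_p$-vector space, makes $E$ act through a subgroup of $\GL(P/\Phi(P))$; combined with the coprimality, one reduces the endomorphism ring computation to a setting where Wedderburn's theorem (every finite division ring is a field) applies after base change, forcing each $\F_j$ to be a field. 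The remaining verifications — $E$-equivariance of the map in Lemma \ref{lem3}, exactness of $(-)^E$, and the Lie-theoretic identification of $\prod_j M_{n_j}(\F_j)$ with $\prod_j\gl_{n_j}(\F_j)$ — are routine.
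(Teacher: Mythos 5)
Your proposal is correct and follows essentially the same route as the paper: Corollary \ref{Pe-cor-1} gives $\Der(kP)^E=\Der_1(kP)^E$, taking $E$-fixed points of the surjection in Lemma \ref{lem3} preserves surjectivity and identifies the kernel as the nilpotent ideal $\Der_2(kP)^E$ since $|E|$ is prime to $p$, and the isotypic decomposition of the semisimple $kE$-module $J/J^2$ yields $\End_{kE}(J/J^2)\cong\prod_j\gl_{n_j}(\F_j)$. The commutativity of the $\F_j$, which you rightly flagged as the one delicate point, is handled correctly by your reduction to $\Fp E$ and Wedderburn's little theorem (the paper simply leaves this implicit), since $kE=k\ten_{\Fp}\Fp E$ with $\Fp E$ split into matrix algebras over finite fields.
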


\begin{proof}
By Lemma \ref{Pe-cor-1} we have $\Der(kP)^E = $ $\Der_1(kP)^E$. The surjective
map $\Der_1(kP)\to$ $\End_k(J/J^2)$ from Lemma \ref{lem3} remains
surjective upon taking $E$-fixed points since $E$ has order prime to $p$.
Thus this map induces a surjective Lie algebra homomorphism
$\Der(kP)^E\to$ $\End_{kE}(J/J^2)$ with a nilpotent kernel as stated.
The rest follows from decomposing the semisimple $kE$-module $J/J^2$
as a direct sum of its isotypic components.
\end{proof}

\begin{Lemma} \label{lem5}
Let $P$ be a non-trivial  finite $p$-group and $E$ a finite $p'$-group acting on $P$.
The $kE$-module $J(kP)/J(kP)^2$ is multiplicity free if and only if the
$\Fp E$-module $P/\Phi(P)$ is multiplicity free.
\end{Lemma}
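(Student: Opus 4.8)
The plan is to reduce everything to the module isomorphism already available in Lemma~\ref{PhiP-Lemma}, which identifies $P/\Phi(P)$ with $J(\Fp P)/J(\Fp P)^2$ as $\Fp E$-modules, and then to track what happens under the scalar extension from $\Fp$ to $k$. Write $U = J(\Fp P)/J(\Fp P)^2$, so that Lemma~\ref{PhiP-Lemma} gives $P/\Phi(P) \cong U$ as $\Fp E$-modules, and (as already noted in the proof of Lemma~\ref{PhiP-Lemma-2}) we have $J(kP)/J(kP)^2 \cong k \ten_{\Fp} U$ as $kE$-modules, since $J(kP) = k \ten_{\Fp} J(\Fp P)$ and likewise for the square. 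Thus the statement to be proved is: $k \ten_{\Fp} U$ is multiplicity free as a $kE$-module if and only if $U$ is multiplicity free as an $\Fp E$-module.

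Since $E$ is a $p'$-group, both $\Fp E$ and $kE$ are semisimple, so in each case ``multiplicity free'' means that in a decomposition into simple modules no isomorphism class occurs twice. First I would recall the standard description of how simple modules behave under the extension $\Fp \subseteq k$: for a simple $\Fp E$-module $S$ with endomorphism division algebra $D = \End_{\Fp E}(S)$ (a finite field extension of $\Fp$, since everything in sight is a finite field), the $kE$-module $k \ten_{\Fp} S$ decomposes as a direct sum of simple $kE$-modules, each occurring with multiplicity equal to the multiplicity of $D$ inside $k \ten_{\Fp} D$; equivalently, the number of distinct simple summands equals $[k \ten_{\Fp} Z(D) : k]$ appropriately counted, and crucially these simple $kE$-modules arising from distinct simple $\Fp E$-modules are pairwise non-isomorphic. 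The key point I would isolate is that $\Fp$ is a \emph{perfect} field, so every finite field extension $D/\Fp$ is separable, hence $k \ten_{\Fp} D$ is a product of \emph{distinct} fields (no nilpotents, no repeated factors); consequently $k \ten_{\Fp} S$ is always a multiplicity-free $kE$-module, for any simple $S$.

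With that in hand the equivalence is almost immediate. If $U$ is multiplicity free, write $U \cong \bigoplus_{j} S_j$ with the $S_j$ pairwise non-isomorphic simple $\Fp E$-modules; then $k \ten_{\Fp} U \cong \bigoplus_j (k \ten_{\Fp} S_j)$, each $k \ten_{\Fp} S_j$ is multiplicity free by the perfectness argument, and summands coming from different $S_j$ share no common simple constituent, so the whole thing is multiplicity free. Conversely, if $U$ is not multiplicity free, some simple $S$ occurs with multiplicity $\geq 2$ in $U$, hence $k \ten_{\Fp} S$ occurs as a summand of $k\ten_{\Fp} U$ with multiplicity $\geq 2$; since $k \ten_{\Fp} S$ is nonzero it has a simple summand, which then occurs in $k \ten_{\Fp} U$ with multiplicity $\geq 2$, so $k \ten_{\Fp} U$ is not multiplicity free. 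This gives both directions.

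The main obstacle is purely bookkeeping: making precise the claim that simple $kE$-modules arising from distinct simple $\Fp E$-modules are non-isomorphic (which follows from the fact that the simple summands of $k \ten_{\Fp} S$ all have the same $\Fp E$-socle, namely a multiple of $S$, so their restrictions to $\Fp E$ detect $S$), and citing the right reference for the decomposition of $k \ten_{\Fp} S$ into simples over a separable ground field --- for instance the material on scalar extension of modules over group algebras in \cite[Section 1.12]{LiBookI} used already in the proof of Lemma~\ref{PhiP-Lemma-2}. No genuinely hard algebra is involved once one exploits that $\Fp$ is perfect; the only thing to be careful about is not to conflate ``$k$ is a splitting field'' with the general case, since the statement is asserted for arbitrary $k$ of characteristic $p$.
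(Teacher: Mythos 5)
Your proposal is correct and takes essentially the same route as the paper: both reduce, via Lemma \ref{PhiP-Lemma} and the isomorphism $J(kP)/J(kP)^2\cong k\ten_{\Fp} J(\Fp P)/J(\Fp P)^2$, to the statement that multiplicity-freeness is preserved and reflected under the scalar extension $\Fp\subseteq k$. The only difference is that the paper cites standard references (Isaacs, Feit, Curtis--Reiner) for this coefficient-extension fact, whereas you prove it directly from the finiteness (hence perfectness) of $\Fp$ and the behaviour of endomorphism algebras of simple modules under extension, which is a correct, self-contained substitute.
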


\begin{proof}
Since  $J(kP)/J(kP)^2 \cong k\ten_{\Fp} J(\Fp P)/J(\Fp P)^2$, it
follows from standard properties of coefficient extensions 
(e. g. \cite[Theorem 9.21.(b)]{Isaacs}, or \cite[Ch. I, Theorem 19.4]{Feitbook}, 
or \cite[(30.33)]{CRI}) 
 that the $\Fp E$-module $J(\Fp P)/(J(\Fp P)^2$ is multiplicity free if and only if
the $kE$-module $J(kP)/J(kP)^2$ is multiplicity free. Thus Lemma
\ref{PhiP-Lemma} implies the result.
\end{proof}

\section{Proof of Theorem \ref{frobenius-block-thm} and 
Theorem \ref{E-stable-derivations-thm}} 
\label{Proof-Section} 

\begin{proof}[Proof of Theorem \ref{E-stable-derivations-thm}]
Let $P$ be a finite abelian $p$-group and $E$ a finite $p'$-group acting on $P$ 
such that $[P,E]=P$.
Set $J=J(kP)$. 
By Lemma \ref{lem5},  $P/\Phi(P)$ is multiplicity free as an $\Fp E$-module if and only if
 $J/J^2$ is multiplicity free as a $kE$-module.
Since $P$ is abelian, we have
$\HH^1(kP)=$ $\Der(kP)$, hence $\HH^1(kP)^E=$ $\Der(kP)^E$. By Lemma \ref{Pe-cor-1} 
all $E$-stable derivations on $kP$ have image in $J(kP)$; that is, $\Der(kP)^E=$ 
$\Der_1(kP)^E$,  where the notation is as in Lemma \ref{lem4}. It follows from Lemma 
\ref{lem4} that $\HH^1(kP)^E$ is solvable if and only if $\Der_1(kP)^E/\Der_2(kP)^E$ 
is a solvable Lie algebra. 

If  $J/J^2$ is multiplicity free as a $kE$-module, then Lemma \ref{lem4} 
implies that  
$$\Der_1(kP)^E/\Der_2(kP)^E\cong \prod_{j=1}^r \gl_1(\F_j)$$ 
for some  commutative extension fields $\F_j$ of $k$,  and hence this Lie algebra 
is abelian,  which implies that  $\HH^1(kP)^E$ is solvable.

If $J/J^2$ is not multiplicity free, then $\Der_1(kP)^E/\Der_2(kP)^E$ has a direct factor
isomorphic to $\gl_n(\F)$ for some extension field $\F$ of $k$ and some integer $n\geq 2$.
Thus if $p$ is odd, then $\gl_n(\F)$ is not solvable, hence neither is $\HH^1(kP)^E$.
This completes the proof of Theorem \ref{E-stable-derivations-thm}.
\end{proof}

In order to complete the proof of Theorem \ref{frobenius-block-thm}, we
summarise the results in the case of a free $p'$-action on an abelian $p$-group.

\begin{Theorem} \label{frobenius-2-thm}
Let $P$ be a non-trivial finite abelian $p$-group and $E$ a $p'$-subgroup of 
$\Aut(P)$  acting freely on $P\setminus \{1\}$. Then the following hold.

\begin{itemize}
\item[{\rm (i)}] 
We have $\HH^1(k(P\rtimes E))\cong$  $\HH^1(kP)^E$ as Lie algebras.
\item[{\rm (ii)}] 
Suppose $E$ is non-trivial. 
Every class in $\HH^1(k(P\rtimes E))$ is represented by a derivation on 
$k(P\rtimes E)$ with image contained in the Jacobson radical $J(k(P\rtimes E))$. 
\item[{\rm (iii)}]
Suppose $E$ is non-trivial. If the  $\Fp E$-module $P/\Phi(P)$ is multiplicity free, 
then the Lie algebra $\HH^1(k(P\rtimes E))$ is solvable. The converse holds if $p$ 
is odd.
\end{itemize}
\end{Theorem}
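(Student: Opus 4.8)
The plan is to deduce all three statements from the results already established above; the only genuinely new ingredient is an elementary group-theoretic remark. First I would observe that if $E\neq 1$ and $E$ acts freely on $P\setminus\{1\}$, then $C_P(E)=1$, since a non-trivial element fixed by all of $E$ would force $E=1$; as $E$ has order prime to $p$ and $P$ is abelian, the coprime decomposition $P=[P,E]\times C_P(E)$ then gives $[P,E]=P$. (If $E=1$ there is nothing to prove: (i) is trivial and (ii), (iii) are vacuous.) This puts us in a position to invoke Corollary \ref{Pe-cor-1}, Proposition \ref{twisted-PE-prop} and Theorem \ref{E-stable-derivations-thm}.

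Statement (i) is then Lemma \ref{twisted2}(iii), or equivalently Proposition \ref{twisted-PE-prop}(iii), applied with the trivial $2$-cocycle $\alpha=1$, so that $k_\alpha(P\rtimes E)=k(P\rtimes E)$; the degree-$1$ case is precisely the claimed isomorphism of Lie algebras, realised by $f\mapsto\hat f$, where $f$ runs over $\Der(kP)^E=\HH^1(kP)^E$ (recall $P$ is abelian) and $\hat f$ is the unique extension of $f$ to $k(P\rtimes E)$ vanishing on $kE$, namely $\hat f(uy)=f(u)\cdot y$ for $u\in P$, $y\in E$.

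For (ii), since $E$ is a $p'$-group the subalgebra $kE$ is separable and $J(k(P\rtimes E))=J(kP)\cdot k(P\rtimes E)$. By (i) every class in $\HH^1(k(P\rtimes E))$ is represented by some $\hat f$ with $f\in\Der(kP)^E$, and Corollary \ref{Pe-cor-1} (using $[P,E]=P$) gives $\Im(f)\subseteq J(kP)$; hence $\hat f(uy)=f(u)\cdot y\in J(kP)\cdot k(P\rtimes E)=J(k(P\rtimes E))$, so this representative has image in the radical. For (iii), the Lie algebra isomorphism of (i) transports solvability, so $\HH^1(k(P\rtimes E))$ is solvable if and only if $\HH^1(kP)^E$ is; since $[P,E]=P$, Theorem \ref{E-stable-derivations-thm}(ii) then yields the equivalence with the multiplicity-freeness of the $\Fp E$-module $P/\Phi(P)$, with the converse needing $p$ odd, which is exactly (iii).

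The proof is essentially bookkeeping, and there is no serious obstacle. The two points that need a little care are: recording (i) as an isomorphism of Lie algebras rather than merely of graded vector spaces, so that solvability genuinely transfers in (iii); and obtaining (ii) through the explicit extension $\hat f$ of Proposition \ref{twisted-PE-prop} rather than through an abstract isomorphism, so that one actually exhibits a representative whose image lies in $J(k(P\rtimes E))$ — which is the content the statement asks for.
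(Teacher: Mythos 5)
Your argument is correct and follows essentially the same route as the paper: (i) via Lemma \ref{twisted2}(iii)/Proposition \ref{twisted-PE-prop} with trivial cocycle, the observation that $E\neq 1$ acting freely forces $[P,E]=P$, and then (ii) and (iii) from Theorem \ref{E-stable-derivations-thm} (your use of Corollary \ref{Pe-cor-1} together with the explicit extension $\hat f$ is exactly the content behind that reduction, cf. Corollary \ref{Pe-cor-2}(ii)). No gaps; the extra detail you supply on $C_P(E)=1$ and the coprime decomposition is the same justification the paper leaves implicit.
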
 

\begin{proof}
The first statement follows from Lemma \ref{twisted2} (iii) or Proposition
\ref{twisted-PE-prop}. The hypothesis $E\neq 1$ in (ii) and (iii) together with
the free action of $E$ on $P\setminus \{1\}$ imply that $[P,E]=P$. 
The remaining statements  follow from Theorem \ref{E-stable-derivations-thm}.
\end{proof}

\begin{proof}[Proof of Theorem \ref{frobenius-block-thm}]
With the notation and hypotheses of Theorem \ref{frobenius-block-thm}, by a result 
of  Puig \cite[6.8]{Puabelien} (see also \cite[Theorem 10.5.1]{LiBookII}) there is a 
stable equivalence of Morita type between $B$ and $k(P\rtimes E)$. By 
\cite[Theorem 10.7]{KLZ} this implies that there is a Lie algebra isomorphism 
$\HH^1(B)\cong$ $\HH^1(k(P\rtimes E))$. Thus Theorem \ref{frobenius-block-thm}
follows from Theorem \ref{frobenius-2-thm}.
\end{proof}

The following observation is a combination of some of the above results in 
conjunction with the structure theory of blocks with a normal defect group, slightly
generalising \cite[Proposition 5.2]{KL-Frobenius}, \cite[Theorem 3.2]{Murphy23}. 
We state this here for context  and for future reference.

\begin{Theorem} \label{normal-defect-thm} 
Let $G$ be a finite group and $B$ a  block with a non-trivial normal defect group 
$P$ and inertial quotient $E$. Suppose that $k$ is large enough for $B$.
Then $\HH^1(kP)^E$ is canonically isomorphic to a Lie subalgebra of $\HH^1(B)$.
If in addition  $E$ acts freely on $P\setminus \{1\}$, then 
$\HH^1(kP)^E\cong$ $\HH^1(B)$.
\end{Theorem}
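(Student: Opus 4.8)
The plan is to use the standard structure theory of blocks with a normal defect group to reduce the statement to the twisted group algebra case already handled above. Let $B$ be a block of $kG$ with a normal defect group $P$ and inertial quotient $E$, and suppose $k$ is large enough for $B$. By the theory of blocks with normal defect group (K\"ulshammer's structure theorem, see for instance \cite[Theorem 6.14.1]{LiBookII}), $B$ is Morita equivalent — in fact isomorphic as an interior $P$-algebra, up to the relevant identifications — to a twisted group algebra $k_\alpha(P\rtimes E)$ for some $\alpha\in Z^2(E;k^\times)$ inflated to $P\rtimes E$ via the canonical surjection $P\rtimes E\to E$. The first step is therefore to invoke this result to replace $B$ by $k_\alpha(P\rtimes E)$; since Morita equivalent algebras have isomorphic Hochschild cohomology as graded algebras, and since this isomorphism in degree $1$ is a Lie algebra isomorphism (Hochschild cohomology is Morita invariant including its Gerstenhaber structure), we get $\HH^1(B)\cong \HH^1(k_\alpha(P\rtimes E))$ as Lie algebras.

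Next I would apply Lemma \ref{twisted2}. By part (i) of that lemma there is a canonical graded isomorphism
$$\HH^*(k_\alpha(P\rtimes E)) \cong \HH^*(kP)^E \ \oplus\ \left(\bigoplus_{e\in E\setminus\{1\}} \HH^*(kP; kP\cdot e)\right)^E,$$
and as noted after \ref{HH-normal-2} (or directly from \ref{HH-normal-2} in the untwisted case, with the obvious modification for the twist, which only affects the bimodule structure on the summands $kP\cdot e$ and not the summand $\HH^*(kP)^E$ itself), the summand $\HH^1(kP)^E$ is a Lie subalgebra of $\HH^1(k_\alpha(P\rtimes E))$. Composing with the Lie algebra isomorphism $\HH^1(B)\cong\HH^1(k_\alpha(P\rtimes E))$ from the previous paragraph gives the canonical Lie algebra embedding $\HH^1(kP)^E\hookrightarrow \HH^1(B)$, proving the first assertion. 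One should note that this embedding is ``canonical'' in the sense that it does not depend on the choice of $\alpha$ within its cohomology class, since the summand $\HH^1(kP)^E$ is insensitive to $\alpha$; and the action of $E$ on $\HH^1(kP)$ is the one induced by the conjugation action of $E$ on $P$, which is intrinsic to the block via its fusion system.

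For the second assertion, suppose in addition that $E$ acts freely on $P\setminus\{1\}$. If $E=1$ there is nothing to prove (both sides equal $\HH^1(kP)$), so assume $E\neq 1$; then every non-trivial element of $E$ acts on $P$ without non-trivial fixed points, so by Lemma \ref{fpf-Lemma} the $kP$-module $kP\cdot e$ is projective for each $e\in E\setminus\{1\}$, hence $\HH^n(kP; kP\cdot e)\cong \H^n(P; kP\cdot e)=0$ for all $n\geq 1$. Therefore the direct sum over $e\neq 1$ in Lemma \ref{twisted2}(i) vanishes in degree $1$, and $\HH^1(k_\alpha(P\rtimes E))\cong \HH^1(kP)^E$; combined with $\HH^1(B)\cong \HH^1(k_\alpha(P\rtimes E))$ this yields $\HH^1(B)\cong\HH^1(kP)^E$. (Alternatively, once one knows $B\cong k_\alpha(P\rtimes E)$, one can cite Lemma \ref{twisted2}(iii) or Proposition \ref{twisted-PE-prop}(iii) directly.) The main obstacle — really the only non-formal input — is the first step: correctly invoking the structure theorem for blocks with normal defect group to identify $B$, up to Morita equivalence, with a twisted group algebra $k_\alpha(P\rtimes E)$, and checking that the hypothesis ``$k$ large enough for $B$'' is exactly what is needed to make $E$ (the inertial quotient) act on $P$ with the twisting cocycle $\alpha$ inflated from $E$. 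Everything after that is an application of Lemma \ref{twisted2} and Lemma \ref{fpf-Lemma} together with the Morita (equivalently, here, isomorphism) invariance of $\HH^1$ as a Lie algebra.
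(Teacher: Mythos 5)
Your proposal is correct and follows essentially the same route as the paper: invoke K\"ulshammer's structure theorem to replace $B$ up to Morita equivalence by $k_\alpha(P\rtimes E)$, use Morita invariance of the Lie algebra $\HH^1$, and then conclude via Lemma \ref{twisted2} (or Proposition \ref{twisted-PE-prop}), with Lemma \ref{fpf-Lemma} handling the free-action case. The only slip is the aside suggesting $B$ is \emph{isomorphic} to $k_\alpha(P\rtimes E)$; in general it is only Morita equivalent (a full matrix algebra over it), but since your argument only uses Morita invariance this does not affect the proof.
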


\begin{proof}
Let $B$ be a block of a finite group algebra $kG$ with a non-trivial defect group 
$P$ which is  normal in $G$. Suppose that $k$ is large enough for $B$. Then, by 
\cite[Theorem A]{Kuenormal}
(see \cite[Theorem 6.14.1]{LiBookII} for an exposition of related material) 
the block $B$ is Morita equivalent to a twisted group algebra of the form 
$k_\alpha(P\rtimes E)$,  where $E$ is a $p'$-subgroup of $\Aut(P)$ and $\alpha\in$
$Z^2(E,k^\times)$, inflated to $P\rtimes E$ via the canonical surjection 
$P\rtimes E\to E$. Thus $\HH^1(B)\cong$  $\HH^1(k_\alpha(P\rtimes E))$ as Lie 
algebras. Theorem \ref{normal-defect-thm} follows from Lemma
\ref{twisted2} or Proposition \ref{twisted-PE-prop}.
\end{proof}

\begin{Remark}
Since the Lie algebra $\HH^1(B)$ of a block $B$ of a finite group algebra 
$kG$ is preserved under stable equivalences of Morita type 
(cf. \cite[Theorem 10.7]{KLZ}),  it follows that the conclusion of Theorem 
\ref{normal-defect-thm} holds  if there is a stable  equivalence of Morita 
type between $B$ and its  Brauer correspondent;  this includes inertial blocks
(these are blocks which are Morita equivalent to their Brauer correspondents
via bimodules with endopermutation source; cf. \cite[2.16]{Puig11}).
\end{Remark}

\section{Proof of Theorem \ref{twisted-PE-thm} and related results} 
\label{proof-section-2}

\begin{proof}[Proof of Theorem \ref{twisted-PE-thm}]
Let $P$ be a non-trivial finite abelian $p$-group and $E$ an abelian $p'$-subgroup of 
$\Aut(P)$ with  $[P,E]=P$. 
Let $\alpha\in Z^2(E;k^\times)$ inflated to $P\rtimes E$ via the canonical
surjection $P\rtimes E\to$ $E$. We may assume that $\alpha$ is normalised.

Note that the condition $[P,E]=P$  forces $|P|\geq 3$. 
Since $E$ is an abelian $p'$-group, by  Lemma  \ref{twisted2}, we have a
canonical isomorphism
\[\HH^1(k_\alpha (P\rtimes E))\cong
\HH^1(kP)^E\oplus \bigoplus_{e\in E\setminus\{1\}} \H^1(P;kP\cdot e)^E.
\]
Again by Lemma \ref{twisted2},  for each $f\in \H^1(P; kP\cdot e)^E$, the 
corresponding element in 
$\HH^1(k_\alpha (P\rtimes E))$ under this isomorphism is represented by 
$d_f\in \Der(k_\alpha(P\rtimes E))$ such that for each $g\in P$, $d_f(g)=f(g)\cdot g$ 
and  $d_f$ vanishes on $E$.

Fix a nonidentity element $e\in E$. Set $T=C_P(e)$ and $F=$ $[P,\langle e\rangle]$. 
Since $e\neq 1$ it follows that the group $F$ is nontrivial. 
Since $P$ is abelian, by \cite[Theorem 4.34]{Isaacs2} or
by \cite[Theorem 5.2.3]{Gor}, we have 
$P=T\times F$. Using the formula \ref{conj-action-2} and that $P$ is abelian
we get that  conjugation action by  $g\in P$  on $kP\cdot e$ is equal to left 
multiplication with $g(^eg)^{-1}$. Thus, $T$ acts trivially on $kP\cdot e$ and $F$ acts
freely on $P\cdot e$. Using that $F$ is non-trivial, this implies $\H^1(F; kP\cdot e)=0$.
Furthermore, $kP\cdot e$ can be viewed as a $k(T\times F)$-module $k\tenk kP\cdot e$ 
where $k$ is the trivial $kT$-module, and $kP\cdot e$ is the free $kF$-module for
the conjugation action of $F$ on $kP\cdot e$. 
By the K\"unneth formula \ref{Kuenneth-groups-1}, we have
\[
\H^1(P;kP\cdot e)\cong \H^1(T\times F; k\tenk kP\cdot e)\cong 
\H^0(T;k)\tenk  \H^1(F; kP\cdot e)\oplus \H^1(T;k)\tenk \H^0(F;kP\cdot e)
\]
\[
\cong  \H^1(T;k)\tenk H^0(F;kP\cdot e) \cong \H^1(T;k)\tenk (kP\cdot e)^F.
\]
Any orbit in $P\cdot e$ under the conjugation action of $F$ is
equal to  $\{ta\cdot e: a\in F\}$, where $t\in T$.
Thus,  $(kP\cdot e)^F$ is spanned by the set
$\{ t(\sum_{a\in F} a)\cdot e: t\in T\}$.
It follows that the map $t \mapsto t(\sum_{a\in F} a)\cdot e$ induces an isomorphism
$$ kT \cong (kP\cdot e)^F,$$
where we note that $P$ acts trivially by conjugation on both sides. 
Since $E$ is abelian, the direct product decomposition $P=T\times F$ is stable
under the action of  $E$.  Thus the condition $[P,E]=P$ implies that  $[T,E]=T$ and 
$[F,E]=F$, so neither $T$ nor $F$ has order $2$. 
Since $F$ is non-trivial, so has order at least $3$, it follows that the socle element
$\sum_{a\in F} a$ of $kF$ is contained in $J(kF)^2$. Thus $(kP\cdot e)^F
\subseteq$ $kT\cdot J(kF)^2\cdot e\subseteq$ $J(kP)^2\cdot e$. It follows
that every class in $\HH^1(kP; kP\cdot e)^E$ has a representative with
image contained in $J(kP)^2\cdot e$. 
Let $D_2$ denote the image of $Der_2(k_\alpha(P\rtimes E))$ in 
$\HH^1(k_\alpha(P\rtimes E)$. 
By the above we have 
$\bigoplus_{e\in E\setminus\{1\}} \H^1(P;kP\cdot e)^E\subseteq D_2$.  Thus 
\begin{Statement} \label{HH1D2-statement}
\[  \HH^1(k_\alpha(P\rtimes E))= \HH^1(kP)^E+ D_2 .  \]
\end{Statement}
Theorem \ref{E-stable-derivations-thm} implies therefore that  every class in 
$\HH^1(k_\alpha(P\rtimes E))$ 
is represented by a derivation  with image contained in $J(k_\alpha(P\rtimes E))$.
Equivalently, the map
$\Der_1(k_\alpha(P\rtimes E))\to \HH^1(k_\alpha(P\rtimes E))$ is surjective.
By Corollary \ref{rad-Cor} (iv),
\begin{Statement}
we have that $\HH^1(k_\alpha(P\rtimes E))$ is solvable if 
and only if  $\HH^1(kP)^E$ is solvable. 
\end{Statement}
 Note that the action of $E$ on $\HH^1(kP)$ by taking conjugation in the twisted 
 group algebra does not depend on $\alpha$ by Statement \ref{conj-action-2}. 
 Thus, this action on $\HH^1(kP)$ coincides with the action of $E$ induced by the 
 usual conjugation of $E$ on $P$. The rest follows from Theorem
 \ref{E-stable-derivations-thm}.
\end{proof}

\begin{Remark}
The proof above can be refined to give a slightly stronger result. Using an 
isomorphism from \cite[Section 3.4]{Evens}, 
for $e$ a nontrivial element in $E$, we have 
$\H^1(T;k)\tenk (kP\cdot e)^F\cong$ $\H^1(T; (kP\cdot e)^F)$.
By the above proof, this is isomorphic to 
$\H^1(T; kT)\cong$ $\HH^1(kT)$. 
By Lemma \ref{Pe-cor-1}, every class in $\HH^1(kT)^E$ has a representative $d$ with 
image in $J(kT)$, i.e every class in $\H^1(T;kT)^E$ is represented by a $1$-cocycle 
$f\in Z^1(T; kT)$ with image contained in $J(kT)$. Mapping $f$ to the corresponding
$1$-cocycle $f'$ in $Z^1(T; (kP\cdot e)^F)$, the proof
 above shows that the  image of  $f'$ is contained  in 
 $J(kT) J(kF)^2\cdot e\subseteq$ $J(kP)^3\cdot e$. 
\end{Remark}

As mentioned in the Introduction, Theorem \ref{twisted-PE-thm} admits the following
equivalent reformulation.

\begin{Theorem} \label{central-PE-thm}
Let $P$ be a non-trivial finite abelian $p$-group and $E$ be a $p'$-group acting on $P$. 
Suppose that $[P,E]=P$, $C_E(P)\leq \mathrm{Z}(E)$ and that $E/C_E(P)$ is abelian. 
Every class in $\HH^1(k(P\rtimes E))$ is represented by a derivation on 
$k(P\rtimes E)$ with image contained in the Jacobson radical 
$J(k(P\rtimes E))$.
If the semisimple  $\Fp E$-module $P/\Phi(P)$ is multiplicity free, then
the Lie algebra $\HH^1(k(P\rtimes E))$ is solvable. The converse holds if $p$ is odd.
\end{Theorem}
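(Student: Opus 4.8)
The plan is to derive Theorem~\ref{central-PE-thm} from Theorem~\ref{twisted-PE-thm} by passing from the group algebra $k(P\rtimes E)$ to a twisted group algebra of $P\rtimes \bar E$ where $\bar E = E/C_E(P)$. Set $Z = C_E(P)$, so that by hypothesis $Z\leq \mathrm{Z}(E)$ and $\bar E$ is abelian. Since $Z$ acts trivially on $P$, the group $E$ acts on $P$ through $\bar E$, and $\bar E$ is (isomorphic to) a $p'$-subgroup of $\Aut(P)$; the hypothesis $[P,E]=P$ says precisely $[P,\bar E]=P$. The first step is to decompose $kE$ as a product of matrix algebras over field extensions of $k$ (here one should note $k$ need not be a splitting field, but $kZ$ is still split-semisimple-free in the sense that its blocks are field extensions since $Z$ is central; more robustly, decompose $kE$ into blocks $kE = \prod_i kE\cdot b_i$). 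Because $Z\leq \mathrm{Z}(E)$ acts trivially on $P$, each central idempotent $b_i$ of $kZ$ is central in $k(P\rtimes E)$, giving a block decomposition $k(P\rtimes E) = \prod_i k(P\rtimes E)b_i$, and correspondingly $\HH^1(k(P\rtimes E)) \cong \bigoplus_i \HH^1(k(P\rtimes E)b_i)$ as Lie algebras. A derivation has image in the Jacobson radical of the product iff each component does, and the product is solvable iff each factor is; so it suffices to treat each factor $k(P\rtimes E)b_i$.

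The key step is to identify each block $k(P\rtimes E)b_i$ with a twisted group algebra of the form $M_{d_i}(k_{\alpha_i}(P\rtimes \bar E))$ for some $\alpha_i\in Z^2(\bar E;k^\times)$. This is the standard theory of central extensions versus twisted group algebras: the block $kE\cdot b_i$ corresponding to a block $e_i$ of the central subgroup $Z$ is Morita equivalent to a twisted group algebra $k_{\alpha_i}\bar E$ via the theory of the group $E$ as a central extension $1\to Z\to E\to \bar E\to 1$ (cf.\ the remarks in Section~\ref{twisted-Section} on central extensions; see also \cite[Corollary 5.3.8]{LiBookI}). Since $Z$ is central in $E$ and acts trivially on $P$, this extends multiplicatively: $k(P\rtimes E)b_i$ is isomorphic to a matrix algebra over $k_{\alpha_i}(P\rtimes \bar E)$, where $\alpha_i$ is inflated from $\bar E$ via $P\rtimes\bar E\to\bar E$. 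Morita equivalence preserves $\HH^1$ as a Lie algebra and preserves the property that a class is represented by a derivation with image in the Jacobson radical (the Jacobson radical and the center are Morita invariant, and $\Der_1/\Der_2$-type statements transfer); alternatively one can invoke that $\HH^1$ and the relevant radical filtration are invariant under Morita equivalence directly. Thus $\HH^1(k(P\rtimes E)b_i)\cong \HH^1(k_{\alpha_i}(P\rtimes\bar E))$ compatibly with the statements we need.

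Now apply Theorem~\ref{twisted-PE-thm} to each $k_{\alpha_i}(P\rtimes\bar E)$: since $\bar E$ is an abelian $p'$-subgroup of $\Aut(P)$ with $[P,\bar E]=P$, part (i) gives that every class in $\HH^1(k_{\alpha_i}(P\rtimes\bar E))$ is represented by a derivation with image in the Jacobson radical, and part (ii) gives that if $P/\Phi(P)$ is multiplicity free as an $\F_p\bar E$-module then $\HH^1(k_{\alpha_i}(P\rtimes\bar E))$ is solvable, with the converse for $p$ odd. The final step is to match the hypothesis: an $\F_p E$-module on which $Z = C_E(P)$ acts trivially is the same thing as an $\F_p\bar E$-module, and multiplicity-freeness is unaffected by inflation along $E\to\bar E$ (the simple constituents correspond bijectively), so $P/\Phi(P)$ is multiplicity free over $\F_p E$ iff it is over $\F_p\bar E$. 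Assembling the blocks gives all three assertions of Theorem~\ref{central-PE-thm}; for the converse when $p$ is odd, note that if $P/\Phi(P)$ is not multiplicity free then it fails to be multiplicity free over $\bar E$, so at least one $\alpha_i$-summand $\HH^1(k_{\alpha_i}(P\rtimes\bar E))$ is non-solvable, hence so is the whole $\HH^1(k(P\rtimes E))$.

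The main obstacle I anticipate is making the identification $k(P\rtimes E)b_i \cong M_{d_i}(k_{\alpha_i}(P\rtimes\bar E))$ clean and rigorous, in particular checking that the cocycle $\alpha_i$ is genuinely inflated from $\bar E$ (so that Theorem~\ref{twisted-PE-thm} applies verbatim) rather than a more general cocycle on $P\rtimes\bar E$ — this uses crucially that $Z$ is central \emph{in $E$} and acts trivially \emph{on $P$}, so that the relevant cocycle is supported on the $\bar E$-direction. A secondary, more routine point is confirming that ``every class is represented by a derivation with radical image'' transports across the Morita equivalence; this is most safely phrased by transporting the stronger condition of Corollary~\ref{rad-Cor}(iii) (surjectivity of $\Der_1\to\HH^1$ together with $[\Der_1,\Der_1]\subseteq\Der_2+\IDer$), which is manifestly Morita invariant, rather than arguing about individual derivations.
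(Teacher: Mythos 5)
Your strategy is precisely the alternative route that the paper mentions and deliberately does not take: reducing Theorem \ref{central-PE-thm} to Theorem \ref{twisted-PE-thm} via the correspondence between central extensions and twisted group algebras. The paper's own proof is instead a self-contained repetition of the argument for Theorem \ref{twisted-PE-thm} inside $k(P\rtimes E)$: with $C=C_E(P)$ it decomposes $\HH^1(k(P\rtimes E))$ as $\HH^1(kP)^E\tenk kC$ plus the summands $\HH^1(kP;kP\cdot e)^E$ for $e\in E\setminus C$, shows by the K\"unneth formula \ref{Kuenneth-groups-1} that the latter are represented by derivations with image in $J(kP)^2\cdot e$, and then uses Corollary \ref{rad-Cor} (iv) and formula \ref{KuennethLie1} to reduce solvability to that of $\HH^1(kP)^E$, i.e. to Theorem \ref{E-stable-derivations-thm}. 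Your approach is shorter modulo the structure theory of central $p'$-extensions; the paper's avoids that theory and avoids any transport of the radical-image property across an equivalence.

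There is, however, a genuine gap at your key identification, exactly at the point you flag: the claim $k(P\rtimes E)b_i\cong M_{d_i}(k_{\alpha_i}(P\rtimes\bar E))$ with $\alpha_i\in Z^2(\bar E;k^\times)$ is false over an arbitrary field $k$ of characteristic $p$, which is the generality of the theorem; it requires $k$ to be a splitting field for $Z=C_E(P)$. (Also, you should decompose via the primitive idempotents of $kZ$, not the blocks of $kE$: block idempotents of $kE$ need not commute with $P$.) In general $kZb_i=k_i$ is a finite separable field extension of $k$, and $k(P\rtimes E)b_i\cong (k_i)_{\alpha_i}(P\rtimes \bar E)$ is a twisted group algebra over $k_i$, with $\alpha_i\in Z^2(\bar E;k_i^\times)$ inflated as you describe (your point that centrality of $Z$ in $E$ together with its trivial action on $P$ forces the cocycle to be inflated from $\bar E$ is correct); there is no matrix amplification. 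For instance, for $P=C_3$, $E=C_2\times C_7$ with $C_2$ inverting $P$ and $C_7=Z$ central, the non-principal block of $\F_3(P\rtimes E)$ is $\F_{3^6}(C_3\rtimes C_2)$, of $\F_3$-dimension $36$, which is not of the form $M_d(k_\alpha(C_3\rtimes C_2))$. The repair is routine but must be made: apply Theorem \ref{twisted-PE-thm} over the ground field $k_i$ (its hypotheses and the multiplicity-freeness condition on $P/\Phi(P)$ are statements over $\Fp$, independent of the coefficient field), and observe that every $k$-linear derivation of a $k_i$-algebra is automatically $k_i$-linear because $k_i/k$ is separable and $k_i$ is central, so $\HH^1$, the radical, and solvability are the same whether computed over $k$ or $k_i$; alternatively, extend scalars to a splitting field of $Z$ and descend. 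With that fix the rest of your argument (blockwise assembly, the equivalence of multiplicity-freeness over $\Fp E$ and $\Fp\bar E$, and the converse for odd $p$) goes through, and no Morita-invariance of the radical-image property is needed at all.
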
 

One way to prove Theorem \ref{central-PE-thm} would be to play it back to Theorem 
\ref{twisted-PE-thm}
via the standard correspondence between second cohomology  and central
group extensions. For convenience, we will present for convenience a self-contained 
proof which  directly translates
the proof of Theorem \ref{twisted-PE-thm} to the situation of Theorem 
\ref{central-PE-thm}.

\begin{proof}[{Proof of Theorem \ref{central-PE-thm}}]
Let $P$ be a non-trivial finite abelian $p$-group and $E$ be a $p'$-subgroup acting 
on $P$.  Set $C=$ $C_E(P)$. Suppose that $[P,E]=P$, $C\leq \mathrm{Z}(E)$ 
and  that $E/C$ is abelian. In particular, since $C$ acts trivially on $P$, $C$ also lies in 
the center of $P\rtimes E$. 
As before, we will use the canonical isomorphisms
$\HH^1(k(P\rtimes E))\cong$ $\H^1(P\rtimes E;k(P\rtimes E))\cong$
$ \H^1(P;k(P\rtimes E))^E.$
As a $kP$-module with respect to the conjugation action, we have 
$k(P\rtimes E)\cong \oplus_{e\in E}\ kP\cdot e$. 
Thus, we get $\HH^1(k(P\rtimes E))\cong$ $\H^1(P;\oplus_{e\in E}\ kP\cdot e)^E$
Since $C$ is a central $p'$-subgroup of $P\rtimes E$, and since for every $c\in C$ 
we have  $kP\cdot c\cong kP$ as a  $kP$-module, it follows that 
$\H^1(P; kP\times C) \cong \H^1(P; kP)\tenk kC$.
This space is $E$-invariant, with $E$ acting trivially on $kC$, and hence
$$\H^1(P; k(P\times C))^{E}\cong \H^1(P; kP)^{E}\tenk kC\cong
\HH^1(kP)^E\tenk kC. $$
Using the isomorphism \ref{HH-normal-2} (with trivial $\alpha$) it follows that 
$$\HH^1(k(P\rtimes E)) \cong \HH^1(kP)^E \tenk kC \ \oplus 
\ (\oplus_{e\in E\setminus C}\ \HH^1(kP; kP\cdot e))^E.$$
In particular, $\HH^1(kP)^E\tenk kC$ is a Lie subalgebra of $\HH^1(k(P\rtimes E))$. 
In order to analyse the second summand 
let $e\in E\setminus C$.
Set $T=C_P(e)$ and $F=$ $[P,\langle e\rangle]$. 
Since $e\not\in C$ it follows that the group $F$ is nontrivial. 
Since $P$ is abelian, by \cite[Theorem 4.34]{Isaacs2} or
by \cite[Theorem 5.2.3]{Gor}, we have 
$P=T\times F$. Notice that for every $g\in P$, the conjugation action of $g$ 
on $kP\cdot e$ is equal to  left 
multiplication with $g(^eg)^{-1}$. Thus, $T$ acts trivially on $kP\cdot e$ and 
$F$ acts freely on $P\cdot e$. Using that $F$ is nontrivial, this implies 
$\H^1(F; kP\cdot e)=0$.
Furthermore, $kP\cdot e$ can be viewed as a $k(T\times F)$-module 
$k\tenk kP\cdot e$  where $k$ is the trivial $kT$-module, and $kP\cdot e$ 
is the free $kF$-module for the conjugation action of $F$ on $kP\cdot e$. 
Just as in the proof of Theorem \ref{twisted-PE-thm}, by the K\"unneth formula 
\ref{Kuenneth-groups-1}, we have
$$\H^1(P;kP\cdot e)\cong \H^1(T;k)\tenk (kP\cdot e)^F,$$
and  the map $t \mapsto t(\sum_{a\in F} a)\cdot e$ induces an isomorphism
$$ kT \cong (kP\cdot e)^F.$$
Since $E/C$ is abelian, or equivalently, since the image of $E$ in $\Aut(P)$ is abelian,
it follows that the direct product decomposition $P=T\times F$ is still stable 
under the action of  $E$.  An obvious adaptation of the last part of the proof
of Theorem \ref{twisted-PE-thm} yields that 
$[F,E]=F\neq 1$ has order at least $3$, so as before the socle element
$\sum_{a\in F} a$ of $kF$ is contained in $J(kF)^2$, whence  $(kP\cdot e)^F
\subseteq$ $kT\cdot J(kF)^2\cdot e\subseteq$ $J(kP)^2\cdot e$. It follows
that every class in $\H^1(P;kP\cdot e)$ has a representative with
image contained in $J(kP)^2\cdot e$, so the  same holds for 
$\H^1(P;\oplus_{e\in E\setminus C}\ kP\cdot e)^E.$ Thus, every class in
 $\HH^1(kP; \oplus_{e\in E\setminus C}\ kP\cdot e)^E$ has a representative 
 with image contained in $J(kP)^2\cdot e$. Let $D_2$ denote the image of 
 $\Der_2(k(P\rtimes E))$ in  $\HH^1(k(P\rtimes E)$. 
By the above, we have 
$\HH^1(kP;\oplus_{e\in E\setminus C}\ kP\cdot e)^E\subseteq D_2$,
hence 
\begin{Statement}
$$\HH^1(k(P\rtimes E))=  \HH^1(kP)^E \tenk kC+ D_2.$$
\end{Statement}
Theorem \ref{E-stable-derivations-thm} implies therefore that  every class in 
$\HH^1(k(P\rtimes E))$ 
is represented by a derivation with image contained in $J(k(P\rtimes E))$.
Equivalently, the map $\Der_1(k(P\rtimes E))\to \HH^1(k(P\rtimes E))$ is surjective. 
By Corollary \ref{rad-Cor} (iv), 
\begin{Statement}
we have that the Lie algebra $\HH^1(k(P\rtimes E))$ is solvable if 
and only if  $\HH^1(kP)^E \tenk kC$ is solvable.  
\end{Statement}
The formula \ref{KuennethLie1} 
implies that this is the case if and only if $\HH^1(kP)^E$ is solvable.
The rest follows from Theorem \ref{E-stable-derivations-thm}.
\end{proof}

\begin{Corollary} \label{normal-defect-cor}
Let $G$ be a finite group, $B$ a block with a non-trivial abelian defect group $P$ 
and an abelian inertial quotient $E$ such that there is a stable equivalence of
Morita type between $B$ and its Brauer correspondent. Assume that $k$ is
large enough. Suppose that $[P,E]=P$.
If the $\Fp E$-module $P/\Phi(P)$ is multiplicity free, then $\HH^1(B)$ is a
solvable Lie algebra. The converse holds if $p$ is odd.
\end{Corollary}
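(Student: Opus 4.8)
The plan is to deduce \Cref{normal-defect-cor} from \Cref{central-PE-thm} via the structure theory of blocks that are stably equivalent to their Brauer correspondents, in exactly the same spirit as the proof of \Cref{normal-defect-thm}. First I would invoke the preservation of $\HH^1$ as a Lie algebra under stable equivalences of Morita type \cite[Theorem 10.7]{KLZ}, so that $\HH^1(B)\cong \HH^1(b)$ where $b$ is the Brauer correspondent of $B$ in $N_G(P)$. Since $P$ is a normal defect group of $b$, and $k$ is large enough, \cite[Theorem A]{Kuenormal} (as used in the proof of \Cref{normal-defect-thm}) gives a Morita equivalence between $b$ and a twisted group algebra $k_\gamma(P\rtimes E)$ with $E$ the inertial quotient and $\gamma\in Z^2(E;k^\times)$ inflated to $P\rtimes E$; Morita equivalence also preserves $\HH^1$ as a Lie algebra, so $\HH^1(B)\cong \HH^1(k_\gamma(P\rtimes E))$.

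Next I would feed this into \Cref{twisted-PE-thm}, which is the twisted analogue: the hypotheses of \Cref{normal-defect-cor} are that $P$ is non-trivial abelian, $E$ is abelian (hence its image in $\Aut(P)$ is an abelian $p'$-subgroup), and $[P,E]=P$, which are precisely the hypotheses of \Cref{twisted-PE-thm} once one passes to the faithful quotient $E/C_E(P)$. One subtlety: the inertial quotient $E$ may not act faithfully on $P$, so strictly one should either pass to the image $\bar E=E/C_E(P)\leq \Aut(P)$ — noting that the twisted group algebra $k_\gamma(P\rtimes E)$ is then the group algebra of a central extension and \Cref{central-PE-thm} applies directly — or observe that \Cref{central-PE-thm} was designed exactly to cover this case. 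I would use \Cref{central-PE-thm}: its hypotheses $[P,E]=P$, $C_E(P)\leq \mathrm{Z}(E)$ (automatic here since $E$ is abelian) and $E/C_E(P)$ abelian (again automatic) are all satisfied, and it yields directly that every class in $\HH^1(k(P\rtimes E))$ is represented by a derivation with image in the Jacobson radical, that multiplicity-freeness of $P/\Phi(P)$ as an $\Fp E$-module implies solvability of $\HH^1(k(P\rtimes E))$, and that the converse holds for $p$ odd. Transporting along the Lie algebra isomorphism $\HH^1(B)\cong \HH^1(k(P\rtimes E))$ gives the claim; I should also note that the $\Fp E$-module structure on $P/\Phi(P)$ coming from the block coincides with the one coming from the $P\rtimes E$ picture, since the inertial quotient acts on $P$ via the fusion system of $B$, which is the fusion system of $P\rtimes E$.

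The main obstacle, such as it is, is bookkeeping rather than mathematics: making sure that the group denoted $E$ in the cited structure theorem \cite[Theorem A]{Kuenormal} — which produces a $p'$-subgroup of $\Aut(P)$ — is correctly matched with the inertial quotient in the statement of \Cref{normal-defect-cor}, and that ``$k$ large enough'' is used consistently, so that the $2$-cocycle $\gamma$ is genuinely inflated from $E$ and the hypotheses of \Cref{central-PE-thm} (or \Cref{twisted-PE-thm}) are literally met. Since the stable equivalence of Morita type between $B$ and its Brauer correspondent is assumed as a hypothesis here (rather than being deduced, e.g.\ from $B$ being an inertial block), there is nothing further to check on that front. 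I would therefore write the proof as a short three-line reduction.

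\begin{proof}
Let $b$ be the Brauer correspondent of $B$ in $N_G(P)$, so $b$ is a block with normal defect group $P$ and inertial quotient $E$. By hypothesis there is a stable equivalence of Morita type between $B$ and $b$, hence by \cite[Theorem 10.7]{KLZ} we have $\HH^1(B)\cong \HH^1(b)$ as Lie algebras. Since $k$ is large enough, by \cite[Theorem A]{Kuenormal} (cf. \cite[Theorem 6.14.1]{LiBookII}) the block $b$ is Morita equivalent to a twisted group algebra $k_\alpha(P\rtimes E')$, where $E'$ is the image of $E$ in $\Aut(P)$ and $\alpha\in Z^2(E';k^\times)$ is inflated to $P\rtimes E'$ via the canonical surjection; as $E$ is abelian, so is $E'$. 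Replacing the twisted group algebra by the group algebra of the corresponding central extension $\tilde E$ of $E'$, we may instead write $\HH^1(b)\cong \HH^1(k(P\rtimes \tilde E))$, where $\tilde E$ is a $p'$-group with $C_{\tilde E}(P)\leq \mathrm{Z}(\tilde E)$ and $\tilde E/C_{\tilde E}(P)\cong E'$ abelian. Morita equivalence preserves $\HH^1$ as a Lie algebra, so $\HH^1(B)\cong \HH^1(k(P\rtimes\tilde E))$. The action of $\tilde E$ on $P$ factors through $E'$, which is the inertial quotient acting via the fusion system of $B$; in particular the $\Fp\tilde E$-module $P/\Phi(P)$ is multiplicity free if and only if the $\Fp E$-module $P/\Phi(P)$ is multiplicity free. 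Since $[P,E]=P$ gives $[P,\tilde E]=P$, \Cref{central-PE-thm} applies to $k(P\rtimes\tilde E)$ and yields that $\HH^1(k(P\rtimes\tilde E))$ is solvable when $P/\Phi(P)$ is multiplicity free, with the converse holding for $p$ odd. Transporting along the Lie algebra isomorphism $\HH^1(B)\cong\HH^1(k(P\rtimes\tilde E))$ completes the proof.
\end{proof}
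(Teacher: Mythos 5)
Your reduction (KLZ stable-invariance of $\HH^1$ to pass to the Brauer correspondent, then K\"ulshammer's theorem \cite[Theorem A]{Kuenormal} to replace it by a twisted group algebra) is exactly the paper's route, and the bookkeeping remarks (the action factors through the image of $E$ in $\Aut(P)$, the module structures on $P/\Phi(P)$ agree, $[P,E]=P$ passes along) are fine. The problem is the step ``replacing the twisted group algebra by the group algebra of the corresponding central extension $\tilde E$, we may instead write $\HH^1(b)\cong \HH^1(k(P\rtimes\tilde E))$''. This identification is false: K\"ulshammer gives a Morita equivalence of $b$ with $k_\alpha(P\rtimes E')$, and $k_\alpha(P\rtimes E')$ is only one central-character component of $k(P\rtimes\tilde E)$. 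Writing $Z$ for the central $p'$-subgroup with $\tilde E/Z\cong E'$, one has $k(P\rtimes\tilde E)\cong \bigoplus_{\chi}k_{\alpha_\chi}(P\rtimes E')$ over the characters $\chi$ of $Z$ (with $k$ large enough there are $|Z|$ of them), so $\HH^1(k(P\rtimes\tilde E))$ is the direct sum of the $\HH^1$ of all these factors --- it strictly contains $\HH^1(b)$ as a direct Lie factor whenever $Z\neq 1$ (for instance the untwisted factor contributes $\HH^1(kP)^{E'}\neq 0$). The ``equivalence'' of Theorems \ref{twisted-PE-thm} and \ref{central-PE-thm} is an equivalence of the two statements, not an isomorphism of the relevant $\HH^1$'s.

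Concretely, the forward implication survives a patch (if $P/\Phi(P)$ is multiplicity free, Theorem \ref{central-PE-thm} makes the whole $\HH^1(k(P\rtimes\tilde E))$ solvable, hence so is its direct Lie factor $\HH^1(B)$), but the converse for $p$ odd does not follow from your argument: solvability of $\HH^1(B)$ only gives solvability of one direct summand of $\HH^1(k(P\rtimes\tilde E))$, whereas the converse in Theorem \ref{central-PE-thm} needs solvability of the full Lie algebra as input. The repair is the alternative you mention but do not execute: apply Theorem \ref{twisted-PE-thm} directly to $k_\alpha(P\rtimes E)$, which is what the paper does. Note also that the faithfulness worry is moot here: since $P$ is abelian, $P\leq C_G(P)$, so the inertial quotient $N_G(P,e)/PC_G(P)=N_G(P,e)/C_G(P)$ is already an abelian $p'$-subgroup of $\Aut(P)$, and the hypotheses of Theorem \ref{twisted-PE-thm} are met literally, with no passage to a central extension needed.
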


\begin{proof}
Since the Lie algebra $\HH^1(B)$ of a block $B$ of a finite group algebra $kG$ is
preserved under stable equivalences of Morita type (cf. \cite[Theorem 10.7]{KLZ}), 
we may assume that $P$ is normal in $G$.  Arguing as before,
by  \cite[Theorem A]{Kuenormal}  the block $B$ is Morita equivalent to 
$k_\alpha(P\rtimes E)$ for some $\alpha\in$
$Z^2(E,k^\times)$, inflated to $P\rtimes E$ via the canonical surjection 
$P\rtimes E\to E$. Thus $\HH^1(B)\cong$  $\HH^1(k_\alpha(P\rtimes E))$ as Lie 
algebras. The result follows from Theorem \ref{twisted-PE-thm}.
\end{proof}

We remark that Brou\'e's abelian defect group conjecture predicts that there
should always be a derived equivalence between a block with an abelian
defect group and its Brauer correspondent, so in particular there should always
be a stable  equivalence of Morita type in that situation.

\section{Examples} 

\begin{Example} \label{cyclic-example}
Let $k$ be a field of prime characteristic $p$.
Let $P$ be a non-trivial cyclic $p$-group and $E$ a $p'$-subgroup of $\Aut(P)$.
Then $E$ is cyclic of order dividing $p-1$ and acts freely on $P\setminus \{1\}$.
It follows for instance from \cite[Example 5.7]{LiRuHH1} as well as 
explicit calculations in \cite[Section 3]{Murphy23}
that $\HH^1(k(P\rtimes E))$ is solvable if and only if $E\neq 1$ or if $p=2$, and 
$\HH^1(k(P\rtimes E))$ is 
simple if and only if $|P|=p\geq 3$ and $E=1$.  It is easy to see that this follows also 
from combining the results of this paper. If $E\neq 1$, then $\HH^1(k(P\rtimes E))$ is
solvable by Theorem \ref{twisted-PE-thm}, or also by Theorem
\ref{frobenius-block-thm}. If $|P|=p\geq 3$, then $\HH^1(kP)$ is a simple Witt Lie
algebra, if $|P|=p=2$, then $\HH^1(kP)$ is a solvable  non-abelian $2$-dimensional 
Lie algebra, and if $P$ has order $p^a$ for some $a\geq 2$, then $\HH^1(kP)$ is not 
simple, by Lemma \ref{lem1}, in conjunction with the fact that $\HH^1(kP)$ has dimension
$|P|$. Note that this describes the Lie algebra structure of $\HH^1(B)$ for $B$ a
block of a finite group algebra $kG$ with a non-trivial cyclic defect group $P$ and
intertial quotient $E$, with $k$ sufficiently large. Indeed, there is a stable
equivalence of Morita type between $B$ and $k(P\rtimes E)$ (see e.g. 
\cite[Theorem 11.1.2]{LiBookII}), and hence $\HH^1(B)\cong$ $\HH^1(k(P\rtimes E))$
by \cite[Theorem 10.2]{KLZ}. One can show this, of course, also by making use
of the fact, due to Rickard \cite{Rick89a}, that $B$ and $k(P\rtimes E)$ are in 
fact derived equivalent.
\end{Example}

\begin{Example} \label{C2-example} 
Let $p$ be an odd prime and $P$ a finite  abelian $p$-group of rank
$r\geq 1$. Let $E$ be a group of order $2$, acting on $P$, with the nontrivial element of
$E$ acting by inversion. 
This action of $E$ on $P\setminus \{1\}$ is free, so $P\rtimes E$ is a Frobenius group, and 
hence we have a Lie algebra isomorphism 
$$\HH^1(\Fp(P\rtimes E))\cong \Der(\Fp P)^E.$$
Set $J=J(\Fp P)$.
The nontrivial element of $E$ acts as inversion on $P$, hence also on $P/\Phi(P)$, and 
therefore  as multiplication by $-1$ on  $J/J^2$ via the isomorphism from Lemma 
\ref{PhiP-Lemma}.  Since this action is in the center of $\End_{\Fp}(J/J^2)$, it follows 
that  $\End_{\Fp E}(J/J^2)=$ $\End_{\Fp}(J/J^2)$.
 Lemma \ref{lem4} implies that   the canonical Lie algebra homomorphism
 $$\HH^1(\Fp(P\rtimes E))\cong \Der(\Fp P)^E \to \End_{\Fp E}(J/J^2)\cong \gl_r(\Fp)$$
 is surjective.
 In particular, if $r\geq 2$, then the Lie algebra $\HH^1(\Fp(P\rtimes E))$ is not solvable,
 while for $r=1$ this is a solvable Lie algebra of dimension $\frac{|P|-1}{2}$. 
 \end{Example}

\begin{Example} \label{split-example}
Let $P$ be a finite elementary abelian $p$-group of rank $r\geq 1$, and let $E$ be a 
$p'$-subgroup of $\Aut(P)$. Suppose that $P$ is split semisimple as an 
$\Fp E$-module. Then $P$ has an $E$-stable decomposition 
$P=\prod_{i=1}^r \langle x_i \rangle$, where the $x_i$ all have order $p$;
in particular, $E$ is abelian.
If the subgroups $C_E(x_i)$, with $1\leq i\leq r$, are pairwise distinct proper subgroups 
of $E$, then the simple $\Fp E$-modules $\langle x_i\rangle$ are pairwise non-isomorphic 
non-trivial. In that case, setting $E_i=E/C_E(x_i)$, each $E_i$ is non-trivial, and the canonical surjections $E\to E_i$ induce an  isomorphism $E\cong \prod_{i=1}^r E_i$. Note that each 
$E_i$  can be identified with  a non-trivial cyclic  subgroup of $\Aut(\langle x_i\rangle)$, 
and hence $P\rtimes E = \prod_{i=1}^r \langle x_i\rangle \rtimes E_i$.
In particular, $P$ is multiplicity free as an $\Fp E$-module, and $P=$ $[P,E]$. Thus 
Theorem \ref{E-stable-derivations-thm} implies that $\HH^1(kP)^E$ is a solvable 
Lie algebra. In fact, 
since $P\rtimes E$ is a direct  product of Frobenius groups, one can, using the 
K\"unneth formula, even conclude that   $\HH^1(k(P\rtimes E))$ is solvable; see 
Example \ref{Kuenneth-example} below.
\end{Example}

\begin{Example} \label{CpCp-example}
Let $k$ be an algebraically closed  field of characteristic $p$,  let $G$ be a finite 
group and $B$ a block of $kG$ with an elementary abelian defect group
$P\cong $ $C_p\times C_p$ of order $p^2$ and an abelian  inertial quotient $E$. 
We identify $E$ with a subgroup of $\GL_2(\Fp)$ when convenient.

The Brauer correspondent block with defect group $P$ is source algebra
equivalent to $k_\alpha(P\rtimes E)$ for some $\alpha\in$ $Z^2(E;k^\times)$.
By a result of Rouquier \cite[6.3]{Rouq01} (see \cite[Theorem A.2]{Liperm} for a
proof for non-principal blocks), there is a stable equivalence of Morita type
between $B$ and $k_\alpha(P\rtimes E)$, and hence, by 
\cite[Theorem 10.2]{KLZ}, we have an isomorphism of Lie algebras 
$$\HH^1(B) \cong \HH^1(k_\alpha(P\rtimes E)).$$

If $E=1$, or equivalently, if the block $B$ is nilpotent, 
then $\HH^1(B)\cong$ $\HH^1(kP)$ is a simple Witt Lie algebra
(see \cite[Theorem 1.1]{LiRusimple} and the references in that paper for background).

If $E\neq 1$ and $E\leq Z(\GL_2(p))$, then $p$ is odd, we have $[P,E]=P$,
and  $P$ is a direct sum of  two isomorphic simple $\Fp E$-modules. Hence, in that 
case, $\HH^1(B)$ has a  quotient isomorphic to  $\gl_2(\Fp)$, 
so is non-solvable and non-simple,  where we use Lemma \ref{lem4}.

If $E\neq 1$ and $[P,E]<P$, then $P\rtimes E\cong$ $Q\times (R\rtimes E)$,
where $Q=C_P(E)$ and $R=[P,E]$ both have order $p$ and $E$ is cyclic of order
dividing $p-1$ (so $p$ is odd and $\alpha$ is trivial). 
Since $\HH^1(kQ)$ is a simple Witt Lie algebra and every class in 
$\HH^1(k(R\rtimes E))$ is represented by a derivation of $k(R\times E)$ with 
image contained in $J(k(R\times E))$, it follows from Lemma
\ref{KuennethRad2} that $\HH^1(B)$ is neither simple nor solvable in that case.

If $E$ is a non-trivial abelian but not a central subgroup of $\GL_2(\Fp)$, such
that $[P,E]=P$, then $P$ is either simple as an $\Fp E$-module  or the sum of two 
non-isomorphic simple $\Fp E$-modules, and hence $\HH^1(B)$ is solvable in that 
case, by Theorem \ref{twisted-PE-thm}. For instance, if  $E=$ $C_2\times C_2$
and $\alpha$ is non-trivial, then the Brauer correspondent of $B$ is Morita
equivalent to the quantum complete intersection desctibed in 
Example \ref{qi-example} below. Another instance arises as follows.
By identifying $P$ with the additive group of $\F_{p^2}$ one sees that
the cyclic group $\F_{p^2}^\times$ of order $p^2-1$ acts regularly on 
$P\setminus \{1\}$. Let $m\geq 3$ be a divisor of $p+1$ coprime to $p-1$,
and let $C_m$ a cyclic group of order $m$ acting freely on the non-identity 
elements of $P$. Then $[P,E]=P$ and  $P$ is a simple $\Fp E$-module, hence  
$\HH^1(k(P\rtimes E))\cong$ $\HH^1(kP)^E$
is a solvable Lie algebra by Theorem \ref{frobenius-2-thm}. 
\end{Example}

\begin{Example} \label{Kuenneth-example}
Theorem \ref{frobenius-2-thm} can be extended to direct products of Frobenius groups, 
using the K\"unneth formula. For $1\leq  i \leq n$ let $P_i$ be a non-trivial finite abelian 
$p$-group and let $E_i$ be a $p'$-subgroup of $\Aut(P_i)$ acting freely on 
$P_i\setminus \{1\}$. By Proposition \ref{twisted-PE-prop} we have
$\HH^1(k(P_i\rtimes E_i))\cong$ $\HH^1(kP_i)^{E_i}$, so by Lemma \ref{Pe-cor-1} 
the canonical  map $\Der_1(k(P_i\rtimes E_i))\to$ $\HH^1(k(P_i\rtimes E_i))$ 
are surjective.

Set $P=\prod_{i=1}^n P_i$ and $E=\prod_{i=1}^n E_i$.
Proposition \ref{Kuenneth-prop}, applied repeatedly, implies that the map 
$\Der_1(k(P\rtimes E))\to$ $\HH^1(k(P\rtimes E))$ is surjective. 
Suppose in addition that $P/\Phi(P)$ is multiplicity free as an $\Fp E$-module, or 
equivalently, that each $P_i/\Phi(P_i)$ is multiplicity free as an $\Fp E_i$-module, 
for $1\leq i\leq n$.  Lemma  \ref{lem4} implies that 
$$[\Der_1(k(P\rtimes E)), \Der_1(k(P\rtimes E))]\subseteq 
\Der_2(k(P\rtimes E)) \ + \IDer(k(P\rtimes E)),$$ 
and in particular, that $\HH^1(k(P\rtimes E))$ is a solvable Lie algebra.
In the case where $E$ is abelian, this follows also from Theorem \ref{twisted-PE-thm}. 
\end{Example}

\begin{Example} \label{qi-example}
Suppose that $p$ is an odd prime.  Consider the algebra 
$$A= k\langle x,y|  x^p=y^p=0,\ xy+yx=0\rangle .$$
This algebra, which has dimension $p^2$,  arises as basic algebra of a the non-principal 
block  of  $k(C_p\times C_p)\rtimes Q_8$, with defect group $C_p\times C_p$ and 
inertial  quotient $C_2\times C_2$. It is shown in \cite[Theorem 1.1]{BKL} amongst 
other  statements on the structure of the Lie algebra $\HH^1(A)$ that $\HH^1(A)$ is
a solvable Lie algebra. Theorem \ref{twisted-PE-thm} provides an alternative proof 
of this fact.
\end{Example}

\begin{Example} \label{KoKu-example}
Let $k$ be an algebraically closed field of prime characteristic $p$, let $G$ be
a finite group, $N$ a normal subgroup such that $G/N$ is a $p$-group, and
let $b$ be a $G$-stable block of $kN$. Set $C=kNb$. Then $B=kGb$ is a block 
of $kG$. Let $P$ be a defect group of $B$, and set $Q=P\cap N$. Suppose that $P$
is abelian and that $Q$ has a complement $R$ in $P$. If $p$ is odd or if $R$ has
rank at least $2$, then $\HH^1(B)$ is not solvable. Indeed, by a Theorem of
Koshitani and K\"ulshammer in \cite{KoKu} (see  \cite[Theorem 10.4.2]{LiBookII}
for an expository account), we have a $k$-algebra isomorphism $B\cong$ 
$kR\tenk C$, so by the K\"unneth formula \ref{Kuenneth}, $\HH^1(B)$ has
a Lie subalgebra isomorphic to $\HH^1(kR)$, which in turn has a Lie algebra
quotient $\HH^1(kR/\Phi(R))$. If $p$ is odd or if $R$ has rank at least $2$, then
$\HH^1(kR/\Phi(R))$ is a simple Witt Lie algebra.
\end{Example}

\bigskip\noindent
{\bf Acknowledgements.} 
The authors  acknowledge support from
EPSRC grant EP/X035328/1.

\end{document}